\def\<#1,#2>{\langle\,#1,\,#2\,\rangle}
\def\kts{\mathcal{K}\mathcal{T}}
\def\akts{\mathcal{A}\mathcal{K}\mathcal{T}}
\def\hkts{\mathcal{H}\mathcal{K}\mathcal{T}}
\def\inf{\mathrm{inf}}
\def\dim{\mathrm{dim}}
\def\tm{\mathrm{TM}}
\def\tc{\mathrm{TM}^{\mathbb{C}}}
\def\tcip{{\mathrm{TM}^{(1,0)}}}
\def\tcim{{\mathrm{TM}^{(0,1)}}}
\def\sp{\mathrm{Spin}(2m)}
\def\lodd{\lambda_0^{odd}}
\def\leven{\lambda_0^{even}}
\def\s#1{\mathrm{\Sigma}_{#1} \mathrm{M}}
\def\sigm{{\mathrm{\Sigma M}}}
\def\of{\{e_j\}_{j=1,\ldots,n}}
\def\qed{\ensuremath{\hfill\Box}}
\newcommand{\ric}{\mathrm{Ric}}
\newtheorem{Lemma}{Lemma}[section]
\newtheorem{Proposition}[Lemma]{Proposition}
\newtheorem{Theorem}[Lemma]{Theorem}
\newtheorem{Corollary}[Lemma]{Corollary}
\newtheorem{Definition}[Lemma]{Definition}
\newtheorem{Remark}[Lemma]{Remark}
\newtheorem{Example}[Lemma]{Example}
\numberwithin{equation}{section}
\title{K\"ahlerian Twistor Spinors}
\author{Mihaela Pilca}
\thanks{The author thanks Graduiertenkolleg 1269 "Global Structures in Geometry and Analysis" for financial support and the Centre de Math{\'e}matiques ``Laurent Schwartz" de l'{\'E}cole Polytechnique for hospitality during part of the preparation of this work, within the French-German cooperation project Procope no. 17825PG}
\address{Mihaela Pilca \\ Mathematisches Institut\\ Universit\"at zu K\"oln\\ Weyertal 86-90 D-50931 K\"oln\\ Germany}
\email{mpilca@mi.uni-koeln.de}
\begin{document}

\begin{abstract}
On a K\"ahler spin manifold, K\"ahlerian twistor spinors are a natural analogue of twistor spinors on Riemannian spin manifolds. They are defined as 
sections in the kernel of a first order differential operator adapted to the K\"ahler structure, called K\"ahlerian twistor (Penrose) operator. We study K\"ahlerian twistor spinors and give a complete description of compact K{\"a}hler manifolds of constant scalar curvature admitting such spinors. As in the Riemannian case, the existence of K\"ahlerian twistor spinors is related to the lower bound of the spectrum of the Dirac operator.

\vspace{0.1cm}

\noindent
2000 {\it Mathematics Subject Classification}. Primary 53C25, 53C55, 58J50.\\
keywords: Dirac operator, lower bound, K\"ahlerian twistor operator, K\"ahlerian twistor spinor. 
\end{abstract}

\maketitle

\tableofcontents

\section*{Introduction}

The purpose of this paper is to study the analogue of twistor spinors on K\"ahler spin manifolds and to describe the manifolds that admit such spinors. 

On a Riemannian spin manifold $(M,g)$, a special class of spinors exists, the so-called {\it twistor spinors}. They are defined as sections in the kernel of a natural first order operator, the {\it twistor operator}, which is given by the projection of the covariant derivative onto the Cartan summand of the tensor product $\mathrm{T^*M}\otimes\sigm$ (where $\mathrm{T^*M}$ is the cotangent bundle and $\sigm$ is the spinor bundle). More precisely, a twistor spinor $\varphi\in\Gamma(\sigm)$ is a solution of the equation
\begin{equation*}
\nabla_X\varphi=-\frac{1}{n}X\cdot D\varphi,
\end{equation*}
where $D$ is the Dirac operator. Twistor spinors are conformally invariant. In \cite{fr2}, Th.~Friedrich found their conformal relation to {\it Killing spinors}, which build up an important special class of twistor spinors. Killing spinors also play an important role in physics and are closely related to the spectrum of the Dirac operator as shown below.

The problem of finding optimal lower bounds for the eigenvalues of the Dirac operator on compact manifolds was first considered in 1980 by Th.~Friedrich, \cite{fr1}. He proved that on a compact spin manifold $(M^n,g)$ of positive scalar curvature $S$, the first eigenvalue $\lambda$ of $D$ satisfies
\[\lambda^2\geq\frac{n}{4(n-1)}\underset{M}{\inf} S.\]

The limiting case of this equality is characterized by the existence of real Killing spinors or equivalently by constant scalar curvature and the existence of twistor spinors. The general geometric description of simply connected manifolds carrying Killing spinors was obtained in 1993 by Ch.~B\"ar, \cite{baer}. 

As shown by O.~Hijazi in 1984, \cite{hijth}, K\"ahler spin manifolds cannot bear any nontrivial Killing spinors. Moreover, in 1992 K.-D.~Kirchberg proved, \cite{kirch1}, that if the scalar curvature is nonzero, then nontrivial twistor spinors cannot exist. It is thus natural to ask for an analogue class of spinors on K\"ahler manifolds, defined by a twistorial equation adapted to the K\"ahler structure. These spinors are called {\it K\"ahlerian twistor spinors} and are defined in the following way. On a K\"ahler spin manifold $(M^{2m},g,J)$, the spinor bundle $\sigm$ splits into $\mathrm{U}(m)$-irreducible subbundles: $\sigm=\oplus_{r=0}^{m}\s{r}$, where $\s{r}$ is the eigenbundle of the Clifford multiplication with the K\"ahler form for the eigenvalue $i(2r-m)$. For each $0\leq r\leq m$,  a K\"ahlerian twistor operator is defined by the projection of the covariant derivative onto the Cartan summand of the tensor product $\mathrm{T^*M}\otimes\s{r}$. The sections in the kernel of this first order differential operator are the K\"ahlerian twistor spinors. Explicitly, they satisfy the equations

\begin{equation*}
\begin{cases} 
\nabla_{X^{+}}\varphi=-\frac{1}{2(m-r+1)}X^{+}\cdot D^{-}\varphi,\\
\nabla_{X^{-}}\varphi=-\frac{1}{2(r+1)}X^{-}\cdot D^{+}\varphi,
\end{cases}
\end{equation*}
where $D^+$ and $D^-$ are defined by \eqref{defd+d-}. As in the Riemannian case, K\"ahlerian twistor spinors are closely related to the spectrum of the Dirac operator as shown below.

In 1986, Kirchberg improved Friedrich's inequality for K\"ahler manifolds. He showed, \cite{kirch86}, that every eigenvalue $\lambda$ of the Dirac operator on a compact K\"ahler manifold $(M^{2m},g,J)$ of positive scalar curvature $S$ satisfies 

\[\lambda^2\geq\frac{m+1}{4m}\underset{M}{\inf} S, \quad \text{if $m$ is odd,}\]

and 

\[\lambda^2\geq\frac{m}{4(m-1)}\underset{M}{\inf} S,  \quad \text{if $m$ is even.}\]

The manifolds which satisfy the limiting case of these inequalities are characterized by the existence of {\it K\"ahlerian Killing spinors} (see \eqref{ecodd}) for $m$ odd and by a spinor satisfying a similar equation (see \eqref{eceven}) for $m$ even. The limiting manifolds were geometrically described by A.~Moroianu in 1994 for odd complex dimension, respectively in 1999 for even complex dimension, \cite{am_odd}, \cite{am_even}. 

We note that the spinors characterizing the limiting manifolds of Kirchberg's inequa\-li\-ties, \emph{i.e.} those satisfying the equations \eqref{ecodd} and \eqref{eceven}, are in particular K\"ahlerian twistor spinors in $\s{\frac{m\pm1}{2}}$, respectively $\s{\frac{m}{2}\pm 1}$. It is thus natural to study K\"ahlerian twistor spinors as a generalization of these two important special cases.

The first eigenvalue $\lambda^2$ of the square of the Dirac operator restricted to $\s{r}$ on a compact K\"ahler manifold $(M^{2m},g,J)$ of positive scalar curvature $S$ satisfies the following inequality for $0\leq r\leq \frac{m}{2}$ (for $\frac{m}{2}< r\leq m$ there is a similar inequality, see \eqref{valmin2}):

\[\lambda^2\geq\frac{2(r+1)}{4(2r+1)}\underset{M}{\inf} S.\]

The limiting manifolds are characterized by constant scalar curvature and the existence of nontrivial K\"ahlerian twistor spinors in $\s{r}$ (see Proposition~\ref{eigentyper}). In fact these inequalities on each $\s{r}$ provide a proof of Kirchberg's inequalities (\emph{cf.} \cite{kirch2}). 

The main result of this paper is the geometric description of the limiting manifolds of these inequalities on $\s{r}$ ($0<r<m$), \emph{i.e.} of spin K\"ahler manifolds of constant scalar curvature carrying nontrivial K\"ahlerian twistor spinors in $\s{r}$. More precisely, we obtain the following (see Theorems~\ref{classke} and~\ref{class}):

\begin{Theorem}
Let $(M^{2m},g,J)$ be a compact simply connected spin K\"ahler manifold of constant scalar curvature admitting nontrivial non-parallel K\"ahlerian twistor spinors in $\s{r}$ for an $r$ with $0<r<m$. Then $M$ is the product of a Ricci-flat manifold $M_1$ and an irreducible K\"ahler-Einstein manifold $M_2$, which is a limiting manifold for Kirchberg's inequality in odd complex dimensions and thus is either the complex projective space in complex dimension $4k+1$ or, in complex dimension $4k+3$, a twistor space over a quaternionic K\"ahler manifold of positive scalar curvature. More precisely, there exist anti-holomorphic (holomorphic) K\"ahlerian twistor spinors in at most one such $\s{r}$ with $r<\frac{m}{2}$ ($r>\frac{m}{2}$) and they are of the form:
\begin{equation*}
\psi=\xi_0\otimes\varphi_r \quad (\psi=\xi_{2r-m-1}\otimes\varphi_{m-r+1}),
\end{equation*}
where $\xi_0\in\Gamma(\mathrm{\Sigma}_0 \mathrm{M}_1)$ ($\xi_{2r-m-1}\in\Gamma(\mathrm{\Sigma}_{2r-m-1} \mathrm{M}_1)$) is a parallel spinor and  $\varphi_r\in\Gamma(\mathrm{\Sigma}_{r} \mathrm{M}_2)$ ($\varphi_{m-r+1}\in\Gamma(\mathrm{\Sigma}_{m-r+1} \mathrm{M}_2)$) is an anti-holomorphic (holomorphic) K\"ahlerian twistor spinor. In particular, the complex dimension of the K\"ahler-Einstein manifold $M_2$ is $2r+1$ (resp. $2(m-r)+1$).
\end{Theorem}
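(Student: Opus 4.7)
The plan is to derive integrability conditions from the K\"ahlerian twistor equations on a spinor $\psi\in\Gamma(\s{r})$, use them to constrain the Ricci tensor to a two-eigenvalue structure, invoke the de Rham decomposition theorem, and finally reduce to Moroianu's classification of limiting manifolds on the irreducible factor. First, I apply the spinorial curvature identity $R(X,Y)\psi=\nabla_X\nabla_Y\psi-\nabla_Y\nabla_X\psi-\nabla_{[X,Y]}\psi$, replace the covariant derivatives by the right-hand sides of the K\"ahlerian twistor equations for $X^{\pm}$ and $Y^{\pm}$, and contract one slot using the standard formula $\sum_j e_j\cdot R(e_j,X)\psi=\tfrac12\ric(X)\cdot\psi$. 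Decomposing both sides according to $\sigm=\oplus_k\s{k}$ and using that $D^{+}$ raises the Clifford-type by one while $D^{-}$ lowers it by one, one obtains explicit formulas for $\ric(X^{\pm})\cdot\psi$ in terms of $D^{\pm}\psi$, $D^{\mp}D^{\pm}\psi$ and Clifford products with $X^{\pm}$.

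These identities force the Ricci endomorphism to have at most two eigenvalues on $TM$, namely $0$ and a nonnegative $\alpha$ proportional to the scalar curvature $S$; both eigendistributions are $J$-invariant, since $[\ric,J]=0$ on a K\"ahler manifold, and the hypothesis of constant scalar curvature together with the second Bianchi identity shows that $\alpha$ is constant and that both distributions are parallel. By the de Rham decomposition theorem in the K\"ahler category, the simply connected manifold $M$ splits as a Riemannian product $M_1\times M_2$ of K\"ahler manifolds, where $M_1$ is Ricci-flat and $M_2$ is K\"ahler-Einstein with positive scalar curvature. Via the isomorphism $\sigm\cong\sigm_1\otimes\sigm_2$ and the splitting of the K\"ahler form $\o=\o_1+\o_2$, the condition $\psi\in\Gamma(\s{r})$ refines into $\psi=\sum_{k+l=r}\psi_{k,l}$ with $\psi_{k,l}\in\Gamma(\mathrm{\Sigma}_k\mathrm{M}_1\otimes\mathrm{\Sigma}_l\mathrm{M}_2)$.

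The K\"ahlerian twistor equation applied in the $M_1$-directions degenerates (because $M_1$ is Ricci-flat and $D^{\pm}\psi$ effectively acts only in the $M_2$-directions) into $\nabla_{X_1}\psi=0$ for every $X_1\in TM_1$; combined with the product structure of the spinor bundle, this collapses the sum to a single term $\psi=\xi\otimes\varphi$ with $\xi$ a parallel spinor on $M_1$ and $\varphi$ a K\"ahlerian twistor spinor on $M_2$. Tracking the $\s{r}$-index through $\sigm_1\otimes\sigm_2$, together with the extremal position of $\varphi$ on $M_2$ forced by the restricted twistor equation (i.e. $D^{+}\varphi=0$ in the anti-holomorphic case, $D^{-}\varphi=0$ in the holomorphic case), shows that $\varphi$ realizes the limiting case of Kirchberg's inequality in odd complex dimension and that $\dim_{\mathbb C}M_2=2r+1$ (resp. $2(m-r)+1$). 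Moroianu's classification in \cite{am_odd} then identifies $M_2$ as either the complex projective space of complex dimension $4k+1$ or a twistor space over a positive quaternionic K\"ahler manifold in complex dimension $4k+3$. The main technical obstacle will be the first step: extracting cleanly the two-eigenvalue structure of $\ric$ and the constancy of $\alpha$ from the integrability conditions, which requires careful bookkeeping across the $\s{k}$-components and a case split depending on whether $r<m/2$ or $r>m/2$.
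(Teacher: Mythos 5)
Your overall outline (constrain $\ric$, split by de~Rham, analyze the product, quote Moroianu's odd-dimensional classification) matches the paper's strategy, but the two hardest steps are asserted rather than proved, and the routes you indicate for them would not work. First, the curvature/contraction identities you describe only give relations of the type $\ric(X^\pm)\cdot\psi=\dots$, i.e.\ constraints on the action of $\ric$ on the single spinor $\psi$ (and on $D^\pm\psi$); since Clifford multiplication evaluated on a fixed spinor is far from injective, such relations do \emph{not} directly force $\ric$ to have only the eigenvalues $0$ and $\alpha$ as an endomorphism of $TM$, let alone with the right multiplicities. In the paper this is the technical core: one first needs Proposition~\ref{allsemi} (a global argument using compactness, the Weitzenb\"ock formula and the eigenvalue estimate of Proposition~\ref{eigentyper}) to know the twistor spinor is anti-holomorphic or holomorphic — your proposal never establishes this specialization, and the clean identities such as \eqref{lric} require it — and then Moroianu's inductive scheme with the forms $\rho_s$ and the statements $(a_s)$--$(f_s)$, which yields $\tr(\ric^s)=2(2r+1)K^s$ for all $s$ and, via Newton's relations, the two-eigenvalue structure with multiplicities $2(2r+1)$ and $2(m-2r-1)$ (Theorem~\ref{2eigenval}). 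Your "careful bookkeeping across the $\s{k}$-components" is precisely where this induction is hiding, and it is not a routine consequence of one contraction of the curvature.

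Second, your claim that constant scalar curvature plus the second Bianchi identity makes $\alpha$ constant and \emph{both eigendistributions parallel} is unfounded: the contracted Bianchi identity only gives $\delta\ric=0$, and constancy of the two eigenvalues of a K\"ahler--Ricci tensor does not by itself imply that $\ric$ is parallel. The paper gets the splitting from the theorem of Apostolov--Dr\u aghici--Moroianu \cite{adm} (Theorem~\ref{split}), a genuinely nontrivial global result for compact K\"ahler manifolds with two distinct constant non-negative Ricci eigenvalues; without it (or an equivalent argument) the de~Rham decomposition step collapses. Similarly, in the product analysis the vanishing of the mixed components $\psi_{k,l}$ and of the cross derivatives $D_1^\pm$, $D_2^\mp$ is obtained in Theorem~\ref{product} by integrating identities like \eqref{d1d2} over the compact manifold, not by a pointwise "degeneration" of the twistor equation in the $M_1$-directions; and the identification of $\varphi$ on the K\"ahler--Einstein factor as a K\"ahlerian Killing spinor (hence the limiting case of \eqref{kirchodd} and $\dim_{\mathbb C}M_2=2r+1$) requires the eigenvalue comparison of Proposition~\ref{ke}, which you only gesture at. (Minor point: you also interchange the conventions — anti-holomorphic means $D^-\varphi=0$ and holomorphic means $D^+\varphi=0$ in Definition~\ref{defsemi}.)
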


For $r=\frac{m}{2}\pm 1$ the complex dimension of the Ricci-flat factor is $1$ and we reobtain the limiting manifolds of Kirchberg's inequalities for $m$ even. Thus, our result may be considered on the one hand as a generalization of A.~Moroianu's description of limiting K\"ahler manifolds in even complex dimension, while on the other hand, we use his classification in the odd-dimensional case. In particular, our result answers a question raised by K.-D.~Kirchberg in \cite{kirch2} and, in a certain sense, completes the picture in the K\"ahler case.

We note that a slightly different notion of 'K\"ahlerian twistor spinors' has already been introduced by K.-D.~Kirchberg, \cite{kirch2}, and O.~Hijazi, \cite{hij}. This is a special class of K\"ahlerian twistor spinors (defined as sections in the kernel of the K\"ahlerian twistor operator), which we call {\it special K\"ahlerian twistor spinors} and are characterized by a further condition, \emph{i.e.} to be in the kernel of $D^-$ or $D^+$ (see Remark~\ref{relation}).

The paper is organized as follows. After a short preliminary section, where we introduce the notation and some results of spin geometry on K\"ahler manifolds, we define in \S ~\ref{sectkts} the main objects, the K\"ahlerian twistor spinors, and study some particular cases. In \S ~\ref{sectktc} we construct a connection called {\it K\"ahlerian twistor connection}, such that K\"ahlerian twistor spinors are in one-to-one correspondence with parallel sections of this connection. Furthermore, in \S ~\ref{sectcurv} we compute the curvature of the K\"ahlerian twistor connection, which allows us to derive some useful formulas. These are the starting points for the main part of the paper, \S ~\ref{classif}, where we prove the above mentioned result (Theorem~\ref{class}). We first show that on a compact K\"ahler spin manifold of constant scalar curvature all K\"ahlerian twistor spinors are special K\"ahlerian twistor spinors; then we show that the existence of such a nontrivial spinor imposes strong restrictions on the Ricci tensor, namely it only has two constant eigenvalues. This has already been proven by A.~Moroianu, \cite{am_even1}, in the special case of limiting manifolds of Kirchberg's inequality for even complex dimension and we notice that his method works for any bundle $\s{r}$. By a result of V.~Apostolov, T.~Dr\u aghici and A.~Moroianu, \cite{adm}, we derive that the Ricci tensor must be parallel. Thus, assuming that the manifold is simply connected, it must be, by de~Rham's decomposition theorem, a product of irreducible K\"ahler-Einstein manifolds. Analyzing K\"ahlerian twistor spinors on a product (Theorem~\ref{product}), it turns out that one of the factors is Ricci-flat and the other is K\"ahler-Einstein admitting itself special K\"ahlerian twistor spinors. The problem is thus reduced to the study of K\"ahler-Einstein manifolds, where we show that the only nontrivi\-al non-extremal K\"ahlerian twistor spinors are the K\"ahlerian Killing spinors (Proposition~\ref{ke}). In the last section we consider a larger class of manifolds, the weakly Bochner flat manifolds, and show that the existence of K\"ahlerian twistor spinors already implies constancy of the scalar curvature. 

\smallskip

\noindent \textsc{Acknowledgments.} This paper is a part of my Ph.D. thesis. I thank Uwe Semmelmann, my supervisor, for his encouragement. I am grateful to Andrei Moroianu for many valuable discussions and suggestions. I also thank K.-D. Kirchberg for reading the preliminary version and suggesting me a better terminology for the special classes of K\"ahlerian twistor spinors.

\vspace{0.1 cm}

\section{Preliminaries: Spin Geometry on K\"ahler Manifolds}\label{sectprelim}

\subsection{The Decomposition of the Spinor Bundle on K\"ahler Manifolds}

Let $(M,g,J)$ be a K\"ahler manifold of real dimension $n=2m$ with Riemannian metric $g$, complex structure $J$ and K\"ahler form $\Omega=g(J\cdot, \cdot)$. The tangent and cotangent bundle are identified using the metric $g$. Where we do not write sums, we implicitly use the Einstein summation convention over repeated indices. $\of$ always denotes a local orthonormal frame. The complexified tangent bundle splits into the $\pm i$-eigenbundles of the complex structure: $\tc=\tcip\oplus\tcim$ and we denote the components of a vector field $X$ with respect to this splitting as follows:
\[X^+=\frac{1}{2}(X-iJX)\in\Gamma(\tcip),\quad X^-=\frac{1}{2}(X+iJX)\in\Gamma(\tcim).\]

We now assume on $M$ the existence of a spin structure. In the case of K\"ahler manifolds this is equivalent to the existence of a square root of the canonical bundle $K=\Lambda^{(m,0)}M$, \emph{i.e.} a holomorphic line bundle $L$ such that $K\cong L\otimes L$ (see \cite{hitchin}). 

Let $P_{\mathrm{Spin}}$ be the $\sp$-principal bundle of the spin structure and denote by $\sigm$ the associated spinor bundle: $\sigm=P_{\mathrm{Spin}}\times_{\sp}\Sigma$, where $\Sigma$ is the $2^m$-dimensional complex spin representation of $\sp$. $\sigm$ is a complex Hermitian vector bundle and its sections are called {\it spinor fields} (or shortly {\it spinors}).

The Clifford contraction $c:\mathrm{T^*M}\otimes\sigm \to \sigm$ is defined on each fiber by the Clifford multiplication on the spinor representation $\Sigma$. On decomposable elements we have $c(X\otimes \varphi)=X\cdot\varphi$. It is extended to a multiplication with $k$-forms. Each $k$-form $\alpha$ acts as an endomorphism of the spinor bundle, which is locally given by
\[\alpha\cdot\varphi=\sum_{1\leq i_1 < i_2 < \dots < i_k\leq 2m} \alpha(e_{i_1}, \dots ,e_{i_k})e_{i_1}\cdot \ldots \cdot e_{i_k}\cdot\varphi.\] 

Consider now the Clifford multiplication with the complex volume form (with the orientation given by the complex structure): $\omega^\mathbb{C}=i^{m}\prod_{i=1}^{m}e_i\cdot Je_i$. Since the dimension is even, $\omega^\mathbb{C}$ has the eigenvalues of $+1$ and $-1$ and the corresponding eigenspaces
\begin{equation}\label{dec+-}
\Sigma=\Sigma^+\oplus \Sigma^-
\end{equation}
are inequivalent complex irreducible representations of $\sp$.

As an endomorphism of the spinor bundle, the K\"ahler form is given locally by
\begin{equation}\label{omegacliff}
\Omega=\frac{1}{2}\sum_{j=1}^{n}e_j\cdot J e_j.
\end{equation}

By a straightforward computation it follows:
\begin{Lemma}\label{decompsp}
Under the action of the K\"ahler form $\Omega$, the spinor bundle splits into the orthogonal sum of holomorphic subbundles
\begin{equation}\label{decomp}
\sigm=\oplus_{r=0}^{m}\s{r},
\end{equation}
where each $\s{r}$ is an eigenbundle of $\Omega$ corresponding to the eigenvalue $i\mu_r=i(2r-m)$ and  $\mathrm{rank}_{\mathbb{C}}(\s{r})=\binom{m}{r}$.
\end{Lemma}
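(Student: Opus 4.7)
The plan is to perform the eigenvalue analysis of Clifford multiplication by $\Omega$ pointwise, via the representation theory of the even complex Clifford algebra, and then to globalize using the fact that $\Omega$ is parallel. Concretely, I would fix a local orthonormal frame $\{e_j,Je_j\}_{j=1,\ldots,m}$ adapted to the K\"ahler structure and introduce the isotropic complex combinations
\[
Z_j := \tfrac{1}{\sqrt{2}}(e_j - iJe_j), \qquad \bar{Z}_j := \tfrac{1}{\sqrt{2}}(e_j + iJe_j),
\]
which satisfy $Z_jZ_k + Z_kZ_j = 0$, $\bar{Z}_j\bar{Z}_k + \bar{Z}_k\bar{Z}_j = 0$ and $Z_j\bar{Z}_k + \bar{Z}_kZ_j = -2\delta_{jk}$. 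Substituting $e_j = \tfrac{1}{\sqrt{2}}(Z_j+\bar{Z}_j)$ and $Je_j = \tfrac{i}{\sqrt{2}}(Z_j-\bar{Z}_j)$ into \eqref{omegacliff}, and using that the sum over the full $2m$ indices is twice the sum over an adapted $m$-tuple (since $(Je_j)\cdot J(Je_j) = -Je_j\cdot e_j = e_j\cdot Je_j$), a direct Clifford computation yields the key identity
\[
\Omega \;=\; im\,\id \;+\; i\sum_{j=1}^{m} \bar{Z}_j Z_j \qquad\text{on } \sigm.
\]

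Next, at each point there exists (locally) a unit spinor $\varphi_0$ annihilated by every $Z_j$; this is the standard fact that a maximally isotropic subspace of $\tc$ determines a vacuum in the spin representation, and the remaining spinors are obtained as $\bar{Z}_{k_1}\cdots \bar{Z}_{k_l}\varphi_0$ with $k_1 < \cdots < k_l$ in $\{1,\ldots,m\}$. Repeatedly applying $Z_j\bar{Z}_k + \bar{Z}_kZ_j = -2\delta_{jk}$ together with $Z_j\varphi_0 = 0$ yields
\[
\Big(\sum_{j=1}^{m}\bar{Z}_jZ_j\Big)\bar{Z}_{k_1}\cdots\bar{Z}_{k_l}\varphi_0 \;=\; -2l\,\bar{Z}_{k_1}\cdots\bar{Z}_{k_l}\varphi_0,
\]
so such a spinor is an eigenvector of $\Omega$ with eigenvalue $i(m-2l)$. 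Setting $r := m-l$ this reads $i(2r-m)$, and the number of basis spinors for fixed $l$ is $\binom{m}{l}=\binom{m}{r}$. Since $\sum_{r=0}^{m}\binom{m}{r}=2^m$ exhausts the rank of $\sigm$, we obtain the pointwise eigenspace decomposition together with the rank formula.

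Orthogonality is then automatic: Clifford multiplication by the real $2$-form $\Omega$ is skew-Hermitian on $\sigm$, so eigenspaces for distinct (purely imaginary) eigenvalues are pairwise orthogonal. The global well-definedness of each $\s{r}$ and its holomorphicity both follow from $\nabla\Omega = 0$: a parallel endomorphism preserves its spectral decomposition under the Levi-Civita (hence the induced spinorial) connection, so each eigenbundle is in particular $\bar\partial$-stable and inherits a holomorphic subbundle structure from the holomorphic structure on $\sigm$ (the latter coming from the identification $\sigm \cong \L^{0,*}M\otimes L$ for a square root $L$ of the canonical bundle). The only delicate point, and the one to track carefully, is the Clifford-algebra computation producing the "number operator" form of $\Omega$ displayed above — in particular keeping control of the constant $m$ that arises from the sum over all $2m$ indices in \eqref{omegacliff} as opposed to a single adapted $m$-tuple.
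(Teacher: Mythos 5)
Your argument is correct and is exactly the standard ``number operator'' computation that the paper leaves implicit (it states the lemma follows ``by a straightforward computation''): the identity $\Omega=im+i\sum_j\bar Z_jZ_j$, the Fock-space basis $\bar Z_{k_1}\cdots\bar Z_{k_l}\varphi_0$, skew-Hermitian\-ity of $\Omega$ for orthogonality, and parallelism of $\Omega$ plus the Hitchin identification $\sigm\cong L\otimes\Lambda^{0,*}$ for the global holomorphic splitting. Your conventions are also consistent with the rest of the paper (your $r=m-l$ labelling reproduces $X^{-}\cdot\s{r}\subset\s{r-1}$ from Lemma~\ref{cr} and the contraction $e_i^+\cdot e_i^-=-2r$ in \eqref{contr1}), so nothing needs to change.
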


This decomposition corresponds to the one for $(0,*)$-forms in $(0,r)$-forms on $M$, if we consider the so-called {\it Hitchin representation} (\cite{hitchin}) of the spin bundle of any almost Hermitian manifold: $\sigm\cong L\otimes\Lambda^{0,*}$, where $L$ is the square root of the canonical bundle $K$ of $M$ determined by the spin structure.

Comparing the decomposition \eqref{dec+-} with the finer one \eqref{decomp} we have
\[\Sigma^+ M=\underset{\begin{subarray}{c}
       0\leq r\leq m\\ \text{$r$ even}
      \end{subarray}}{\oplus} \s{r}, \quad  \Sigma^- M=\underset{\begin{subarray}{c}
       0\leq r\leq m\\ \text{$r$ odd}
      \end{subarray}}{\oplus} \s{r}.\]
On the spinor bundle there is a canonical $\mathbb{C}$-anti-linear real or quaternionic structure \mbox{$\mathfrak{j}:\sigm \to \sigm$} such that $\mathfrak{j}^2=(-1)^{\frac{m(m+1)}{2}}$ and with the following property:
\[\mathfrak{j}:\s{r} \to \s{m-r}, \quad \mathfrak{j}(Z\cdot\varphi)=\bar Z\cdot\mathfrak{j}(\varphi), \quad \text{for } Z\in\Gamma(\tc).\]

\vspace{0.1 cm}

\subsection{The Dirac Operator and Estimates for Its Eigenvalues}

The Levi-Civita con\-nection $\nabla$ on $\tm$ induces a covariant derivative on $\sigm$, which we also denote by $\nabla$. Since the K\"ahler form is parallel, $\nabla$ preserves the splitting \eqref{decomp}.

The {\it Dirac operator} is defined as the composition 
\begin{equation}\label{dirac}
\Gamma(\sigm) \overset{\nabla}{\rightarrow} \Gamma(\mathrm{T^*M}\otimes\sigm) \overset{c}{\rightarrow} \Gamma(\sigm), \quad D=c \circ\nabla.
\end{equation}
Explicitly $D$ is locally given by
\begin{equation}\label{diracloc}
D =\sum_{j=1}^{n}e_j\cdot \nabla_{e_j}.
\end{equation}

Associated with the complex structure $J$ there is another ``square root of the Laplacian", locally defined by
\[D^c =\sum_{j=1}^{n} Je_j\cdot \nabla_{e_j}.\]
$D^c$ is also an elliptic self-adjoint operator and it follows easily that
\[(D^c)^2=D^2\hspace{0.3 cm} \text{and}\hspace{0.3 cm}  DD^c+D^cD=0.\] 

Define now the two operators
\begin{equation}\label{defd+d-}
D^+ =\frac{1}{2}(D-iD^c)=\sum_{j=1}^{n}e_j^+\cdot \nabla_{e_j^-}\quad \quad D^- =\frac{1}{2}(D+iD^c)=\sum_{j=1}^{n}e_j^-\cdot \nabla_{e_j^+},
\end{equation}
which satisfy the relations
\begin{equation} \label{d2sum}
D=D^+ +D^-, \quad (D^+)^2=0, \quad (D^-)^2=0, \quad D^+D^- + D^-D^+ =D^2.
\end{equation}

When restricting the Dirac operator to $\s{r}$, it acts as
\[D=D^+ +D^-:\Gamma(\s{r}) \to \Gamma(\s{r-1}\oplus\s{r+1}),\]
because of the following result which can be checked by straightforward computation.

\begin{Lemma}\label{cr}
For any tangent vector field $X$ and $r\in\{0,\ldots,m\}$ one has
\begin{equation}\label{cliffmult}
X^+\cdot\s{r}\subset \s{r+1} \quad X^-\cdot\s{r}\subset \s{r-1},
\end{equation}
with the convention that $\s{-1}=\s{m+1}=M\times\{0\}$. Thus, if we denote by $c_r$ the restriction of the Clifford contraction to $\mathrm{T^*M}\otimes\s{r}$, then $c_r$ splits as follows
\[c_r=c_r^+\oplus c_r^-:\Gamma(\tm\otimes \s{r})\to \Gamma(\s{r-1})\oplus\Gamma(\s{r+1}).\]
\end{Lemma}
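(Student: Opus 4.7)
The plan is a direct computation using the Clifford algebra structure: I want to determine how $\Omega$ acts on $X^{\pm}\cdot\varphi$ when $\varphi$ lies in the eigenspace $\s{r}$, and read off the eigenvalue to identify the target bundle.

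First I compute the commutator $[\Omega,X]$ for a tangent vector $X$. Starting from $\Omega=\frac{1}{2}\sum_j e_j\cdot Je_j$, I move $X$ past the factors $e_j$ and $Je_j$ using the Clifford relation $v\cdot w+w\cdot v=-2g(v,w)$. The correction terms yield $\sum_j g(e_j,X)Je_j=JX$ and $-\sum_j g(Je_j,X)e_j=JX$ (the second identity using the $g$-skew-symmetry of $J$). Collecting everything gives the identity
\begin{equation*}
\Omega\cdot X-X\cdot\Omega=2\,JX,
\end{equation*}
i.e.\ $[\Omega,X]=2JX$ acting on spinors.

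Next, observe that $JX^{+}=iX^{+}$ and $JX^{-}=-iX^{-}$, which follows immediately from the definitions $X^{\pm}=\tfrac12(X\mp iJX)$ and $J^2=-\id$. Now for $\varphi\in\Gamma(\s{r})$ we have $\Omega\cdot\varphi=i(2r-m)\varphi$, hence
\begin{equation*}
\Omega\cdot(X^{\pm}\cdot\varphi)=[\Omega,X^{\pm}]\cdot\varphi+X^{\pm}\cdot\Omega\cdot\varphi
=2JX^{\pm}\cdot\varphi+i(2r-m)\,X^{\pm}\cdot\varphi
=i\bigl(2(r\pm1)-m\bigr)\,X^{\pm}\cdot\varphi.
\end{equation*}
This shows that $X^{+}\cdot\varphi$ lies in the $i(2(r+1)-m)$-eigenspace of $\Omega$ and $X^{-}\cdot\varphi$ in the $i(2(r-1)-m)$-eigenspace. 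By Lemma~\ref{decompsp}, these eigenvalues correspond precisely to $\s{r+1}$ and $\s{r-1}$ respectively whenever the index lies in $\{0,\dots,m\}$. For the boundary cases $r=0$ with $X^{-}$ and $r=m$ with $X^{+}$, the eigenvalues $-i(m+2)$ and $i(m+2)$ do not appear in the spectrum of $\Omega$ acting on $\sigm$, so $X^{-}\cdot\varphi=0$ and $X^{+}\cdot\varphi=0$ respectively, consistent with the convention $\s{-1}=\s{m+1}=M\times\{0\}$. This proves \eqref{cliffmult}.

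Finally, the stated splitting of $c_r$ is a formal consequence: writing $X=X^{+}+X^{-}$ gives $c_r(X\otimes\varphi)=X^{+}\cdot\varphi+X^{-}\cdot\varphi$, and by the inclusions just established the two summands land in $\s{r+1}$ and $\s{r-1}$, which are distinct summands of $\sigm$; this yields the decomposition $c_r=c_r^{+}\oplus c_r^{-}$. There is no real obstacle here—the whole argument reduces to the single commutator identity $[\Omega,X]=2JX$ combined with the eigenvector property of $X^{\pm}$ under $J$.
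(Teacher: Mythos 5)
Your proof is correct: the commutator identity $[\Omega,X]=2JX$, the relations $JX^{\pm}=\pm iX^{\pm}$, and the resulting eigenvalue shift of $\Omega$ on $X^{\pm}\cdot\varphi$ constitute exactly the ``straightforward computation'' the paper invokes without writing out. The boundary cases $r=0$ and $r=m$ and the splitting of $c_r$ are also handled correctly, so nothing is missing.
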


One of the main tools for the study of the Dirac operator is the Schr{\"o}dinger-Lichnerowicz formula:
\begin{equation}\label{lichn}
D^2=\nabla^*\nabla+\frac{1}{4}S,
\end{equation}
\noindent where $\nabla^*\nabla$ is the Laplacian on the spinor bundle and $S$ is the scalar curvature of $M$.

Let us recall here for later use the lower bounds for the spectrum of the Dirac ope\-rator on Riemannian and K\"ahler manifolds. The first such inequality was obtained by Th.~Friedrich, \cite{fr1}, who showed that on an $n$-dimensional compact Riemannian spin mani\-fold $(M,g)$ each eigenvalue $\lambda$ of the Dirac operator satisfies 
\begin{equation}\label{fried}
\lambda^2\geq\frac{n}{4(n-1)}\underset{M}{\inf} S.
\end{equation}

Of course, this inequality gives new information only if the scalar curvature is positive, in which case we denote the smallest possible eigenvalue by $\lambda_0=\sqrt{\frac{n}{4(n-1)}\underset{M}{\inf} S}$. In \cite{fr1} it is shown that a limiting manifold for \eqref{fried} is characterized by the existence of a special spinor. More precisely, we have

\begin{Theorem}\label{charactriem}
Let $(M,g)$ be a Riemannian manifold which admits an eigenspinor $\varphi$ of the Dirac operator $D$ with the smallest eigenvalue $\lambda_0$. Then the manifold is Einstein and $\varphi$ is a Killing spinor for the Killing constant $-\frac{\lambda_0}{n}$, \emph{i.e.} satisfies the equation
\begin{equation}\label{kill}
\nabla_X\varphi=-\frac{\lambda_0}{n} X\cdot\varphi,
\end{equation}
for all vector fields $X$ on $M$. Conversely, if $\varphi$ is a nontrivial spinor on $M$ satisfying equation \eqref{kill} for some real constant, then $g$ is an Einstein metric and $(M,g)$ is a limiting manifold for \eqref{fried}, $\varphi$ being an eigenspinor of $D$ corresponding to the smallest eigenvalue $\lambda_0$.
\end{Theorem}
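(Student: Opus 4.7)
The plan is to prove both directions via the classical Friedrich trick of squaring a modified connection. For the direct implication, I would introduce the modified ``twistor-type'' derivative
\[
P_X\varphi := \nabla_X\varphi + \frac{\lambda_0}{n}X\cdot\varphi,
\]
and compute the pointwise norm $|P\varphi|^2 = \sum_j |P_{e_j}\varphi|^2$. Expanding the square, using that Clifford multiplication by a real vector is skew-Hermitian and that $\sum_j |e_j\cdot\varphi|^2 = n|\varphi|^2$, the cross term produces $-\tfrac{2\lambda_0}{n}\mathrm{Re}\,\langle D\varphi,\varphi\rangle$. Integrating over $M$ and substituting $\int_M|\nabla\varphi|^2 = \int_M\bigl(|D\varphi|^2 - \tfrac{S}{4}|\varphi|^2\bigr)$ from the Schr\"odinger–Lichnerowicz formula \eqref{lichn}, the eigenvalue equation $D\varphi=\lambda_0\varphi$ collapses everything to
\[
\int_M |P\varphi|^2 = \int_M \left(\frac{(n-1)\lambda_0^2}{n} - \frac{S}{4}\right)|\varphi|^2.
\]
By assumption $\lambda_0^2 = \tfrac{n}{4(n-1)}\inf_M S$, so the right-hand side is $\leq 0$, while the left is $\geq 0$. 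Hence both vanish, which forces $P\varphi \equiv 0$ (the Killing equation \eqref{kill}) and simultaneously $S\equiv\inf_M S$ on the support of $\varphi$.

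Next I would deduce the Einstein condition. Differentiating the Killing equation yields
\[
R^{\Sigma}(X,Y)\varphi = \nabla_X\nabla_Y\varphi - \nabla_Y\nabla_X\varphi - \nabla_{[X,Y]}\varphi = \frac{\lambda_0^2}{n^2}(Y\cdot X - X\cdot Y)\cdot\varphi.
\]
Then applying the standard Ricci–spinor identity $\sum_j e_j\cdot R^{\Sigma}(e_j,Y)\varphi = -\tfrac{1}{2}\ric(Y)\cdot\varphi$ and simplifying the Clifford sum $\sum_j e_j\cdot Y\cdot e_j = (n-2)Y$, I would obtain a pointwise relation of the form $\ric(Y)\cdot\varphi = c\,Y\cdot\varphi$ for a universal constant $c$ depending only on $\lambda_0$ and $n$. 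Since a nontrivial Killing spinor has no zeros (the function $|\varphi|^2$ is parallel by the Killing equation), Clifford multiplication by the vector $\ric(Y) - cY$ on $\varphi$ can only vanish if $\ric(Y) = cY$, giving the Einstein condition $\ric = \tfrac{S}{n}g$ with $S = \tfrac{4(n-1)\lambda_0^2}{n}$.

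For the converse, if $\varphi$ satisfies \eqref{kill} with some constant written as $-\lambda_0/n$, then a one-line Clifford contraction gives $D\varphi = -\lambda_0\sum_j e_j\cdot e_j\cdot\varphi/n = \lambda_0\varphi$, so $\varphi$ is automatically an eigenspinor for $\lambda_0$. The curvature computation of the previous paragraph then applies verbatim (it only uses \eqref{kill}) and shows that the metric is Einstein with scalar curvature $\tfrac{4(n-1)\lambda_0^2}{n}$, which is precisely the equality case of \eqref{fried}.

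The step I expect to be the main obstacle is the integration-by-parts computation: one must carefully track the Hermitian real parts of the cross term, reconcile the sign of $\langle X\cdot\varphi,\psi\rangle = -\langle\varphi,X\cdot\psi\rangle$, and verify that after substituting Lichnerowicz the coefficient in front of $|\varphi|^2$ is exactly $\tfrac{(n-1)\lambda_0^2}{n} - \tfrac{S}{4}$, because any sign error here destroys the argument. The second delicate point is invoking that a nontrivial Killing spinor vanishes nowhere in order to strip off the Clifford multiplication and conclude that the Ricci endomorphism is a scalar multiple of the identity.
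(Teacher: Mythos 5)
Your proposal is correct and is precisely the classical Friedrich argument — squaring the modified connection $\nabla_X+\frac{\lambda_0}{n}X\cdot$, using the Schr\"odinger--Lichnerowicz formula \eqref{lichn} to force $P\varphi\equiv 0$ in the limiting case, and then deriving the Einstein condition from the curvature of the Killing equation together with the constancy of $|\varphi|$; the paper itself offers no proof of Theorem~\ref{charactriem} but quotes it from \cite{fr1}, whose proof is essentially the one you give. The only point to make explicit is the compactness of $M$ (needed both for the integration step and for \eqref{fried} itself), which is implicit in the paper's setting of limiting manifolds.
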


The complete simply connected Riemannian manifolds carrying real Killing spinors have been described by Ch.~B\"ar \cite{baer}. The main tool in his proof is the cone construction. He shows that Killing spinors correspond to fixed points of the holonomy group of the cone and then uses the Berger-Simons classification of possible holonomy groups. 

On K\"ahler manifolds the inequality \eqref{fried} is always strict since a K\"ahler manifold does not admit Killing spinors. It was improved by K.-D.~Kirchberg, who showed that each eigenvalue $\lambda$ of the Dirac operator on a $2m$-dimensional compact spin K\"ahler manifold $(M,g,J)$ satisfies 
\begin{equation}\label{kirchodd}
\lambda^2\geq\frac{m+1}{4m}\underset{M}{\inf} S, \quad \text{if $m$ is odd,}
\end{equation}
\begin{equation}\label{kircheven}
\quad\lambda^2\geq\frac{m}{4(m-1)}\underset{M}{\inf} S,  \quad \text{if $m$ is even.}
\end{equation}

Again we can only get new information about the eigenvalues from these inequalities if the scalar curvature is positive. In this case we denote the smallest possible eigenvalue by $\lodd:=\sqrt{\frac{m+1}{4m}\underset{M}{\inf} S}$ and $\leven:=\sqrt{\frac{m}{4(m-1)}\underset{M}{\inf} S}$. The limiting manifolds of Kirchberg's inequalities are also characterized by the existence of spinors satisfying a certain differen\-tial equation. More precisely, K.-D.~Kirchberg \cite{kirch} and O.~Hijazi \cite{hij} proved:

\begin{Theorem}\label{charactodd}
Let $(M,g,J)$ be a compact K\"ahler spin manifold of complex dimension $m=2l+1$ which admits an eigenspinor $\varphi$ of $D$ corresponding to the smallest eigenvalue $\lodd$. Then the metric $g$ is Einstein and $\varphi=\varphi_{l}+\varphi_{l+1}\in\Gamma(\s{l}\oplus\s{l+1})$ is a K\"ahlerian Killing spinor with the Killing constant $-\frac{\lodd}{m+1}$, \emph{i.e.} its components satisfy the equations
\begin{equation}\label{ecodd}
\begin{split}
\nabla_X \varphi_l&= -\frac{\lodd}{m+1} X^-\cdot \varphi_{l+1},\\ 
\nabla_X \varphi_{l+1}&= -\frac{\lodd}{m+1} X^+\cdot \varphi_{l},
\end{split}
\end{equation}
for any vector field $X$. Conversely, if $\varphi=\varphi_{l}+\varphi_{l+1}\in\Gamma(\s{l}\oplus\s{l+1})$ is a spinor on $M$ satisfying the equations \eqref{ecodd} for some real constant, then $g$ is an Einstein metric and $(M,g,J)$ is a limiting manifold for \eqref{kirchodd}, $\varphi$ being an eigenspinor of $D$ corresponding to the smallest eigenvalue $\lodd$.
\end{Theorem}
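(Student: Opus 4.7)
The plan is to use the refined Kirchberg-type bounds on each K\"ahler summand $\s{r}$ given by Proposition~\ref{eigentyper}, together with the decomposition \eqref{decomp}. The first step is a type reduction: since $D^{2}=D^{+}D^{-}+D^{-}D^{+}$ preserves each $\s{r}$, every eigenspinor $\varphi=\sum_{r}\varphi_{r}$ of $D$ for $\lodd$ yields, for each $r$, an eigenvector $\varphi_{r}$ of $D^{2}|_{\s{r}}$ for $\lodd^{2}$. The bound $\lambda^{2}\geq\frac{2(r+1)}{4(2r+1)}\inf S$ (and its mirror for $r>m/2$) is compatible with $\lodd^{2}=\frac{m+1}{4m}\inf S$, for $m=2l+1$, precisely when $r=l$ or $r=l+1$, so $\varphi=\varphi_{l}+\varphi_{l+1}$; moreover equality in these bounds forces $S$ to be constant and forces $\varphi_{l}$, $\varphi_{l+1}$ to be K\"ahlerian twistor spinors in their respective summands.

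Next, I would project the eigenvalue equation $D\varphi=\lodd\varphi$ onto the K\"ahler summands using Lemma~\ref{cr}: this gives $D^{-}\varphi_{l}=0$, $D^{+}\varphi_{l+1}=0$, $D^{+}\varphi_{l}=\lodd\varphi_{l+1}$ and $D^{-}\varphi_{l+1}=\lodd\varphi_{l}$. Feeding these back into the K\"ahlerian twistor equations for $\varphi_{l}$ (with $r=l$) and $\varphi_{l+1}$ (with $r=l+1$), and noting $2(l+1)=m+1$, one reads off the K\"ahlerian Killing system \eqref{ecodd} directly, with constant $-\lodd/(m+1)$. Conversely, starting from \eqref{ecodd}, a local computation gives $D^{+}\varphi_{l}=-\tfrac{\lodd}{m+1}\sum_{j}e_{j}^{+}\cdot e_{j}^{-}\cdot\varphi_{l+1}$; the algebraic identity $\sum_{j}e_{j}^{+}\cdot e_{j}^{-}=-m+i\Omega$, combined with the $\Omega$-eigenvalue $i$ on $\s{l+1}$, yields $D^{+}\varphi_{l}=\lodd\varphi_{l+1}$. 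Symmetrically $D^{-}\varphi_{l+1}=\lodd\varphi_{l}$, whereas $D^{-}\varphi_{l}$ and $D^{+}\varphi_{l+1}$ vanish for type reasons, so $D\varphi=\lodd\varphi$.

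The Einstein property, in both directions, comes from the integrability of \eqref{ecodd}: differentiating the Killing equations once more and computing the spinor curvature $R(X,Y)\varphi$, then contracting with a local orthonormal frame via the identity $\sum_{j}e_{j}\cdot R(X,e_{j})\cdot\varphi=-\tfrac{1}{2}\ric(X)\cdot\varphi$, yields a pointwise relation of the form $\ric(X)\cdot\varphi\sim X\cdot\varphi$; the non-degeneracy of Clifford multiplication on a nontrivial $\varphi$ then forces $\ric=\frac{S}{2m}\,g$ with $S$ constant. I expect the main subtlety to be precisely this curvature/integrability step: because \eqref{ecodd} mixes types (the derivative $\nabla_{X}$ of a component in $\s{l}$ is expressed through $X^{-}$ acting into $\s{l+1}$), the commutator computation has to be carried out carefully with respect to the Hermitian type decomposition in order to recover the full Einstein condition rather than merely the $J$-invariance of $\ric$.
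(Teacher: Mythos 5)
The paper does not prove Theorem~\ref{charactodd}: it is quoted from Kirchberg \cite{kirch} and Hijazi \cite{hij}, so there is no internal proof to compare against. Judged on its own, your proposal is correct, and it is a legitimate reconstruction that runs entirely through the paper's machinery: since $D^2$ preserves the splitting \eqref{decomp}, each nonzero component $\varphi_r$ of an eigenspinor for $\lodd$ is an eigenspinor of $D^2|_{\s{r}}$ for $\lodd^2=\frac{m+1}{4m}\inf_M S$; the bound $\frac{r+1}{2(2r+1)}\inf_M S$ in \eqref{valmin1} is strictly decreasing in $r$ and equals $\lodd^2$ exactly at $r=l$ (symmetrically for \eqref{valmin2} at $r=l+1$), so any component with $r\neq l,l+1$ would violate the bound and vanishes, while at $r=l,l+1$ equality is forced, $S$ is constant and $\varphi_l\in\akts(l)$, $\varphi_{l+1}\in\hkts(l+1)$ by Proposition~\ref{eigentyper}. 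Projecting $D\varphi=\lodd\varphi$ via Lemma~\ref{cr} and inserting into \eqref{defec}, with $2(l+1)=2(m-l)=m+1$, gives \eqref{ecodd}; conversely your identity $\sum_j e_j^+\cdot e_j^-=-m+i\Omega$ is consistent with \eqref{contr1} and yields $D^+\varphi_l=\lodd\varphi_{l+1}$, $D^-\varphi_{l+1}=\lodd\varphi_l$ as claimed.

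The one step needing care -- which you flagged -- is the Einstein property. A type-restricted relation such as \eqref{lric} does \emph{not} force Einstein: by Lemma~\ref{form1} and Theorem~\ref{2eigenval} it holds for special K\"ahlerian twistor spinors on manifolds whose Ricci tensor has a kernel. For the Killing system \eqref{ecodd} with constant $a$, however, $R_{X,Y}\varphi_l=a^2(Y^-\cdot X^+-X^-\cdot Y^+)\cdot\varphi_l$, and contracting over a full real orthonormal frame with \eqref{contr2} gives $\ric(Y)\cdot\varphi_l=2(m+1)a^2\,Y\cdot\varphi_l$, a relation with a real vector, whence $\ric=2(m+1)a^2 g$ wherever $\varphi_l\neq0$; the analogous identity for $\varphi_{l+1}$ gives the same constant wherever $\varphi_{l+1}\neq0$, and these sets cover $M$ because $|\varphi_l|^2+|\varphi_{l+1}|^2$ is a positive constant. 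Hence $S=4m(m+1)a^2$ and $\lambda^2=(m+1)^2a^2=\frac{m+1}{4m}S$, so equality holds in \eqref{kirchodd} and $\varphi$ indeed realizes the smallest eigenvalue; in the direct statement one could alternatively invoke Theorem~\ref{2eigenval} with $r=l$, where the zero eigenvalue has multiplicity $2(m-2l-1)=0$. With this point made precise, your argument is sound.
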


In the case of even complex dimension, the characterization of limiting manifolds in terms of special spinors has been given by K.-D. Kichberg \cite{kirch2} and in the following stronger version by P.~Gauduchon \cite{pg}.

\begin{Theorem}\label{characteven}
Let $(M,g,J)$ be a compact K\"ahler spin manifold of complex dimension $m=2l\geq 4$ which admits an eigenspinor $\varphi$ of $D$ corresponding to the smallest eigenvalue $\leven$. Then the manifold has constant scalar curvature and $\varphi=\varphi_{l-1}+\mathfrak{j}(\varphi'_{l-1})$, where $\varphi_{l-1},\varphi'_{l-1}\in\Gamma(\s{l-1})$ are spinors satisfying the following equation:
\begin{equation}\label{eceven}
\nabla_X \varphi_{l-1}= -\frac{1}{m}X^-\cdot D\varphi_{l-1},
\end{equation}
for any vector field $X$. Conversely, if $\varphi=\varphi_{l-1}+\mathfrak{j}(\varphi'_{l-1})\in\Gamma(\s{l-1}\oplus\s{l+1})$ is a spinor on $M$ such that $\varphi_{l-1},\varphi'_{l-1}$ satisfy the equation \eqref{eceven} and if there exists a real nonzero constant $\lambda$ such that $D^2\varphi=\lambda^2\varphi$, then the manifold has constant scalar curvature and is a limiting manifold for \eqref{kircheven}, $\lambda$ being equal to $\leven$.
\end{Theorem}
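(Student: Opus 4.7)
My plan is to adapt the strategy Kirchberg used in proving \eqref{kircheven} and to read off the limiting structure directly from the equality case. The key idea is that the splitting \eqref{decomp} allows one to estimate $\|D\varphi_r\|^2$ separately on each eigenbundle $\s{r}$, and to identify the minimizing components.

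First I would decompose the eigenspinor along \eqref{decomp} as $\varphi=\sum_{r=0}^m \varphi_r$ and establish a Kirchberg-type estimate on each $\s{r}$. Projecting $\nabla\varphi_r$ onto the Cartan summand of $\mathrm{T^*M}\otimes\s{r}$ and combining with \eqref{lichn} componentwise yields an inequality of the form $\int_M |D\varphi_r|^2\,dv \geq \kappa_r \int_M S\,|\varphi_r|^2\,dv$, where $\kappa_r$ depends explicitly on $r$ and $m$, with equality iff $\varphi_r$ lies in the kernel of the corresponding K\"ahlerian twistor operator. A short check shows that $\kappa_r$ attains its minimum value $\frac{m}{4(m-1)}$ exactly at $r=l-1$ (and, by the $\mathfrak{j}$-symmetry, at $r=l+1$), recovering \eqref{kircheven}.

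Since $\varphi$ realizes the smallest eigenvalue $\leven$, summing the componentwise estimates forces $\varphi_r=0$ for $r\notin\{l-1,l+1\}$ and forces equality on the two remaining components. Hence $\varphi_{l-1}$ is a K\"ahlerian twistor spinor in $\s{l-1}$, and because $\nabla$ preserves the decomposition \eqref{decomp} the right-hand side of the corresponding K\"ahlerian twistor equation from the Introduction must also lie in $\s{l-1}$; this forces $D^-\varphi_{l-1}=0$ and collapses the equation precisely to \eqref{eceven}. Writing $\varphi_{l+1}=\mathfrak{j}(\varphi'_{l-1})$ via the antilinear isomorphism $\mathfrak{j}:\s{l+1}\to\s{l-1}$ from Section \ref{sectprelim}, the intertwining properties of $\mathfrak{j}$ with $\nabla$ and Clifford multiplication give that $\varphi'_{l-1}$ inherits the same equation \eqref{eceven}.

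For the constancy of the scalar curvature I would combine the pointwise identity $\nabla^*\nabla\varphi_{l-1}=\lambda^2\varphi_{l-1}-\tfrac{S}{4}\varphi_{l-1}$ coming from \eqref{lichn} with the identity $|\nabla\varphi_{l-1}|^2=\frac{1}{m}|D\varphi_{l-1}|^2$, which follows by expanding $\sum_j|e_j^-\cdot D\varphi_{l-1}|^2$ through $e_j^+e_j^-+e_j^-e_j^+=-1$ and the fact that $\Omega$ acts as $0$ on $\s{l}$, where $D\varphi_{l-1}=D^+\varphi_{l-1}$ lives. Differentiating $|\varphi_{l-1}|^2$ and integrating produces a formula in which $dS$ is multiplied by a nonnegative quantity built from $\varphi_{l-1}$ that cannot vanish identically; this forces $S$ constant, following the sharpening of Kirchberg's argument due to Gauduchon. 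The converse is then symmetric: assuming \eqref{eceven} and $D^2\varphi=\lambda^2\varphi$ with $\lambda\neq 0$, substituting the equation into \eqref{lichn} together with $|\nabla\varphi_{l-1}|^2=\frac{1}{m}|D\varphi_{l-1}|^2$ yields pointwise $\lambda^2=\frac{m}{4(m-1)}S$, proving both constancy of $S$ and $\lambda=\leven$. The step I expect to be most delicate is the derivation of the sharp constant $\kappa_r$, because the Cartan decomposition of $\mathrm{T^*M}\otimes\s{r}$ into $\mathrm{U}(m)$-irreducible summands is finer than in the Riemannian setting and requires careful tracking of the projections compatible with the splitting $\mathrm{T^*M}=\tcip\oplus\tcim$; once this is in hand, the identification of the nonvanishing components and the collapse to \eqref{eceven} proceed smoothly.
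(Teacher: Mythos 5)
This theorem is not proved in the paper: it is quoted from Kirchberg \cite{kirch2} and, in the stronger form stated here, from Gauduchon \cite{pg}, so your proposal can only be measured against the known arguments and against the paper's related Proposition~\ref{eigentyper}. Measured that way, your outline has a genuine gap at its central step. The componentwise bound on $\s{r}$ (Proposition~\ref{eigentyper}) gives $\kappa_r=\frac{2(r+1)}{4(2r+1)}$ for $r\le\frac m2$, and this is \emph{decreasing} in $r$: its minimum over all $r$ is attained at the middle component $r=l=\frac m2$, where it equals $\frac{m+2}{4(m+1)}<\frac{m}{4(m-1)}$, not at $r=l\pm1$ as you claim. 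Consequently ``summing the componentwise estimates'' neither recovers \eqref{kircheven} nor forces $\varphi_l=0$; the whole difficulty of the even-dimensional case is precisely the middle component. The paper's discussion in \S~\ref{extrcases} shows that nontrivial K\"ahlerian twistor spinors in $\s{l}$ are excluded only under the assumption of \emph{constant} positive scalar curvature, which is part of what Theorem~\ref{characteven} must establish, so invoking it here would be circular; the genuine proofs handle $\varphi_l$ by passing to $D^{\pm}\varphi_l$ (as in case II of the proof of Proposition~\ref{eigentyper}) and by a separate, nontrivial argument for the constancy of $S$ -- this last point being exactly Gauduchon's contribution, which your sketch compresses into one unverified sentence about ``$dS$ multiplied by a nonnegative quantity.''

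Two smaller points. First, your justification that equality forces $D^-\varphi_{l-1}=0$ (``the right-hand side of the twistor equation must also lie in $\s{l-1}$'') is spurious: for $\varphi\in\Gamma(\s{r})$ one has $X^+\cdot D^-\varphi\in\s{r}$ and $X^-\cdot D^+\varphi\in\s{r}$ automatically, so no component is forced to vanish on these grounds. The correct mechanism is the equality analysis itself: equality in the $\s{r}$-estimate for $r<\frac m2$ forces the eigenspinor to be an \emph{anti-holomorphic} K\"ahlerian twistor spinor (Lemma~\ref{inegnorme} together with Lemma~\ref{alg}), which is where $D^-\varphi_{l-1}=0$ and hence \eqref{eceven} come from. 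Second, in the converse direction the pointwise identity $D^2\varphi_{l-1}=\frac{m}{4(m-1)}S\,\varphi_{l-1}$ (i.e.\ \eqref{semieigen} with $r=l-1$) compared with $D^2\varphi_{l-1}=\lambda^2\varphi_{l-1}$ only pins down $S$ on the set where $\varphi_{l-1}$ (or $\mathfrak{j}\varphi'_{l-1}$) does not vanish; to conclude that $S$ is constant on all of $M$ you still need an argument controlling the zero set, which your sketch does not supply.
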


These limiting manifolds have been classified by A.~Moroianu in \cite{am_odd} for $m$ odd and in \cite{am_even} for $m$ even. In the even complex dimension, the result was conjectured by A. Lichnerowicz \cite{lich}, who proved it under the assumption that the Ricci tensor is parallel.

\begin{Theorem}\label{classodd}
The only limiting manifold for \eqref{kirchodd} in complex dimension $4k+1$ is the complex projective space $\mathbb{C}P^{4k+1}$. In complex dimension $4k+3$ the limiting manifolds are exactly the twistor spaces over quaternionic K\"ahler manifolds of positive scalar curvature.
\end{Theorem}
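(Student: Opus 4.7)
The plan is to exploit the existence of a K\"ahlerian Killing spinor $\varphi=\varphi_l+\varphi_{l+1}$ on $(M^{2m},g,J)$, which by Theorem~\ref{charactodd} characterizes the limiting condition and forces $(M,g,J)$ to be K\"ahler-Einstein of positive scalar curvature with Killing constant $-\lodd/(m+1)$. The first step is to extract nontrivial Killing vector fields on $M$ as spinor bilinears. Using the K\"ahlerian Killing equations \eqref{ecodd}, one can check that vector fields defined by prescriptions of the form $g(\xi,X)\sim\langle X\cdot\varphi_l,\varphi_{l+1}\rangle$ (and their $J$-twisted analogues) are holomorphic Killing. The Lie algebra they span will encode the geometric structure of $M$ beyond the K\"ahler-Einstein condition, and its dimension will distinguish the two subcases.

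The two cases are separated by the symmetry type of the conjugation $\mathfrak{j}$: for $m=4k+3$ one has $\mathfrak{j}^2=+1$ (real structure), while for $m=4k+1$ one has $\mathfrak{j}^2=-1$ (quaternionic structure). In the case $m=4k+3$, applying $\mathfrak{j}$ to $\varphi$ produces a second, $\mathbb{C}$-linearly independent K\"ahlerian Killing spinor, and the pairwise spinor bilinears of $\varphi$ and $\mathfrak{j}\varphi$ yield three independent Killing vector fields spanning a Lie algebra isomorphic to $\mathfrak{su}(2)$. Exhibiting $M$ as the total space of a Riemannian submersion by the corresponding $SO(3)$-action and computing the O'Neill tensors, one verifies that the quotient carries an induced quaternionic K\"ahler structure of positive scalar curvature, thereby realizing $M$ as its twistor space; conversely, twistor spaces of such QK manifolds are known to be K\"ahler-Einstein admitting K\"ahlerian Killing spinors, closing the characterization.

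In the case $m=4k+1$, the quaternionic structure forces the complex vector space of K\"ahlerian Killing spinors to be quaternionic, hence its complex dimension is even and $\{\varphi,\mathfrak{j}\varphi\}$ are automatically linearly independent over $\mathbb{C}$ but now represent the same $\mathbb{H}$-line. An $\mathfrak{su}(2)$ of Killing fields again arises from the same bilinear construction, but the algebraic structure is rigid enough — combined with the K\"ahler-Einstein condition and the holonomy reduction implied by the parallel K\"ahlerian twistor section (via the K\"ahlerian twistor connection introduced in \S\ref{sectktc}) — to match exactly the one on $\mathbb{C}P^{4k+1}$; a uniqueness/rigidity step (using the Berger classification of symmetric spaces or a direct identification of curvature invariants) concludes $M\cong\mathbb{C}P^{4k+1}$.

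The main obstacle lies in the $m=4k+3$ case: rigorously identifying the base of the Riemannian submersion as a \emph{global} quaternionic K\"ahler manifold (rather than a manifold only locally modelled on one) and verifying that the original $M$ is globally the associated twistor space. This requires careful use of compactness, positivity of the scalar curvature (to ensure the $SU(2)$-action has compact, well-behaved orbits), and the O'Neill formulas to transfer the quaternionic K\"ahler condition $\mathrm{Hol}\subset Sp(1)\cdot Sp(k+1)$ from the total space curvature data down to the base. Without compactness, one would only obtain the structure locally, and the global identification rests on showing that the $\mathfrak{su}(2)$-distribution integrates to a locally trivial $\mathbb{C}P^1$-fibration — a point where the K\"ahler-Einstein and positivity hypotheses are essential.
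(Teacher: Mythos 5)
This statement is not proved in the paper at all: it is Moroianu's classification, quoted from \cite{am_odd} as a known input (the paper only uses it, in Theorem~\ref{classke}). So your sketch has to be measured against the actual argument in the literature, and there it has genuine gaps. The central mechanism you propose --- spinor bilinears of $\varphi$ and $\mathfrak{j}\varphi$ producing an $\mathfrak{su}(2)$ of Killing fields whose orbits realize the twistor fibration, followed by O'Neill computations on the resulting submersion --- cannot work as stated. Already in the model case $\mathbb{C}P^{2l+1}\to\mathbb{H}P^{l}$ the vertical distribution of the twistor fibration is \emph{not} spanned by Killing vector fields generating a $3$-dimensional group action: the $Sp(1)$-action lives one level up, on the $3$-Sasakian sphere $S^{4l+3}$, and does not descend to the twistor space (right quaternionic multiplication does not commute with the $\mathbb{C}^*$-action defining $\mathbb{C}P^{2l+1}$). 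The twistor fibers of a quaternionic K\"ahler manifold are rational curves, not isometry orbits, so no amount of O'Neill formalism applied to a hypothetical $SU(2)$-action on $M$ will produce the quaternionic K\"ahler base. Moreover, your use of $\mathfrak{j}$ is inverted: for $m=4k+3$ one has $\mathfrak{j}^2=+1$ (a real structure), so $\mathfrak{j}\varphi$ need \emph{not} be $\mathbb{C}$-independent of $\varphi$; it is precisely in the case $m=4k+1$, where $\mathfrak{j}^2=-1$, that the space of K\"ahlerian Killing spinors is forced to be quaternionic and hence at least $2$-dimensional, and this extra rigidity is what singles out $\mathbb{C}P^{4k+1}$. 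Finally, your $4k+1$ case is not an argument at all: ``the algebraic structure is rigid enough \dots a uniqueness/rigidity step \dots concludes $M\cong\mathbb{C}P^{4k+1}$'' is exactly the hard content of the theorem and is left unproved.

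The known proof proceeds quite differently. Since $M$ is K\"ahler--Einstein with $S>0$, one passes to the unit circle bundle of $K^{1/2}$ (the bundle defining the spin structure) with its canonical metric; K\"ahlerian Killing spinors on $M^{2m}$ correspond to Riemannian Killing spinors on this $(2m+1)$-dimensional total space, equivalently to parallel spinors on its metric cone. B\"ar's cone/holonomy argument \cite{baer} then applies: the cone is Ricci-flat K\"ahler of real dimension $2m+2$, and the quaternionic structure ($m\equiv 1 \bmod 4$) forces enough independent parallel spinors to make the cone flat, giving the round sphere upstairs and $M=\mathbb{C}P^{4k+1}$; for $m\equiv 3\bmod 4$ the cone is hyperk\"ahler, the circle bundle is $3$-Sasakian, and $M$ is the twistor space of the associated quaternionic K\"ahler quotient of positive scalar curvature, with the converse supplied by the known existence of K\"ahlerian Killing spinors on such twistor spaces. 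If you want to reconstruct a proof, this lift-to-the-cone route is the one to follow; the $SU(2)$-action you seek exists only on the $3$-Sasakian level, not on $M$ itself.
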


\begin{Theorem}\label{classeven}
A K\"ahler manifold $M$ of even complex dimension $m\geq 4$ is a limiting manifold for \eqref{kircheven} if and only if its universal cover is isometric to a Riemannian product $N\times\mathbb{R}^2$, where $N$ is a limiting manifold for the odd complex dimension $m-1$ and $M$ is the suspension over a flat parallelogram of two commuting isometries of $N$ preserving a K\"ahlerian Killing spinor.
\end{Theorem}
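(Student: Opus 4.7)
The plan is to start from Theorem~\ref{characteven}, which rephrases the limiting condition for \eqref{kircheven} as the existence of a nontrivial spinor $\varphi=\varphi_{l-1}+\mathfrak{j}(\varphi'_{l-1})\in\Gamma(\s{l-1}\oplus\s{l+1})$ whose components satisfy~\eqref{eceven}, together with constant scalar curvature $S>0$. The first step is to differentiate \eqref{eceven} in a direction $Y$, antisymmetrize in $X,Y$, and substitute in the spinorial curvature identity $R_{X,Y}\varphi_{l-1}=(\nabla_X\nabla_Y-\nabla_Y\nabla_X-\nabla_{[X,Y]})\varphi_{l-1}$. Decomposing the resulting equation along the $(1,0)$ and $(0,1)$ tangent splittings and exploiting the $\Omega$-grading shift produced by Clifford multiplication with $X^{\pm}$ (Lemma~\ref{cr}), one expects to extract algebraic relations forcing the Ricci endomorphism to have exactly two constant eigenvalues, namely $0$ with real multiplicity $2$ and $S/(m-1)$ with multiplicity $2m-2$. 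This is the content of Moroianu's argument in \cite{am_even1}, and the introduction to the paper already signals that his method adapts to this setting.

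With the Ricci spectrum thus determined, I would then apply the theorem of Apostolov--Dr\u aghici--Moroianu \cite{adm} to upgrade ``two constant eigenvalues'' to ``parallel Ricci tensor.'' Passing to the universal cover $\widetilde M$ and invoking de~Rham's decomposition theorem along the two $\ric$-eigendistributions yields an isometric product $\widetilde M\cong N\times F$. The Ricci-flat factor $F$ has real dimension~$2$, is simply connected and flat, hence $F\cong\mathbb{R}^2$ with its standard K\"ahler structure, while the Einstein factor $N$ has complex dimension $m-1$ (odd) and positive Einstein constant $S/(m-1)$.

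The main obstacle is the third step: translating the spinor equation across the product decomposition. Using the isomorphism $\Sigma(N\times\mathbb{R}^2)\cong\Sigma N\otimes\Sigma\mathbb{R}^2$ and its compatibility with the additive splitting $\Omega_{\widetilde M}=\Omega_N+\Omega_{\mathbb{R}^2}$, I would expand $\varphi$ into factor components and plug into~\eqref{eceven}. Taking $X$ tangent to $\mathbb{R}^2$ first shows that the relevant components are parallel along the flat factor; taking $X$ tangent to $N$ and matching the $\Omega_N$-grading turns~\eqref{eceven} into precisely the K\"ahlerian Killing equation~\eqref{ecodd} for a spinor $\xi$ on $N$. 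Hence $N$ is a limiting manifold for Kirchberg's inequality \eqref{kirchodd} in odd complex dimension $m-1$, and on $\widetilde M$ the lift of $\varphi$ is a tensor product of a K\"ahlerian Killing spinor on $N$ with a parallel spinor on $\mathbb{R}^2$.

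Finally, to describe the quotient $M=\widetilde M/\pi_1(M)$, I would note that each deck transformation is an isometry of $N\times\mathbb{R}^2$; since the two de~Rham factors are singled out intrinsically by the holonomy, every such isometry preserves the splitting and decomposes as a pair $(\phi_\alpha,\tau_\alpha)$. Invariance of the spin and K\"ahler structures together with preservation of the spinor $\varphi$ (up to the natural phase allowed on the parallel $\mathbb{R}^2$-factor) forces the $\tau_\alpha$ to be translations of $\mathbb{R}^2$ generating a rank-two lattice (a flat parallelogram) and the $\phi_\alpha$ to be commuting isometries of $N$ preserving a K\"ahlerian Killing spinor; this is the claimed suspension structure. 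For the converse direction one simply reverses the construction, defining $\varphi$ on $N\times\mathbb{R}^2$ as the above tensor product, checking that it descends to the suspension, and verifying~\eqref{eceven} from~\eqref{ecodd} on $N$ and the flatness of $\mathbb{R}^2$, then appealing to Theorem~\ref{characteven}.
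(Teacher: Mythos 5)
First, note that the paper itself does not prove Theorem~\ref{classeven}: it is Moroianu's classification, quoted from \cite{am_even}, and the paper only uses it (together with Theorem~\ref{classodd}) as an ingredient for its own results. So your proposal can only be compared with the strategy the paper uses for its main Theorem~\ref{class}, which is indeed the route you describe in your first three steps (constant scalar curvature, two constant Ricci eigenvalues via the method of \cite{am_even1}, the splitting theorem of \cite{adm}, and the product analysis of the spinor). Up to minor slips --- the nonzero Ricci eigenvalue and the Einstein constant of $N$ are $S/(2(m-1))$, not $S/(m-1)$, and on the non-compact cover $N\times\mathbb{R}^2$ the integration-by-parts argument used in Theorem~\ref{product} must be carried out on the compact quotient, to which all the relevant quantities descend --- this part of your outline is sound and is essentially what the paper does in the simply connected setting.

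The genuine gap is in your last step, which is precisely the content that distinguishes Theorem~\ref{classeven} from the simply connected statement: the description of $M$ as a suspension over a flat parallelogram. You assert that preservation of the spinor ``forces the $\tau_\alpha$ to be translations of $\mathbb{R}^2$ generating a rank-two lattice'' and the $N$-parts to be two commuting isometries preserving a K\"ahlerian Killing spinor, but no argument is given, and the issue is not routine. A priori the image of $\pi_1(M)$ in $\mathrm{Isom}(\mathbb{R}^2)$ is only a cocompact planar crystallographic group, which may contain rotational parts; a rotation by $\theta$ acts on the parallel half-spinors of $\mathbb{R}^2$ by the phase $e^{\pm i\theta/2}$ (and the lift to the spin bundle is itself only defined up to sign), so invariance of the product spinor could in principle be achieved by compensating this phase through the action of the $N$-component on the finite-dimensional space of K\"ahlerian Killing spinors --- your own parenthetical ``up to the natural phase'' concedes exactly this, and it is in tension with the conclusion that the $N$-isometries preserve a Killing spinor on the nose. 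Ruling out (or absorbing) rotational and reflectional parts, showing that the deck group is generated by two commuting elements projecting to linearly independent translations, and keeping track of the spin-structure lifts is the delicate analysis carried out in \cite{am_even}; without it your argument only yields that some finite cover of $M$ is such a suspension. The converse direction you sketch (tensoring a K\"ahlerian Killing spinor on $N$ with a parallel spinor on $\mathbb{R}^2$, checking that it descends, and applying Theorem~\ref{characteven}) is fine, but there too one must verify that the two isometries admit lifts to the spin bundle compatible with the suspension, which should be stated rather than assumed.
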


\section{K\"ahlerian Twistor Spinors}\label{sectkts}

\subsection{Twistor Operators}

Natural first order differential operators acting on sections of an associated vector bundle $E$ over a manifold $M$ are given by the composition of projections onto irreducible components of the tensor product $\mathrm{T^*M}\otimes E$ with a covariant derivative on $E$. Then the principal symbol of the operator is the projection defining it. There is always a distinguished projection onto the so-called {\it Cartan summand}, whose highest weight is exactly the sum of the highest weights of the representations defining the bundles $\mathrm{T^*M}$ and $E$.

Consider now the spinor bundle $\sigm$ over a Riemannian spin manifold $(M,g)$. As the tensor product $\tm\otimes\sigm$ splits as $Spin(n)$-representation as follows: 
\[\tm\otimes\sigm\cong \sigm\oplus\ker(c),\]
we get two first order differential operators: the Dirac operator \eqref{dirac} is given by the projection (which is identified with the Clifford contraction) of the covariant derivative onto $\sigm$ and the complementary operator given by the projection of the covariant derivative onto the Cartan summand $\ker(c)$. 

In order to define the projections we need to consider an embedding of $\sigm$ into the tensor product $\tm\otimes\sigm$, \emph{e.g.} the right inverse of $c$, $\iota: \sigm\to \tm\otimes\sigm$ such that $c\circ\iota=\mathrm{id}_{\sigm}$, which is given locally as follows:
\[\iota(\varphi)=-\frac{1}{n}\sum_{j=1}^{n}e_j\otimes e_j\cdot\varphi.\]
The {\it Riemannian twistor (Penrose) operator} is then defined by
\[T: \Gamma(\sigm)\to \Gamma(\ker c), \quad T\varphi=\nabla\varphi+\frac{1}{n}e_j\otimes e_j\cdot D\varphi,\]
or, more explicitly, when applied to a vector field $X$:
\begin{equation}\label{riemtwist}
T_X\varphi=\nabla_X \varphi+\frac{1}{n}X\cdot D\varphi,
\end{equation}

\begin{Definition}\label{defrts}
Let $(M,g)$ be a Riemannian manifold. A spinor $\varphi\in\Gamma(\sigm)$ is called {\em Riemannian twistor spinor} if it belongs to the kernel of the Riemannian twistor operator, \emph{i.e.} if it satisfies the differential equation
\begin{equation}\label{ecriemtwist}
\nabla_X \varphi=-\frac{1}{n}X\cdot D\varphi,
\end{equation}
for all vector fields $X$.
\end{Definition}

Let us now consider the case of K\"ahler manifolds. It was proven by K.-D.~Kirchberg \cite{kirch1} that on a K\"ahler manifold with nonzero scalar curvature, the space of Riemannian twistor spinors is trivial (a different proof of this result is also given by O.~Hijazi \cite{hij} and for compact K\"ahler manifolds this vanishing result is due to Lichnerowicz \cite{lichn1}). Thus it is natural to consider a twistor operator adapted to the K\"ahler structure, by looking at the decomposition of each tensor product of the vector bundles $\tm\otimes \s{r}$ (for $r=0,\ldots, m$) into irreducible components under the action of the unitary group $\mathrm{U}(m)$. There are three irreducible summands:
\begin{equation}\label{srdecomp}
\tm\otimes\s{r}\cong \s{r-1}\oplus \s{r+1}\oplus \ker(c_r),
\end{equation}
where $c_r$ is the restriction of the Clifford contraction to $\s{r}$ as in Lemma~\ref{cr}. Thus, there are three first order differential operators: the first two projections are given by $c_r^-$ and $c_r^+$ respectively and the third one is the projection onto the Cartan summand, $\ker c_r$. As in the Riemannian case we need the two embeddings, which are locally given as follows:
\[\iota_r^-: \s{r-1}\to\tm\otimes \s{r}, \quad \iota_r^-(\varphi)=-\frac{1}{2(m-r+1)}\sum_{j=1}^{n}e_j\otimes e_j^+\cdot\varphi,\]
\[\iota_r^+: \s{r+1}\to\tm\otimes \s{r}, \quad \iota_r^+(\varphi)=-\frac{1}{2(r+1)}\sum_{j=1}^{n}e_j\otimes e_j^-\cdot\varphi.\]

The {\it K\"ahlerian twistor (Penrose) operator of type $r$} is then defined by the projection of the covariant derivative onto the Cartan summand:  
\[T_r:  \Gamma(\s{r})\to  \Gamma(\ker (c_r)), \quad T_r\varphi=\nabla\varphi+\frac{1}{2(m-r+1)}e_j\otimes e_j^+\cdot D^-\varphi +\frac{1}{2(r+1)}e_j\otimes e_j^-\cdot D^+\varphi,\]
or, more explicitly, when applied to a vector field $X$: 
\begin{equation}\label{kaehltwist}
(T_r)_{X}\varphi=\nabla_X \varphi+\frac{1}{2(m-r+1)}X^+\cdot D^-\varphi +\frac{1}{2(r+1)}X^-\cdot D^+\varphi.
\end{equation}

The K\"ahlerian twistor operator has already been introduced, \emph{e.g.} by P.~ Gauduchon \cite{pg}. Different approaches have been considered by O.~Hijazi \cite{hij} and by K.-D.~Kirchberg \cite{kirch2}. In Remark~\ref{relation} we discuss the relationship between the various definitions of a twistor spinor adapted to the K\"ahler structure.

\begin{Definition}\label{defkts}
Let $(M,g,J)$ be a K\"ahler manifold. A spinor $\varphi\in\Gamma(\s{r})$ is called {\em K\"ahlerian twistor spinor} if it belongs to the kernel of the K\"ahlerian twistor operator, \emph{i.e.} if it satisfies the differential equations
\begin{equation}\label{defec}
\begin{cases} 
\nabla_{X^{+}}\varphi=-\frac{1}{2(m-r+1)}X^{+}\cdot D^{-}\varphi,\\
\nabla_{X^{-}}\varphi=-\frac{1}{2(r+1)}X^{-}\cdot D^{+}\varphi,
\end{cases}
\end{equation}
for all vector fields $X$. 
\end{Definition}

\noindent We shall denote by $\kts(r)$ the space of K\"ahlerian twistor spinors in $\s{r}$. 

\noindent It follows immediately from the defining equations \eqref{defec} that the real or quaternionic structure $\mathfrak{j}$ of the spinor bundle preserves the space of K\"ahlerian twistor spinors: 
\[\mathfrak{j}:\kts(r) \stackrel{\sim}{\to} \kts(m-r).\]
Thus it is sufficient to study $\kts(r)$ for $0\leq r\leq \frac{m}{2}$.

An important class of spinors in $\kts(r)$ are the special K\"ahlerian twistor spinors defined as follows:

\begin{Definition}\label{defsemi}
A K\"ahlerian twistor spinor $\varphi\in\Gamma(\s{r})$ is {\em holomorphic}, respectively {\em anti-holomorphic K\"ahlerian twistor spinor} if $D^+\varphi=0$, respectively $D^-\varphi=0$.
\end{Definition}

We denote the space of holomorphic and anti-holomorphic K\"ahlerian twistor spinors in $\s{r}$ by $\hkts(r)$ and $\akts(r)$ respectively and notice that they are interchanged by $\mathfrak{j}$: 
\begin{equation}\label{strj}
\mathfrak{j}:\akts(r) \overset{\sim}{\to} \hkts(m-r).
\end{equation}

Parallel spinors are of course the easiest examples of a twistor spinor of any kind, because, by definition, all components of the covariant derivative vanish. However this condition is very restrictive, parallel spinors only exist on Ricci-flat K\"ahler manifolds.

Directly from the decomposition \eqref{srdecomp} and using the embeddings $\iota_r^+$ and $\iota_r^-$ we get the following Weitzenb\"ock formula, relating the differential operators acting on sections of $\s{r}$:
\begin{equation}\label{wbform}
\nabla^*\nabla=\frac{1}{2(r+1)}D^-D^+ +\frac{1}{2(m-r+1)}D^+D^- + T_r^*T_r.
\end{equation}

This is just a special case of a general Weitzenb\"ock formula which expresses the rough Laplacian $\nabla^*\nabla$ acting on an associated vector bundle $E$ as the sum of all $T^*T$ with $T$ first order differential operator given by the projections of a covariant derivative onto the irreducible components of the tensor product $\tm\otimes E$.

\begin{Remark}[Relationship to the estimates of the eigenvalues of the Dirac operator]
\normalfont{
By Theorem~\ref{charactriem}, each eigenspinor corresponding to the smallest eigenvalue of the Dirac operator on a Riemannian manifold is a Killing spinor, thus in particular a twistor spinor. Moreover, the eigenspinors corresponding to the smallest eigenvalue are exactly the eigenspinors of $D$ which are twistor spinors.

Similarly, on a K\"ahler manifold Theorems \ref{charactodd} and \ref{characteven} imply that every eigenspinor of the Dirac operator corresponding to the smallest eigenvalue is a sum of two special K\"ahlerian twistor spinors: if $m$ is odd, then \eqref{ecodd} implies that $\varphi\in\akts(\frac{m-1}{2})\oplus\hkts(\frac{m+1}{2})$ and if $m$ is even, then \eqref{eceven} implies that $\varphi\in\akts(\frac{m}{2}-1)\oplus\hkts(\frac{m}{2}+1)$. Moreover, eigenspinors corresponding to the smallest eigenvalue are exactly the eigenspinors of $D$ which are K\"ahlerian twistor spinors. The geometric  description of the limiting K\"ahler manifolds (Theorems \ref{classodd} and \ref{classeven}) provides the first examples of manifolds admitting K\"ahlerian twistor spinors. Thus, K\"ahlerian twistor spinors may be seen as a generalization of these special spinors which naturally appear in the limiting case for the lower bound of the spectrum of the Dirac operator. }
\end{Remark}

We now show that special K\"ahlerian twistor spinors on a K\"ahler manifold of positive scalar curvature are exactly the eigenspinors of the smallest eigenvalue of the square of the Dirac operator restricted to an irreducible subbundle $\s{r}$. This result has been proven by O.~Hijazi \cite{hij} and by K.-D.~Kirchberg \cite{kirch2}. Here we follow the argument given by P. Gauduchon \cite{pg} and by U. Semmelmann \cite{uweshort}.

\begin{Lemma}\label{inegnorme}
Let $\varphi\in\Gamma(\s{r})$. Then the following inequality holds
\begin{equation}\label{ineg}
|\nabla\varphi|^2\geq \frac{1}{2(r+1)}|D^+\varphi|^2 +\frac{1}{2(m-r+1)}|D^-\varphi|^2,
\end{equation}
with equality if and only if $\varphi$ is a K\"ahlerian twistor spinor, \emph{i.e.} $T_r\varphi=0$.
\end{Lemma}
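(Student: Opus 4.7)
The plan is to recognize the inequality as nothing more than the Pythagorean identity for the orthogonal $\mathrm{U}(m)$-equivariant decomposition \eqref{srdecomp}. Indeed, rewriting the definition \eqref{kaehltwist} of the K\"ahlerian twistor operator in terms of the embeddings $\iota_r^\pm$, one reads off
\begin{equation*}
\nabla\varphi \;=\; \iota_r^-(D^-\varphi) \;+\; \iota_r^+(D^+\varphi) \;+\; T_r\varphi,
\end{equation*}
which is precisely the decomposition of $\nabla\varphi\in\Gamma(\tm\otimes\s{r})$ along the three irreducible summands $\s{r-1}\oplus\s{r+1}\oplus\ker(c_r)$. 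Since these summands are mutually orthogonal (they are inequivalent $\mathrm{U}(m)$-representations, hence also pointwise orthogonal for the natural Hermitian inner product on $\tm\otimes\s{r}$), taking pointwise norms gives
\begin{equation*}
|\nabla\varphi|^2 \;=\; |\iota_r^-(D^-\varphi)|^2 \;+\; |\iota_r^+(D^+\varphi)|^2 \;+\; |T_r\varphi|^2.
\end{equation*}

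The whole content of the lemma is thus reduced to computing the two constants $|\iota_r^\pm(\psi)|^2$ in terms of $|\psi|^2$. I would do this directly from the explicit formulas. For $\psi\in\Gamma(\s{r-1})$, expanding $|\iota_r^-(\psi)|^2$ and using that $\{e_j\}$ is orthonormal, one gets
\begin{equation*}
|\iota_r^-(\psi)|^2 \;=\; \frac{1}{4(m-r+1)^2}\sum_{j=1}^n \langle e_j^+\cdot\psi, e_j^+\cdot\psi\rangle.
\end{equation*}
Now I would use the fact that the Hermitian adjoint of Clifford multiplication by the complex vector $e_j^+$ is $-e_j^-\cdot$ (since $\overline{e_j^+}=e_j^-$ and real Clifford multiplication is skew-adjoint), to rewrite this as $-\langle\psi,\sum_j e_j^-\cdot e_j^+\cdot\psi\rangle/4(m-r+1)^2$. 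The key computation is then the identity
\begin{equation*}
\sum_{j=1}^n e_j^-\cdot e_j^+ \;=\; -m - i\Omega,
\end{equation*}
which is obtained by a direct expansion using \eqref{omegacliff} and the Clifford relations $e_j^2=-1$, $e_j\cdot Je_j+Je_j\cdot e_j=0$. Restricted to $\s{r-1}$, the K\"ahler form acts as $i(2(r-1)-m)\,\mathrm{id}$, so this operator equals the scalar $-2(m-r+1)$ on $\s{r-1}$. Plugging back in yields $|\iota_r^-(\psi)|^2=\tfrac{1}{2(m-r+1)}|\psi|^2$, and the symmetric computation on the other side gives $|\iota_r^+(\psi)|^2=\tfrac{1}{2(r+1)}|\psi|^2$.

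Combining these norm identities with the orthogonal decomposition proves
\begin{equation*}
|\nabla\varphi|^2 \;=\; \frac{1}{2(m-r+1)}|D^-\varphi|^2 \;+\; \frac{1}{2(r+1)}|D^+\varphi|^2 \;+\; |T_r\varphi|^2,
\end{equation*}
from which the inequality \eqref{ineg} is immediate, with equality if and only if $T_r\varphi=0$, i.e.\ $\varphi\in\kts(r)$. The main (and only nonroutine) obstacle is the scalar computation of $\sum_j e_j^\mp\cdot e_j^\pm$ on $\s{r\mp 1}$; once this is done the rest is formal. Note that the same identity is also a direct consequence of the Weitzenb\"ock formula \eqref{wbform}, which provides an alternative one-line derivation by pairing $\nabla^*\nabla\varphi$ with $\varphi$ and integrating, but the local/pointwise route above gives the pointwise version and is needed for the equality case.
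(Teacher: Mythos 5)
Your proposal is correct and is essentially the paper's own proof: the paper also writes $\nabla\varphi=\iota_r^+(D^+\varphi)+\iota_r^-(D^-\varphi)+T_r\varphi$ and combines the orthogonality of the three summands of \eqref{srdecomp} with the norm identities $|\iota_r^+(\psi)|^2=\tfrac{1}{2(r+1)}|\psi|^2$ and $|\iota_r^-(\psi)|^2=\tfrac{1}{2(m-r+1)}|\psi|^2$. The only difference is that you carry out explicitly the Clifford computation (via $\sum_j e_j^{\mp}\cdot e_j^{\pm}$ acting as $-2(r+1)$, resp. $-2(m-r+1)$, on the relevant summand, in agreement with \eqref{contr1}) that the paper dismisses as straightforward, and your computation is correct.
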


\begin{proof}
The statement of the lemma is a direct consequence of the following relation:
\[|\nabla\varphi|^2=\frac{1}{2(r+1)}|D^+\varphi|^2+\frac{1}{2(m-r+1)}|D^-\varphi|^2+|T_r\varphi|^2,\]
which in turn is implied by the following equalities that are straightforward from the definition of the embeddings $\iota^{\pm}_r$:
\[|\iota^+_r(\varphi_{r+1})|^2=\frac{1}{2(r+1)}|\varphi_{r+1}|^2,\]
\[|\iota^-_r(\varphi_{r-1})|^2=\frac{1}{2(m-r+1)}|\varphi_{r-1}|^2,\]
\[\nabla\varphi=\iota^+_r(D^+\varphi)+\iota^-_r(D^-\varphi)+T_r\varphi.\qed\]
\end{proof}

The Lichnerowicz formula \eqref{lichn} yields the following:
\begin{Lemma}\label{alg}
Let $(M,g)$ be a compact spin manifold. If $\varphi$ is an eigenspinor of $D^2$, $D^2\varphi=\lambda\varphi$ and satisfies the inequality
\begin{equation}\label{inegalg}
|\nabla\varphi|^2\geq\frac{1}{k}|D\varphi|^2,
\end{equation}
then
\[\lambda\geq\frac{k}{k-1}\frac{1}{4}\underset{M}{\inf} S\]
and equality is attained if and only if $S$ is constant and equality in \eqref{inegalg} holds at all points of the manifold.
\end{Lemma}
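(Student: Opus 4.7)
The plan is to integrate the Schrödinger--Lichnerowicz identity \eqref{lichn} against the eigenspinor $\varphi$ and then plug in the pointwise bound \eqref{inegalg}. More precisely, pairing $D^2\varphi=\nabla^*\nabla\varphi+\tfrac14 S\varphi$ with $\varphi$ in $L^2$ and using that $\nabla^*$ is the formal adjoint of $\nabla$, I would obtain
\[
\lambda\int_M|\varphi|^2=\int_M|\nabla\varphi|^2+\frac14\int_M S\,|\varphi|^2,
\]
where I have also used $D^2\varphi=\lambda\varphi$ on the left-hand side.

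Next I would insert the hypothesis $|\nabla\varphi|^2\ge\frac{1}{k}|D\varphi|^2$ pointwise and integrate, obtaining $\int_M|\nabla\varphi|^2\ge\frac{\lambda}{k}\int_M|\varphi|^2$, and bound the scalar curvature term from below by $\underset{M}{\inf}S\cdot\int_M|\varphi|^2$. Rearranging gives
\[
\lambda\,\frac{k-1}{k}\int_M|\varphi|^2\;\ge\;\frac14\,\underset{M}{\inf}S\int_M|\varphi|^2,
\]
and since $\varphi\ne 0$ I can divide by $\int_M|\varphi|^2>0$ to reach the desired inequality.

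Finally, for the equality discussion, I would observe that equality in the final estimate forces simultaneously the pointwise equality $|\nabla\varphi|^2=\frac{1}{k}|D\varphi|^2$ and the pointwise equality $S\,|\varphi|^2=\underset{M}{\inf}S\cdot|\varphi|^2$ almost everywhere. Because $\varphi$ is an eigenspinor of the elliptic operator $D^2$, it is real analytic and hence its zero set has empty interior; therefore $S$ must equal $\underset{M}{\inf}S$ on a dense subset, and by continuity $S$ is constant on all of $M$. Conversely, if $S$ is constant and \eqref{inegalg} is an equality pointwise, the chain of estimates becomes a chain of equalities, giving the stated value of $\lambda$. I do not foresee a genuine obstacle here; the only point requiring a little care is the analyticity argument used to promote "equality $\mu$-a.e.\ on the support of $\varphi$" to "equality everywhere on $M$".
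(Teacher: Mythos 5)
Your argument is correct and is precisely the standard integration of the Schr\"odinger--Lichnerowicz formula that the paper intends (Lemma~\ref{alg} is stated there without proof as a direct consequence of \eqref{lichn}); note only that rearranging and dividing by $\frac{k-1}{k}$ uses $k>1$, which holds in every application of the lemma ($k=2(r+1)$, $2r$, etc.).
The one justification to repair is the claim that an eigenspinor of $D^2$ is real analytic: for a merely smooth metric this is not available, so instead invoke Aronszajn's strong unique continuation theorem for the Laplace-type operator $D^2=\nabla^*\nabla+\frac{1}{4}S$ (vanishing on an open set forces $\varphi\equiv 0$), which gives that the zero set of $\varphi$ has empty interior, whence $S=\underset{M}{\inf}\,S$ on a dense set and therefore everywhere by continuity.
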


\begin{Proposition}\label{eigentyper}
Let $(M,g,J)$ be a compact K\"ahler manifold of positive scalar curvature. Then any eigenvalue $\lambda$ of $D^2$ on $\s{r}$ satisfies:
\begin{equation}\label{valmin1}
\lambda\geq\frac{2(r+1)}{2r+1}\frac{1}{4}\underset{M}{\inf} S, \quad \text{if $r\leq\frac{m}{2}$}
\end{equation}
and
\begin{equation}\label{valmin2}
\lambda\geq\frac{2(m-r+1)}{2m-2r+1}\frac{1}{4}\underset{M}{\inf} S, \quad \text{if $r>\frac{m}{2}$}.
\end{equation}
Equality is attained if and only if the scalar curvature is constant and the corresponding eigenspinor is an anti-holomorphic (holomorphic) K\"ahlerian twistor spinor if  $r<\frac{m}{2}$ ($r>\frac{m}{2}$).
\end{Proposition}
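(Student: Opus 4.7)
My plan is to prove \eqref{valmin1} for $r\leq\frac{m}{2}$ directly and to deduce \eqref{valmin2} for $r>\frac{m}{2}$ by applying the first bound to $\mathfrak{j}\varphi\in\Gamma(\s{m-r})$, using that $\mathfrak{j}$ commutes with $D^2$ and interchanges $D^+$ with $D^-$. The subtle point is that a single application of Lemmas~\ref{inegnorme} and~\ref{alg} to $\varphi$ only yields the weaker bound $\lambda\geq\frac{2(m-r+1)}{2m-2r+1}\cdot\frac{1}{4}\underset{M}{\inf} S$ for $r\leq\frac{m}{2}$: the Kato-type estimate \eqref{ineg} weights $|D^+\varphi|^2$ and $|D^-\varphi|^2$ with different coefficients, so converting it into an inequality of the form $|\nabla\varphi|^2\geq k^{-1}|D\varphi|^2$ forces one to pass to the smaller coefficient, namely $\frac{1}{2(m-r+1)}$ when $r\leq\frac{m}{2}$. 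To reach the sharper bound \eqref{valmin1} I would split into two cases according to whether $D^-\varphi$ vanishes.

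If $D^-\varphi=0$, then $D\varphi=D^+\varphi$, so \eqref{ineg} becomes $|\nabla\varphi|^2\geq\frac{1}{2(r+1)}|D\varphi|^2$. Lemma~\ref{alg} with $k=2(r+1)$ yields exactly \eqref{valmin1}; the equality clauses of Lemmas~\ref{inegnorme} and~\ref{alg} then identify the limiting eigenspinor as an anti-holomorphic K\"ahlerian twistor spinor on a manifold of constant scalar curvature.

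If $D^-\varphi\neq 0$, which forces $r\geq 1$, the key observation is that $\psi:=D^-\varphi\in\Gamma(\s{r-1})$ is a nonzero $D^2$-eigenspinor for the same eigenvalue $\lambda$, since $D^-$ commutes with $D^2$ by \eqref{d2sum}, and $\psi$ is automatically anti-holomorphic because $D^-\psi=(D^-)^2\varphi=0$. Applying the previous case to $\psi$ on $\s{r-1}$ gives $\lambda\geq\frac{2r}{2r-1}\cdot\frac{1}{4}\underset{M}{\inf} S$, and a direct computation shows $\frac{2r}{2r-1}>\frac{2(r+1)}{2r+1}$ for every $r\geq 1$, so \eqref{valmin1} holds strictly and the equality case is excluded when $D^-\varphi\neq 0$. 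Combining the two cases delivers the full statement for $r<\frac{m}{2}$, while the $\mathfrak{j}$-symmetry argument handles $r>\frac{m}{2}$, with \emph{anti-holomorphic} replaced by \emph{holomorphic}. Rather than a single hard obstacle, the proof turns on this bootstrap through $D^-\varphi$: it is what lifts the generic weighted Kato inequality on $\s{r}$ to the sharp bound \eqref{valmin1} and simultaneously rules out non-anti-holomorphic equality spinors.
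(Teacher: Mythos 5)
Your proposal is correct and is essentially the paper's own proof: the same dichotomy on whether $D^-\varphi$ vanishes, the same bootstrap through $\psi=D^-\varphi\in\Gamma(\s{r-1})$ (an eigenspinor of $D^2$ for the same $\lambda$ with $D^-\psi=0$, giving the strictly larger bound $\frac{2r}{2r-1}\frac{1}{4}\inf_M S$), and the same use of the equality clauses of Lemmas~\ref{inegnorme} and~\ref{alg}. The only, harmless, difference is that for $r>\frac{m}{2}$ you transfer the bound via $\mathfrak{j}:\s{r}\to\s{m-r}$ instead of rerunning the argument with $D^+$ in place of $D^-$, as the paper does.
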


\begin{proof}
Let $\varphi\in\Gamma(\s{r})$ with $D^2\varphi=\lambda\varphi$. We distinguish two cases.\\

I. If $D^-\varphi=0$, then $|D\varphi|^2=|D^+\varphi|^2$ and \eqref{ineg} implies:
\[|\nabla\varphi|^2\geq \frac{1}{2(r+1)}|D^+\varphi|^2=\frac{1}{2(r+1)}|D\varphi|^2.\]
Applying Lemma \ref{alg}, it follows that 
\[\lambda\geq\frac{2(r+1)}{2r+1}\frac{1}{4}\underset{M}{\inf} S.\]

II. If $D^-\varphi\neq 0$, then we apply the same argument for $\varphi^-:=D^-\varphi\in\Gamma(\s{r-1})$ with $D^2\varphi^-=\lambda\varphi^-$. Then $D^-\varphi^-=0$, so that $|D\varphi^-|^2=|D^+\varphi^-|^2$ and from \eqref{ineg} it follows:
\[|\nabla\varphi^-|^2\geq \frac{1}{2r}|D^+\varphi^-|^2=\frac{1}{2r}|D\varphi^-|^2.\]
Applying again Lemma~\ref{alg}, it follows that
\[\lambda\geq\frac{2r}{2r-1}\frac{1}{4}\underset{M}{\inf} S>\frac{2(r+1)}{2r+1}\frac{1}{4}\underset{M}{\inf} S.\]

The same argument applied to the cases when $D^+\varphi=0$ and $D^+\varphi\neq 0$ shows that $\lambda\geq\frac{2(m-r+1)}{2m-2r+1}\frac{1}{4} \underset{M}{\inf}S$. If $r\leq\frac{m}{2}$, then $\frac{2(m-r+1)}{2m-2r+1}\frac{1}{4}\underset{M}{\inf}S\leq\frac{2(r+1)}{2r+1}\frac{1}{4}\underset{M}{\inf}S$ and thus follows \eqref{valmin1}, otherwise \eqref{valmin2}. The equality case follows from Lemmas \ref{inegnorme} and \ref{alg}.\qed
\end{proof}

\begin{Remark}[Relationship to other notions of K\"ahlerian twistor spinors]\label{relation}
\normalfont{
The term ``K\"ahlerian twistor spinor" has already been used in the literature. A class of spinors with this name has been introduced by K.-D.~Kirchberg \cite{kirch2} and by O.~Hijazi \cite{hij}. We explain here the relationship between Definition \ref{defkts} and these definitions. 

In \cite{kirch2}, K.-D.~Kirchberg defined  a K\"ahlerian twistor spinor of type $r$ (for $1\leq r\leq m$) to be a spinor $\varphi\in\Gamma(\sigm)$ satisfying the equation
\begin{equation}\label{eckirch}
\nabla_X\varphi=-\frac{1}{4r}(X\cdot D\varphi+JX\cdot D^c\varphi).
\end{equation}
He showed that a solution of \eqref{eckirch} must lie in $\Gamma(\s{r-1}\oplus\s{m-r+1})$. By rewriting this equation using the operators $D^+$ and $D^-$:
\begin{equation*}
\begin{cases}
\nabla_{X^{+}}\varphi=-\frac{1}{2r}X^{+}\cdot D^{-}\varphi,\\
\nabla_{X^{-}}\varphi=-\frac{1}{2r}X^{-}\cdot D^{+}\varphi,
\end{cases}
\end{equation*}
it follows that the spinors satisfying \eqref{eckirch} are exactly the anti-holomorphic and holomorphic K\"ahlerian twistor spinors in $\s{r-1}$, respectively in $\s{m-r+1}$, see Definition \ref{defsemi}. K.-D.~Kirchberg further proved, \cite{kirch1}, some vanishing results for these spinors which we shall also obtain in \S~\ref{kecase} (for K\"ahler-Einstein manifolds) and \S ~\ref{eigenric} (for K\"ahler manifolds of constant scalar curvature) as a special case.

In \cite{hij}, O.~Hijazi considered as defining equation for a spinor $\varphi\in\Gamma(\sigm)$ the following slightly more general equation than \eqref{eckirch}:
\begin{equation}\label{echij}
\nabla_X\varphi=aX\cdot D\varphi+bJX\cdot D^c\varphi,
\end{equation}
where $a$ and $b$ are any real numbers. For example, if $a=-\frac{1}{n}$ and $b=0$, then a solution of \eqref{echij} is a Riemannian twistor spinor (as \eqref{ecriemtwist} shows). Furthermore O.~Hijazi proved that on a K\"ahler spin manifold with nonzero scalar curvature there exists a nontrivial solution of \eqref{echij} if and only if $a=b=-\frac{1}{4(r+1)}$, for some integer $r$ with $0\leq r\leq m-2$, thus reducing  equation \eqref{echij} to \eqref{eckirch}. For $r=m-1$ it is proven (\cite[Theorem~4.30]{hij} and \cite[Proposition~11, Theorem~17]{kirch2}) that the solutions of the equation \eqref{echij} are exactly the Riemannian twistor spinors and they are all trivial on a K\"ahler spin manifold of nonzero scalar curvature.

In order to better compare these definitions we notice that, using the operators $D$ and $D^c$, the defining equation \eqref{defec} for K\"ahlerian twistor spinors can be rewritten as follows:
\[\nabla_X\varphi=-\frac{m+2}{8(r+1)(m-r+1)}(X\cdot D\varphi + JX\cdot D^c\varphi)-\frac{m-2r}{8(r+1)(m-r+1)}i(JX\cdot D\varphi - X\cdot D^c\varphi).\]}
\end{Remark}

An important property of special K\"ahlerian twistor spinors noticed by K.-D.~Kirchberg in \cite{kirch1} is that they are eigenspinors of the square of the Dirac operator, if we assume the scalar curvature to be constant. This is implied by the Lichnerowicz formula as follows.

Let $\varphi\in\akts(r)$: $\nabla_{X}\varphi=-\frac{1}{2(r+1)}X^{-}\cdot D^{+}\varphi$. By differentiating once this defining equation and then contracting, we get
\[\nabla_{e_j}\nabla_{e_j}\varphi=-\frac{1}{2(r+1)}e_j^{-}\cdot\nabla_{e_j}D^{+}\varphi,\]
where $\of$ is a local orthonormal frame parallel at the point where the computations are made. Since $D^-\varphi=0$, it follows
\[\nabla^*\nabla\varphi=\frac{1}{2(r+1)}D^{-}D^{+}\varphi=\frac{1}{2(r+1)}D^2\varphi,\]
which together with the Lichnerowicz formula \eqref{lichn} yields
\begin{equation}\label{semieigen}
D^{2}\varphi=\frac{r+1}{2(2r+1)}S\varphi.
\end{equation}
Thus, if the scalar curvature $S$ is constant, then $\varphi$ is an eigenspinor of $D^2$. Similarly, if $\varphi\in\hkts(r)$, then we get $D^{2}\varphi=\frac{m-r+1}{2(2m-2r+1)}S\varphi$.

\subsection{Particular Cases of K\"ahlerian Twistor Spinors}\label{extrcases}
We first look at extremal cases of K\"ahlerian twistor spinors, \emph{i.e.} of highest and lowest type and notice that they are always special K\"ahlerian twistor spinors. Let $\varphi\in\Gamma(\s{r})$: if $r=0$, then $D^-\varphi$ vanishes automatically and if $r=m$, then $D^+\varphi=0$. Thus $\varphi$ is an anti-holomorphic, respectively holomorphic K\"ahlerian twistor spinor. Moreover, as  shown by K.-D.~Kirchberg, they are exactly the anti-holomorphic, respectively holomorphic sections in $\s{0}$, respectively $\s{m}$ (\cite[Theorem~12]{kirch2}, with the remark that we use different conventions, namely $S_0$ in \cite{kirch2} corresponds to $\s{m}$ in our notation):

\[ \kts(0)=\akts(0)=\bar{H}^0(M, \s{0})=\mathfrak{j}H^0(M, K^{\frac{1}{2}}),\]
\[ \kts(m)=\hkts(m)=H^0(M, \s{m})= H^0(M, K^{\frac{1}{2}}).\]

Another special case is the middle dimension, when $m$ is even and $r=\frac{m}{2}$. This is the only case when the coefficients of the defining equations \eqref{defec} of a K\"ahlerian twistor spinor are equal:

\begin{equation}\label{defecmiddle}
\nabla_{X}\varphi=-\frac{1}{m+2}(X^{+}\cdot D^{-}\varphi+ X^{-}\cdot D^{+}\varphi).
\end{equation}

We show that on a compact K\"ahler spin manifold of positive constant scalar curvature there does not exist any nontrivial solution of this twistorial equation, which means that there are no K\"ahlerian twistor spinors in the middle dimension.

By differentiating and then contracting \eqref{defecmiddle} we obtain
\[\nabla_{e_j}\nabla_{e_j}\varphi=-\frac{1}{m+2}(e_j^{+}\cdot\nabla_{e_j^+} D^{-}\varphi+ e_j^{-}\cdot \nabla_{e_j^-}D^{+}\varphi),\]
\noindent where $\of$ is an orthonormal frame parallel at the point where the computation is made. Thus, it follows that
\[\nabla^*\nabla\varphi=\frac{1}{m+2}(D^+D^-\varphi +D^-D^+\varphi)=\frac{1}{m+2}D^2\varphi,\]
\noindent which together with Lichnerowicz formula \eqref{lichn} implies
\[D^2\varphi=\frac{m+2}{4(m+1)}S\varphi,\]
\noindent showing that if the scalar curvature is constant, then $\varphi$ is an eigenspinor of the eigenvalue $\lambda_0^{even}=\frac{m+2}{4(m+1)}S$, which is strictly smaller than $\frac{m}{4(m-1)}S$. This value is the lower bound given by Kirchberg's inequality \eqref{kircheven} for $m$ even. Thus $\varphi$ must be zero.

If $S=0$, then from the above relations we have $\nabla^*\nabla\varphi=D^2\varphi=0$, so that $\varphi$ is a parallel spinor if the manifold $M$ is compact.

\section{The K\"ahlerian Twistor Connection}\label{sectktc}

The purpose of this section is to construct for each $r$ (from now on we fix an $r$ with $0<r<m$ and $r\neq \frac{m}{2}$) a vector bundle with a connection, called {\it K\"ahlerian twistor connection}, such that K\"ahlerian twistor spinors in the subbundle $\s{r}$ are in one-to-one correspondence to parallel sections of this connection. This allows us to conclude for instance, that the space of K\"ahlerian twistor spinors is finite dimensional. The curvature of this connection provides useful formulas for computations with K\"ahlerian twistor spinors,  needed in \S ~\ref{classif} to describe geometrically the K\"ahler manifolds admitting such spinors.

The idea of constructing a larger vector bundle with a suitable connection such that solutions of a certain equation correspond to parallel sections has often appeared in the literature. For example for Riemannian twistor spinors this construction was done by Th.~Friedrich \cite{fr2} and for conformal Killing forms by U. Semmelmann \cite{uwehabil}.

By definition of K\"ahlerian twistor spinors, the covariant derivative of $\varphi$ involves $\varphi^+:=D^+\varphi$ and $\varphi^-:=D^-\varphi$. Hence, the first step will be the computation of the covariant derivatives of these sections, which yields an expression involving only zero order terms and $D^2\varphi$. Then we compute the covariant derivative of $D^2\varphi$ and get an expression involving zero order terms and the sections $\varphi^+$ and $\varphi^-$, showing that the system closes and thus defines a connection. More precisely, if we denote by $\hat{\varphi}=(\varphi, \varphi^+, \varphi^-,D^2\varphi)\in\Gamma(\s{r}\oplus\s{r+1}\oplus\s{r-1}\oplus\s{r})$, then we have $\nabla_X\hat{\varphi}=B(X)\hat{\varphi}$, where $B(X)$ is a certain $4\times 4$-matrix whose coefficients are endomorphisms of the spinor bundle, depending on the vector field $X$. The K\"ahlerian twistor connection is then a connection on $\s{r}\oplus\s{r+1}\oplus\s{r-1}\oplus\s{r}$, defined as $\hat{\nabla}_X=\nabla_X-B(X)$ and the K\"ahlerian twistor spinors are the first component of parallel sections of $\hat{\nabla}$.

Before proceeding with the computation we make a short digression to introduce the notation and to give some useful formulas needed in the sequel. We consider the local formula \eqref{diracloc} defining the Dirac operator (or the formulas \eqref{defd+d-} defining $D^-$ and $D^+$) and apply it to sections of different associated bundles (and then $\nabla$ is the corresponding connection induced by the Levi-Civita connection). When applied to functions we get: $Df=df,\quad D^{-}(f)=\partial f, \quad  D^{+}(f)=\bar\partial f$.
If we apply \eqref{diracloc} to a vector field $X$ we get the following endomorphisms of the spinor bundle:
\[D(X)\cdot:=\sum_{j=1}^{n}e_j\cdot \nabla_{e_j}X \cdot,\quad D^2(X)\cdot:=\sum_{i,j=1}^{n}e_j\cdot e_i\cdot\nabla_{e_j}\nabla_{e_i}X \cdot.\]
On forms we have: $D=d+\delta,\quad D^+=\bar\partial+\partial^*, \quad D^-=\partial+\bar{\partial}^*$.
We may also extend the formula for the Dirac operator \eqref{diracloc} on endomorphisms of the tangent bundle. For instance, for the Ricci tensor we define the following endomorphism of the spinor bundle:
\[D(\ric)(X)\cdot:=e_i\cdot(\nabla_{e_i}\ric)(X)\cdot.\]

By straightforward computation we obtain:
\begin{Lemma}
The following commutator rules hold for any vector field $X$ on $M$, where $\of$ is a local orthonormal frame parallel at the point where the computations are done:
\begin{equation} \label{dx}
DX\cdot+X\cdot D=D(X)\cdot-2\nabla_X,
\end{equation}
\begin{equation}\label{d+x}
D^+X^+\cdot +X^+\cdot D^+=D^+(X^+)\cdot, \quad \quad D^+X^-\cdot + X^-\cdot D^+=D^+(X^-)\cdot-2\nabla_{X^-},
\end{equation}
\begin{equation}\label{d-x}
D^-X^-\cdot +X^-\cdot D^-=D^-(X^-)\cdot, \quad \quad D^-X^+\cdot + X^+\cdot D^-=D^-(X^+)\cdot-2\nabla_{X^+},
\end{equation}
\begin{equation}\label{nablax}
[\nabla_X,D]=-\frac{1}{2}\ric(X)\cdot-e_i\cdot\nabla_{\nabla_{e_i}X},
\end{equation}
\begin{equation}\label{nablax+}
[\nabla_{X^+},D^+]=-\frac{1}{2}\ric(X^+)\cdot-e_i^+\cdot\nabla_{\nabla_{e_i^-}X^+}, \quad \quad [\nabla_{X^-},D^+]=-e_i^+\cdot\nabla_{\nabla_{e_i^-}X^-},
\end{equation}
\begin{equation}\label{nablax-}
[\nabla_{X^-},D^-]=-\frac{1}{2}\ric(X^-)\cdot-e_i^-\cdot\nabla_{\nabla_{e_i^+}X^-}, \quad \quad [\nabla_{X^+},D^-]=-e_i^-\cdot\nabla_{\nabla_{e_i^+}X^+},
\end{equation}
\begin{equation}\label{d2}
[D^2,X]=-\ric(X)\cdot+D^2(X)\cdot-2\nabla_{e_i}X\cdot\nabla_{e_i},
\end{equation}
\begin{equation}\label{dric}
D(\ric(X)\cdot)+\ric(X)\cdot D=D(\ric(X))\cdot -2\nabla_{\ric(X)},
\end{equation}
\begin{equation}\label{nablad2}
\begin{split}
[\nabla_X,D^2]=&-\frac{1}{2}D(\ric(X))\cdot+\nabla_{\ric(X)}-e_j\cdot e_i\cdot R_{e_j,\nabla_{e_i}X}-e_j\cdot e_i\cdot\nabla_{\nabla_{e_i}X}\nabla_{e_j}\\
&-e_j\cdot e_i\cdot\nabla_{\nabla_{e_j}\nabla_{e_i}X}-e_i\cdot\nabla_{\nabla_{e_i}X}D.\\
\end{split}
\end{equation}
\end{Lemma}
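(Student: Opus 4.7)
All the identities are pointwise, so I would work in a local orthonormal frame $\of$ parallel at the chosen point $p$ (that is, with $\nabla_{e_j}e_k|_p=0$), which kills derivatives of frame vectors and reduces each computation to Clifford algebra plus at most a single application of the spin curvature endomorphism. The three essential ingredients are the Clifford relations $e_i\cdot e_j+e_j\cdot e_i=-2\delta_{ij}$, the spin curvature formula $R_{X,Y}\varphi=\tfrac{1}{4}\sum_{i,j}g(R_{X,Y}e_i,e_j)e_i\cdot e_j\cdot\varphi$, and the well-known consequence of the first Bianchi identity $\sum_j e_j\cdot R_{e_j,X}=-\tfrac{1}{2}\,\ric(X)\cdot$ as endomorphisms of $\sigm$.

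The purely Clifford-algebraic identities \eqref{dx}, \eqref{d+x}, \eqref{d-x}, and \eqref{dric} follow from a direct Leibniz expansion. For \eqref{dx}, I would write
\[D(X\cdot\varphi)=\sum_j e_j\cdot(\nabla_{e_j}X)\cdot\varphi+\sum_j e_j\cdot X\cdot\nabla_{e_j}\varphi,\]
recognize the first sum as $D(X)\cdot\varphi$, and apply $e_j\cdot X+X\cdot e_j=-2g(e_j,X)$ to the second sum to produce $-X\cdot D\varphi-2\nabla_X\varphi$. The refinements \eqref{d+x}--\eqref{d-x} run identically, but now $e_j^{+}\cdot X^{+}+X^{+}\cdot e_j^{+}=-2g(e_j^{+},X^{+})=0$ because $g$ vanishes on $\tcip\otimes\tcip$, whereas $e_j^{+}\cdot X^{-}+X^{-}\cdot e_j^{+}=-2g(e_j^{+},X^{-})$ does not; this is exactly what makes the $-2\nabla_{X^{\pm}}$ term appear only in the mixed cases. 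Identity \eqref{dric} is just \eqref{dx} specialized to the vector field $\ric(X)$.

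For the curvature identities \eqref{nablax}, \eqref{nablax+}, \eqref{nablax-}, I would compute at the parallel point
\[[\nabla_X,D]\varphi=\sum_j\bigl(e_j\cdot[\nabla_X,\nabla_{e_j}]\varphi-e_j\cdot\nabla_{\nabla_{e_j}X}\varphi\bigr)=\sum_j e_j\cdot R_{X,e_j}\varphi-\sum_j e_j\cdot\nabla_{\nabla_{e_j}X}\varphi,\]
and then invoke the Bianchi identity recalled above to rewrite the first sum as $-\tfrac{1}{2}\,\ric(X)\cdot\varphi$; this proves \eqref{nablax}. Decomposing $X$ and the frame into $(1,0)$- and $(0,1)$-parts and repeating the calculation with $D^{\pm}$ in place of $D$ gives \eqref{nablax+} and \eqref{nablax-}, the Ricci contribution surviving only on the diagonal pieces because $\ric$ is of type $(1,1)$ on a K\"ahler manifold.

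The last two identities are then algebraic consequences of the previous ones. For \eqref{d2} I would write $[D^2,X]=D[D,X]+[D,X]D$, insert \eqref{dx} for both brackets, and use \eqref{nablax} to re-express the resulting $[\nabla_X,D]$ term; the piece $-2\nabla_{e_i}X\cdot\nabla_{e_i}$ drops out naturally when the two applications of \eqref{dx} are combined. Finally \eqref{nablad2} is obtained by expanding $[\nabla_X,D^2]=D[\nabla_X,D]+[\nabla_X,D]D$, substituting \eqref{nablax} for both factors, using \eqref{dric} to rearrange $D(\ric(X)\cdot)$, and producing the curvature term $-e_j\cdot e_i\cdot R_{e_j,\nabla_{e_i}X}$ from the iterated commutator of second covariant derivatives. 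The only real obstacle throughout is the combinatorial bookkeeping of the many Clifford products and the consistent tracking of the $\pm$-decompositions; no new conceptual input is required beyond the three tools listed at the outset.
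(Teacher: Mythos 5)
Your argument is correct and is exactly the direct computation that the paper suppresses behind ``by straightforward computation'': Leibniz expansion in a frame parallel at the point, the Clifford relations, and the Ricci contraction of the spinorial curvature, with \eqref{d2} and \eqref{nablad2} assembled from \eqref{dx}, \eqref{nablax} and \eqref{dric} as you indicate. One sign needs repair: the identity you recall, $\sum_j e_j\cdot R_{e_j,X}=-\tfrac12\ric(X)\cdot$, is stated with the opposite sign both to the paper's \eqref{contr2} and to the way you then use it (you apply it to $\sum_j e_j\cdot R_{X,e_j}$ and still obtain $-\tfrac12\ric(X)\cdot$), so either the recalled identity or its application must be flipped; with $\sum_j e_j\cdot R_{e_j,X}=+\tfrac12\ric(X)\cdot$ everything is consistent and \eqref{nablax} comes out as stated. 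Also, in \eqref{nablax+} and \eqref{nablax-} the justification is slightly finer than ``$\ric$ is of type $(1,1)$'': the curvature contribution in the mixed commutators $[\nabla_{X^-},D^+]$ and $[\nabla_{X^+},D^-]$ vanishes because the K\"ahler curvature two-form is of type $(1,1)$, so $R_{Z,W}=0$ whenever $Z,W$ are both of type $(0,1)$ (or both of type $(1,0)$), while in the unmixed commutators one needs the refined contraction $\sum_j e_j^+\cdot R_{X^+,e_j^-}=-\tfrac12\ric(X^+)\cdot$, which follows from the second identity in \eqref{contr2} together with $\rho(Y)=\ric(JY)$ and the fact that $\ric$ commutes with $J$ on a K\"ahler manifold.
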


Let now $\varphi$ be a K\"ahlerian twistor spinor in $\s{r}$. First we derive some formulas relating the second order differential operators $D^+D^-$, $D^-D^+$ and $D^2$, when applied to a K\"ahlerian twistor spinor. Since we assumed that $r\neq m/2$, the system formed by the Weitzenb\"ock formula \eqref{wbform} (using the fact that by definition $T_r\varphi=0$) and the Lichnerowicz formula \eqref{lichn} can be inversed and we get the following relations:

\begin{equation}\label{d+d-d2}
D^+D^-\varphi=-\frac{(2r+1)(m-r+1)}{m-2r}D^2\varphi+\frac{(r+1)(m-r+1)}{2(m-2r)}S\varphi,
\end{equation}

\begin{equation}\label{d-d+d2}
D^-D^+\varphi=\frac{(2m-2r+1)(r+1)}{m-2r}D^2\varphi-\frac{(r+1)(m-r+1)}{2(m-2r)}S\varphi,
\end{equation}

\begin{equation}\label{d+d-}
D^+D^-\varphi=-\frac{(2r+1)(m-r+1)}{(2m-2r+1)(r+1)}D^-D^+\varphi +\frac{m-r+1}{2(2m-2r+1)}S\varphi,
\end{equation}

\begin{equation}\label{d-d+}
D^-D^+\varphi=-\frac{(2m-2r+1)(r+1)}{(2r+1)(m-r+1)}D^+D^-\varphi +\frac{r+1}{2(2r+1)}S\varphi.
\end{equation}

We now compute the covariant derivatives of $D^+\varphi$ and $D^-\varphi$ in the direction of a vector field $X$ which is parallel at the point where the computations are done. The local orthonormal frame $\of$ is parallel at this point too.
\begin{equation*}
\begin{split}
\nabla_{X^+}(D^+\varphi)\overset{\eqref{nablax+}}{=}&D^+(\nabla_{X^+}\varphi)-\frac{1}{2}\ric(X^+)\cdot\varphi\\
=&-\frac{1}{2(m-r+1)}D^+(X^+\cdot D^-\varphi)-\frac{1}{2}\ric(X^+)\cdot\varphi\\
\overset{\eqref{d+x}}{=}&\frac{1}{2(m-r+1)}X^+\cdot D^+D^-\varphi-\frac{1}{2}\ric(X^+)\cdot\varphi\\
\overset{\eqref{d+d-d2}}{=}&-\frac{2r+1}{2(m-2r)}X^+\cdot D^2\varphi+\frac{r+1}{4(m-2r)}SX^+\cdot\varphi-\frac{1}{2}\ric(X^+)\cdot\varphi,
\end{split}
\end{equation*}

\begin{equation*}
\begin{split}
\nabla_{X^-}(D^+\varphi)\overset{\eqref{nablax-}}{=}&D^+(\nabla_{X^-}\varphi)=-\frac{1}{2(r+1)}D^+(X^-\cdot D^+\varphi)\\
\overset{\eqref{d+x}}{=}&\frac{1}{2(r+1)}[X^-\cdot D^+(D^+\varphi)+2\nabla_{X^-}(D^+\varphi)]=\frac{1}{r+1}\nabla_{X^-}(D^+\varphi),
\end{split}
\end{equation*}
so that $\nabla_{X^-}(D^+\varphi)=0$.

\noindent These two equations give the second row of the connection in \eqref{matrix1}. Similarly, for the covariant derivative of $D^-\varphi$ we obtain
 \begin{equation*}
 \begin{split}
\nabla_{X^-}(D^-\varphi)&=\frac{2m-2r+1}{2(m-2r)}X^-\cdot D^2\varphi-\frac{m-r+1}{4(m-2r)}SX^-\cdot\varphi-\frac{1}{2}\ric(X^-)\cdot\varphi,\\
\nabla_{X^+}(D^-\varphi)&=0,
\end{split}
\end{equation*}

\noindent which yield the third row of the connection in \eqref{matrix1}.

\noindent For the last component of the connection we compute the covariant derivative of $D^2\varphi$.
\begin{equation}\label{lastrow}
\begin{split}
\nabla_X(D^2\varphi)&\overset{\eqref{nablad2}}{=}D^2(\nabla_X\varphi)-\frac{1}{2}D(\ric)(X)\cdot\varphi+\nabla_{\ric(X)}\varphi-e_j\cdot e_i\cdot\nabla_{\nabla_{e_j}\nabla_{e_i}X}\varphi.
\end{split}
\end{equation}

We now compute separately the terms appearing in \eqref{lastrow}. The first one is given by

\begin{equation}\label{first}
\begin{split}
D^2(\nabla_X\varphi)&=-\frac{1}{2(m-r+1)}D^2(X^+\cdot D^-\varphi)-\frac{1}{2(r+1)}D^2(X^-\cdot D^+\varphi),
\end{split}
\end{equation}

\noindent where 

\begin{equation*}
\begin{split}
D^2(X^+\cdot D^-\varphi)\overset{\eqref{d2}}{=}&X^+\cdot D^2D^-\varphi-\ric(X^+)\cdot D^-\varphi+D^2(X^+)\cdot D^-\varphi\\
\overset{\eqref{d2sum}}{=}&X^+\cdot D^-D^+D^-\varphi-\ric(X^+)\cdot D^-\varphi+D^2(X^+)\cdot D^-\varphi\\
\overset{\eqref{d+d-}}{=}&\frac{m-r+1}{2(2m-2r+1)}X^+\cdot D^-(S\varphi)-\ric(X^+)\cdot D^-\varphi+D^2(X^+)\cdot D^-\varphi\\
=&\frac{m-r+1}{2(2m-2r+1)}S X^+\cdot D^-\varphi-\frac{m-r+1}{2(2m-2r+1)} \partial(S)\cdot X^+\cdot \varphi\\
&-\frac{m-r+1}{2m-2r+1}X^+(S)\varphi-\ric(X^+)\cdot D^-\varphi+D^2(X^+)\cdot D^-\varphi
\end{split}
\end{equation*}

\noindent and similarly
\begin{equation*}
\begin{split}
D^2(X^-\cdot D^+\varphi)=&\frac{r+1}{2(2r+1)}S X^-\cdot D^+\varphi-\frac{r+1}{2(2r+1)} \bar\partial(S)\cdot X^-\cdot \varphi\\
&-\frac{r+1}{2r+1}X^-(S)\varphi-\ric(X^-)\cdot D^+\varphi+D^2(X^-)\cdot D^+\varphi.
\end{split}
\end{equation*}

\noindent The third term in \eqref{lastrow} is given by
\begin{equation}\label{rric}
\begin{split}
\nabla_{\ric(X)}\varphi&=\nabla_{\ric(X^+)}\varphi+\nabla_{\ric(X^-)}\varphi\\
&=-\frac{1}{2(m-r+1)}\ric(X^{+})\cdot D^{-}\varphi-\frac{1}{2(r+1)}\ric(X^{-})\cdot D^{+}\varphi.
\end{split}
\end{equation}

\noindent and the last term in \eqref{lastrow} is
\begin{equation}\label{mixt}
e_j\cdot e_i\cdot\nabla_{\nabla_{e_j}\nabla_{e_i}X}\varphi=-\frac{1}{2(m-r+1)}D^2(X^+)\cdot D^-\varphi-\frac{1}{2(r+1)}D^2(X^-)\cdot D^+\varphi.
\end{equation}

Substituting \eqref{first}, \eqref{rric} and \eqref{mixt} in \eqref{lastrow}, we get the following equality, which yields the last row of the connection matrix \eqref{matrix1}:
\begin{equation}
\begin{split}
\nabla_X(D^2\varphi)=&-\frac{1}{2}D(\ric)(X)\cdot\varphi+\frac{1}{2(2m-2r+1)}X^+(S)\varphi+\frac{1}{2(2r+1)}X^-(S)\varphi\\
&+\frac{1}{4(2r+1)} \bar\partial(S)\cdot X^-\cdot \varphi+\frac{1}{4(2m-2r+1)} \partial(S)\cdot X^+\cdot\varphi\\
&-\frac{1}{4(2r+1)}S X^-\cdot D^+\varphi-\frac{1}{4(2m-2r+1)}S X^+\cdot D^-\varphi.
\end{split}
\end{equation}

Hence, we have shown that if $\varphi\in\Gamma(\s{r})$ is a K\"ahlerian twistor spinor, then the four-tuple $(\varphi, \varphi^+:=D^+\varphi, \varphi^-:=D^-\varphi, D^2\varphi)$ is parallel with respect to the following connection, which we call {\it K\"ahlerian twistor connection}:
\begin{equation}\label{matrix1}
\hat{\nabla}_X=
\renewcommand{\arraystretch}{1.8}
\begin{pmatrix}
\nabla_X & \frac{1}{2(r+1)} X^{-}\cdot & \frac{1}{2(m-r+1)} X^{+}\cdot & 0 \\
-\frac{r+1}{4(m-2r)} S X^{+}\cdot+\frac{1}{2} \ric(X^{+})\cdot & \nabla_X & 0 & \frac{2r+1}{2(m-2r)}X^{+}\cdot\\
\frac{m-r+1}{4(m-2r)} S X^{-}\cdot+\frac{1}{2} \ric(X^{-})\cdot & 0 & \nabla_X & -\frac{2m-2r+1}{2(m-2r)}X^{-}\cdot\\
A(X) & \frac{1}{4(2r+1)}S X^{-}\cdot& \frac{1}{4(2m-2r+1)}S X^{+}\cdot& \nabla_X
\end{pmatrix},
\end{equation}

\noindent where we denote by $A(X)$ the following endomorphism of the spinor bundle
\begin{equation*}
\begin{split}
A(X):=&\frac{1}{2}D(\ric)(X)-\frac{1}{4(2m-2r+1)}(dS)^-\cdot X^{+}-\frac{1}{4(2r+1)}(dS)^+\cdot X^{-}\\
&-\frac{1}{2(2m-2r+1)}X^{+}(S)-\frac{1}{2(2r+1)}X^{-}(S).
\end{split}
\end{equation*}

Moreover, it follows that any parallel section is of the form $(\varphi, \varphi^+, \varphi^-, D^2\varphi)$ with $\varphi$ a K\"ahlerian twistor spinor:

\begin{Proposition}\label{1-1}
There is a one-to-one correspondence between K\"ahlerian twistor spinors in $\s{r}$ and parallel sections of the bundle $\s{r}\oplus\s{r+1}\oplus\s{r-1}\oplus\s{r}$ with respect to the connection $\hat{\nabla}$ given by \eqref{matrix1}. The explicit bijection is given by
\[\varphi \mapsto  \hat{\varphi}=(\varphi, \varphi^+, \varphi^-,D^2\varphi).\]
\end{Proposition}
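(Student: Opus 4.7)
The plan is to handle the two directions separately. The forward implication is already contained in the row-by-row computations preceding \eqref{matrix1}: when $\varphi\in\kts(r)$, the tuple $\hat\varphi=(\varphi,D^+\varphi,D^-\varphi,D^2\varphi)$ is directly seen to be annihilated by $\hat\nabla$. Since the first component of $\hat\varphi$ is $\varphi$ itself, the assignment $\varphi\mapsto\hat\varphi$ is automatically injective. The substance of the proposition is the converse, for which I fix a $\hat\nabla$-parallel section $(\varphi_0,\varphi_1,\varphi_2,\varphi_3)\in\Gamma(\s{r}\oplus\s{r+1}\oplus\s{r-1}\oplus\s{r})$ and aim to show that $\varphi_0$ is a K\"ahlerian twistor spinor with $\varphi_1=D^+\varphi_0$, $\varphi_2=D^-\varphi_0$, and $\varphi_3=D^2\varphi_0$.

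The first row of \eqref{matrix1} applied to $(\varphi_0,\varphi_1,\varphi_2,\varphi_3)$ reads
\begin{equation*}
\nabla_X\varphi_0=-\tfrac{1}{2(r+1)}X^-\cdot\varphi_1-\tfrac{1}{2(m-r+1)}X^+\cdot\varphi_2
\end{equation*}
for every vector field $X$. Splitting this equation by the $(1,0)$- and $(0,1)$-types of $X$ produces exactly the system \eqref{defec}, but with $\varphi_1$ and $\varphi_2$ in place of $D^+\varphi_0$ and $D^-\varphi_0$. To identify these sections with the actual $D^{\pm}$-derivatives, I substitute the expression for $\nabla_{e_j^-}\varphi_0$ into the local formula \eqref{defd+d-}:
\begin{equation*}
D^+\varphi_0=\sum_j e_j^+\cdot\nabla_{e_j^-}\varphi_0=-\tfrac{1}{2(r+1)}\Bigl(\sum_j e_j^+\cdot e_j^-\Bigr)\cdot\varphi_1.
\end{equation*}
A short Clifford computation based on \eqref{omegacliff} yields $\sum_j e_j^+\cdot e_j^-=-m+i\Omega$, which acts on the eigenbundle $\s{r+1}$ by the scalar $-2(r+1)$; hence $D^+\varphi_0=\varphi_1$. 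The symmetric identity $\sum_j e_j^-\cdot e_j^+=-m-i\Omega$ acts on $\s{r-1}$ by $-2(m-r+1)$ and gives $D^-\varphi_0=\varphi_2$. Thus $\varphi_0$ is a genuine K\"ahlerian twistor spinor.

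It remains to prove $\varphi_3=D^2\varphi_0$, which I extract by a difference argument. The already established forward direction applied to the K\"ahlerian twistor spinor $\varphi_0$ produces a second $\hat\nabla$-parallel section $(\varphi_0,\varphi_1,\varphi_2,D^2\varphi_0)$. Subtracting it from the given parallel section leaves $(0,0,0,\psi)$ with $\psi:=\varphi_3-D^2\varphi_0\in\Gamma(\s{r})$ still $\hat\nabla$-parallel. Evaluating the second row of \eqref{matrix1} on this section reduces to
\begin{equation*}
\tfrac{2r+1}{2(m-2r)}X^+\cdot\psi=0\qquad\text{for every vector field }X,
\end{equation*}
and the standing hypothesis $0<r<m$, $r\neq m/2$ ensures that the scalar coefficient is finite and nonzero. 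Choosing $X$ from a local orthonormal frame yields $e_j^+\cdot\psi=0$ for all $j$; Clifford-contracting with $e_j^-$ and using $\sum_j e_j^-\cdot e_j^+\cdot\psi=-2(m-r)\psi$ on $\s{r}$ then forces $\psi=0$, again because $r<m$. The only genuine bookkeeping in the whole argument is thus the Clifford identity $\sum_j e_j^{\pm}\cdot e_j^{\mp}=-m\pm i\Omega$ together with its eigenvalues on each $\s{s}$; everything else is a direct substitution into \eqref{matrix1}.
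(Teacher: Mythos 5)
Your proposal is correct and follows essentially the same route as the paper: the forward direction is the computation preceding \eqref{matrix1}, the first row plus Clifford contraction (the eigenvalues of $\sum_j e_j^{\pm}\cdot e_j^{\mp}$ on $\s{r\pm1}$) identifies the second and third components with $D^{\pm}\varphi_0$, and the second row applied to the difference of the two parallel sections forces the last components to agree after one more contraction, using $r\neq m/2$ and $r<m$. The only difference is cosmetic: the paper compares the second rows of the two sections directly rather than subtracting them, which is the same argument.
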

\begin{proof}
In the above discussion we have seen that the function $\varphi \mapsto  \hat{\varphi}$ takes values in the space of parallel sections with respect to the connection \eqref{matrix1} and it is obviously injective. Thus we only need to prove its surjectivity.

Let $(\varphi,\psi,\xi,\eta)\in\Gamma(\s{r}\oplus\s{r+1}\oplus\s{r-1}\oplus\s{r})$ be a parallel section with respect to \eqref{matrix1}. Since the first row of this connection is exactly the K\"ahlerian twistor operator, it follows by contractions that the first three components are $(\varphi, \varphi^+, \varphi^-)$, where $\varphi$ is a K\"ahlerian twistor spinor. From
\[\nabla_{X^-}\varphi=-\frac{1}{2(r+1)} X^{-}\cdot\psi\]
we get by contraction that $\psi=D^-\varphi$. Similarly, from
\[\nabla_{X^+}\varphi=-\frac{1}{2(m-r+1)} X^{+}\cdot\xi\]
we get $\xi=D^+\varphi$. Substituting now $\psi$ and $\xi$ in the first row yields that $\varphi$ is a K\"ahlerian twistor spinor. For the last component of the four-tuple we may compare for example the second row of the connection matrix applied to the parallel sections $(\varphi,\psi= \varphi^+,\xi= \varphi^-,\eta)$ and $(\varphi, \varphi^+, \varphi^-,D^2\varphi)$ obtaining:
\[X^+\cdot\eta=X^+\cdot D^2\varphi,\]
which contracted yields $\eta=D^2\varphi$.\qed
\end{proof}

If the manifold $(M,g,J)$ is K\"ahler-Einstein, then $\ric(X)=\frac{S}{n}X$ and the K\"ahlerian twistor connection $\hat{\nabla}$ simplifies as follows:
\begin{equation}
\renewcommand{\arraystretch}{1.8}
\hat{\nabla}_X=
\begin{pmatrix}
\nabla_X & \frac{1}{2(r+1)}X^{-}\cdot & \frac{1}{2(m-r+1)} X^{+}\cdot & 0 \\
-\frac{r(m+2)}{4m(m-2r)}S X^{+}\cdot & \nabla_X & 0 & \frac{2r+1}{2(m-2r)}X^{+}\cdot \\
\frac{(m-r)(m+2)}{4m(m-2r)}S X^{-}\cdot & 0 & \nabla_X & -\frac{2m-2r+1}{2(m-2r)} X^{-}\cdot \\
0 & \frac{1}{4(2r+1)}S X^{-}\cdot & \frac{1}{4(2m-2r+1)}S X^{+}\cdot & \nabla_X
\end{pmatrix}.
\end{equation}

\section{The Curvature of the K\"ahlerian Twistor Connection}\label{sectcurv}

In this section we compute the curvature of the K\"ahlerian twistor connection. The first component of this curvature allows us to reduce the K\"ahlerian twistor connection $\hat{\nabla}$ to one acting on a bundle of smaller rank, namely on $\s{r}\oplus\s{r+1}\oplus\s{r-1}$, which is given by the matrix \eqref{matrix}. 

Let $\varphi\in\Gamma(\s{r})$ be a K\"ahlerian twistor spinor. Since $\hat{\varphi}=(\varphi, \varphi^+, \varphi^-, \eta:=D^2\varphi)$ is a parallel section of $\hat{\nabla}$ (Proposition~\ref{1-1}), then by definition the curvature of this connection vanishes on this section: $\hat{R}_{X,Y}(\hat\varphi)=0$, for any vector fields $X$ and $Y$. Thus, computing the components of $\hat{R}$ we get certain identities which by further contractions yield the formulas in Proposition~\ref{formtwists}. 

By straightforward computation it follows that the first component of $\hat{R}$ is given by:
\begin{equation*}
\begin{split}
\hat{R}_{X,Y}\begin{pmatrix} \varphi\\ \varphi^+\\ \varphi^-\\ \eta\end{pmatrix}_1
=& R_{X,Y}\varphi - \frac{S}{4(m-2r)} i<X,JY>\varphi+\frac{1}{4(r+1)}[X^-\cdot \ric(Y^+)-Y^-\cdot \ric(X^+)]\cdot\varphi\\
&+\frac{1}{4(m-r+1)}[X^+\cdot \ric(Y^-)-Y^+\cdot \ric(X^-)]\cdot\varphi\\
&+\frac{1}{4(m-2r)}[\frac{2r-m}{(r+1)(m-r+1)}(X^+\cdot Y^- - Y^+\cdot X^-)+\frac{2(2r+1)}{r+1}i<X,JY>]\cdot\eta.
\end{split}
\end{equation*}

We now need the following formulas for contractions, which hold for endomorphisms of the spinor bundle restricted to $\mathrm{\Sigma}_r\mathrm{M}$:
\begin{equation}\label{contr1}
e_i^+\cdot e_i^-=-2r, \quad e_i^-\cdot e_i^+=-2(m-r),
\end{equation}
\begin{equation}\label{contr2}
e_i\cdot R_{e_i,Y}=\frac{1}{2}\ric(Y), \quad e_i^-\cdot R_{e_i^+,Y}=\frac{1}{4}[\ric(Y)+i\rho(Y)],
\end{equation}
\begin{equation}\label{contr3}
e_i\cdot \ric(e_i)=-S, \quad e_i^-\cdot \ric(e_i^+)=-\frac{S}{2}-i\rho,
\end{equation}
where $\rho$ is the Ricci form: $\rho(X,Y)=\ric(JX,Y)$. Using \eqref{contr1}-\eqref{contr3} we obtain the following formula for the contraction of the first component of the curvature: 
\begin{equation*}
\begin{split}
0=&[-\frac{S}{4(m-2r)}(Y^- -Y^+)-\frac{1}{4(r+1)}(\frac{S}{2}Y^- +i Y^-\cdot\rho)-\frac{1}{4(m-r+1)}(\frac{S}{2}Y^+ -i Y^+\cdot\rho)]\cdot\varphi\\
&+\frac{1}{4(m-2r)}[\frac{2r-m}{(r+1)(m-r+1)}(-2(m-r+1)Y^- -2rY^+)+\frac{2(2r+1)}{r+1}(Y^- -Y^+)]\cdot\eta\\ 
=& -\frac{(m+2)S}{8(r+1)(m-2r)}Y^-\cdot\varphi +\frac{(m+2)S}{8(m-r+1)(m-2r)}Y^+\cdot\varphi-\frac{i}{4(r+1)}Y^-\cdot\rho\cdot\varphi\\
&+\frac{i}{4(m-r+1)}Y^+\cdot\rho\cdot\varphi+\frac{m+1}{2(r+1)(m-2r)}Y^-\cdot\eta-\frac{m+1}{2(m-r+1)(m-2r)}Y^+\cdot\eta,
\end{split}
\end{equation*}
or equivalently:
\begin{equation*}
\frac{(m+2)S}{4(m-2r)}Y^-\cdot\varphi +\frac{i}{2}Y^-\cdot\rho\cdot\varphi-\frac{m+1}{m-2r}Y^-\cdot\eta=0,
\end{equation*}
\begin{equation*}
\frac{(m+2)S}{4(m-2r)}Y^+\cdot\varphi +\frac{i}{2}Y^+\cdot\rho\cdot\varphi-\frac{m+1}{m-2r}Y^+\cdot\eta=0,
\end{equation*}
which both yield by a further contraction:
\begin{equation*}
\frac{(m+2)S}{4(m-2r)}\cdot\varphi +\frac{i}{2}\cdot\rho\cdot\varphi-\frac{m+1}{m-2r}\cdot\eta=0,
\end{equation*}
so that 
\begin{equation}\label{eta}
D^2\varphi=\eta=\frac{(m+2)S}{4(m+1)}\varphi +\frac{m-2r}{2(m+1)}i\rho\cdot\varphi.
\end{equation}

This relation allows us to reduce the connection to one acting on sections of the vector bundle $\s{r}\oplus \s{r+1}\oplus \s{r-1}$. Thus, substituting \eqref{eta} in the second and third row of the connection \eqref{matrix1}, we get the following connection with respect to which the triple $(\varphi, \varphi^+, \varphi^-)$ is parallel for any K\"ahlerian twistor spinor $\varphi\in\Gamma(\s{r})$:

\begin{equation}\label{matrix}
\renewcommand{\arraystretch}{1.8}
\tilde{\nabla}_X=
\begin{pmatrix}
\nabla_X & \frac{1}{2(r+1)} X^{-}\cdot & \frac{1}{2(m-r+1)} X^{+}\cdot \\
-\frac{1}{8(m+1)}S X^+\cdot +\frac{1}{2} \ric(X^{+})\cdot+\frac{2r+1}{4(m+1)}iX^{+}\cdot\rho\cdot & \nabla_X & 0 \\
-\frac{1}{8(m+1)}S X^-\cdot +\frac{1}{2} \ric(X^{-})\cdot -\frac{2m-2r+1}{4(m+1)}iX^{-}\cdot \rho\cdot & 0 & \nabla_X 
\end{pmatrix}.
\end{equation}

As in Proposition~\ref{1-1}, it follows that there is a one-to-one correspondence between K\"ahlerian twistor spinors on $\s{r}$ and parallel sections of the bundle $\s{r}\oplus\s{r+1}\oplus\s{r-1}$ with respect to the connection $\tilde{\nabla}$ given by \eqref{matrix}. An immediate consequence is that the space of K\"ahlerian twistor spinors is finite dimensional and an upper bound for its dimension is given as follows.

\begin{Corollary}\label{cordim}
Let $(M,g,J)$ be a connected spin K\"ahler manifold. The dimension of the space of K\"ahlerian twistor spinors in $\s{r}$ is bounded by the rank of the vector bundle $\s{r}\oplus\s{r+1}\oplus\s{r-1}$:
\[\dim_{\mathbb{C}}(\kts(r))\leq \binom{m}{r}+\binom{m}{r+1}+\binom{m}{r-1}.\]
\end{Corollary}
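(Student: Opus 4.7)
\medskip

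\noindent\textbf{Proof plan.} The plan is to reduce the claim to the standard fact that the space of parallel sections of any vector bundle connection on a connected manifold injects into a fiber. By the construction immediately preceding the corollary, there is a one-to-one correspondence $\varphi \mapsto (\varphi, D^+\varphi, D^-\varphi)$ between $\kts(r)$ and the space of $\tilde{\nabla}$-parallel sections of the bundle $\s{r}\oplus\s{r+1}\oplus\s{r-1}$ (the analogue of Proposition~\ref{1-1} for the reduced connection $\tilde{\nabla}$ of \eqref{matrix}). This bijection is $\mathbb{C}$-linear, so it suffices to bound the dimension of the space of $\tilde{\nabla}$-parallel sections.

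First, I would invoke the standard fact that for any connection on a vector bundle over a connected manifold, a parallel section is uniquely determined by its value at any single point $x_0 \in M$: indeed, parallel transport along any path emanating from $x_0$ determines the section globally, and distinct parallel sections must have distinct values at $x_0$. Consequently, the evaluation map at $x_0$ from the space of parallel sections into the fiber $(\s{r}\oplus\s{r+1}\oplus\s{r-1})_{x_0}$ is injective, so the dimension of parallel sections is at most the complex rank of the bundle.

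Second, by Lemma~\ref{decompsp}, $\mathrm{rank}_{\mathbb{C}}(\s{k}) = \binom{m}{k}$ for each $k$, with the convention $\s{-1} = \s{m+1} = 0$ already in place, and therefore
\[
\mathrm{rank}_{\mathbb{C}}(\s{r}\oplus\s{r+1}\oplus\s{r-1}) = \binom{m}{r}+\binom{m}{r+1}+\binom{m}{r-1}.
\]
Combining the injection $\kts(r) \hookrightarrow \{\tilde{\nabla}\text{-parallel sections}\}$ with the evaluation injection gives the stated bound.

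There is essentially no obstacle here: all the substantive work has already been carried out in Sections~\ref{sectktc} and~\ref{sectcurv}, where the Kählerian twistor connection is built and reduced using the curvature identity \eqref{eta}. The corollary is a formal consequence. The only mild care needed is to note that the analogue of Proposition~\ref{1-1} for $\tilde{\nabla}$ in place of $\hat{\nabla}$ follows by the same argument: its first row still encodes the twistor equation \eqref{defec}, so any $\tilde{\nabla}$-parallel section $(\varphi,\psi,\xi)$ must satisfy $\psi = D^-\varphi$ and $\xi = D^+\varphi$ by contracting the top row, after which $\varphi \in \kts(r)$ automatically.
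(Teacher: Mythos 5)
Your proof is correct and takes essentially the same approach as the paper, which obtains the bound directly from the one-to-one correspondence between $\kts(r)$ and $\tilde{\nabla}$-parallel sections of $\s{r}\oplus\s{r+1}\oplus\s{r-1}$, a parallel section being determined by its value at one point of the connected manifold. (Only a cosmetic remark: in your final paragraph the identifications should read $\psi=D^+\varphi\in\Gamma(\s{r+1})$ and $\xi=D^-\varphi\in\Gamma(\s{r-1})$, consistent with the map $\varphi\mapsto(\varphi,D^+\varphi,D^-\varphi)$ you introduced, and in any case only the injectivity of this map is needed for the bound.)
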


\begin{Remark}
\normalfont{
Twistor operators are one of the typical examples of Stein-Weiss operators. It was shown by Th.~Branson, \cite{branson}, that they are elliptic. Hence, it follows directly that on compact spin K\"ahler manifolds the space of K\"ahlerian twistor spinors is finite dimensional. However,  our Corollary~\ref{cordim} is a purely local result: the manifold $M$ is not assumed to be compact.}
\end{Remark}

The second component of the curvature of the connection $\hat{\nabla}$ is given by straightforward computation as follows:
\begin{equation*}
\begin{split}
\hat{R}_{X,Y}\begin{pmatrix} \varphi\\ \varphi^+\\ \varphi^- \end{pmatrix}_2=& R_{XY}\varphi^+ + \frac{1}{4(r+1)}[-\frac{1}{4(m+1)} S(X^+\cdot Y^{-}-Y^+\cdot X^-)\\&+(\ric(X^+)\cdot Y^- -\ric(Y^+)\cdot X^-) +\frac{2r+1}{2(m+1)}i(X^+\cdot\rho\cdot Y^- - Y^+\cdot\rho\cdot X^-)]\cdot\varphi^+\\
&+ \frac{1}{4(m-r+1)}[-\frac{1}{4(m+1)}S (X^+\cdot Y^+ - Y^+\cdot X^+) \\
&+ (\ric(X^{+})\cdot Y^+ - \ric(Y^{+})\cdot X^+) +\frac{2r+1}{2(m+1)}i(X^+\cdot\rho\cdot Y^+ - Y^+\cdot\rho\cdot X^+]\cdot\varphi^-\\
&+ [-\frac{1}{8(m+1)}(X(S) Y^+ -Y(S) X^+) +\frac{1}{2} ((\nabla_X \ric)(Y^{+})-(\nabla_Y \ric)(X^{+}))\\
&+\frac{2r+1}{4(m+1)}i(Y^{+}\cdot\nabla_X\rho-X^{+}\cdot\nabla_Y\rho)]\cdot\varphi.
\end{split}
\end{equation*}

Contracting this equation using the formulas \eqref{contr1}-\eqref{contr3} we obtain
\begin{equation*}
\begin{split}
0=&-\frac{1}{4(r+1)(m+1)}[\frac{S}{2}(rY^+ + (r+1)Y^-)+i(r(2r+1)Y^+ + (r+1)(2m-2r+1)Y^-)\cdot\rho\\
&+2(r+1)(m-2r)\ric(Y^-)]\cdot\varphi^+\\
&+\frac{1}{4(m-r+1)(m+1)}[\frac{mS}{2}Y^+ + (4r^2+4r-4rm-3m)iY^+\cdot\rho\\
&+2(m-r-1)(2r-m)\ric(Y^+)]\cdot\varphi^-\\
&+\frac{1}{4(m+1)}[-(m-r-1)Y^+(S)+(r+1)Y^-(S)-rY^+\cdot (dS)^+ + (r+1)Y^+\cdot(dS)^-\\
&+2(2r+1)(m-r-1)i\nabla_{Y^+}\rho+2(r+1)(2m-2r+1)i\nabla_{Y^-}\rho]\cdot\varphi.
\end{split}
\end{equation*}

By projecting this equality onto $\s{r}$ and $\s{r+2}$, it becomes equivalent to the following two equations:
\begin{equation}\label{eec2}
\begin{split}
&-[\frac{S}{2}Y^- +i(2m-2r+1)Y^-\cdot\rho+2(m-2r)\ric(Y^-)]\cdot\varphi^+\\
&+\frac{1}{m-r+1}[\frac{mS}{2}Y^+ + (4r^2+4r-4rm-3m)iY^+\cdot\rho+2(m-r-1)(2r-m)\ric(Y^+)]\cdot\varphi^-\\
&+[-(m-r-1)Y^+(S)+(r+1)Y^-(S) + (r+1)Y^+\cdot(dS)^- \\
&+2(2r+1)(m-r-1)i\nabla_{Y^+}\rho+2(r+1)(2m-2r+1)i\nabla_{Y^-}\rho]\cdot\varphi=0,
\end{split}
\end{equation}

\begin{equation}\label{eec1}
[\frac{S}{2}Y^+ +i(2r+1)Y^+\cdot\rho]\cdot\varphi^+ +(r+1)Y^+\cdot (dS)^+\cdot\varphi=0.
\end{equation}

The contraction of \eqref{eec1} yields ( if $r\neq m-1$):
\begin{equation*}
[\frac{S}{2} +(2r+1)i\rho]\cdot\varphi^+ +(r+1)(dS)^+\cdot\varphi=0,
\end{equation*}

so that

\begin{equation}\label{eec11}
i\rho\cdot\varphi^+=-\frac{1}{2(2r+1)}S\varphi^+-\frac{r+1}{2r+1}(dS)^+\cdot\varphi.
\end{equation}

The contraction of \eqref{eec2} yields:
\begin{equation*}
\begin{split}
0&=(m-r+1)S\varphi^++2(2r+1)(m-r+1)i\rho\cdot\varphi^+\\
&+\frac{2(r-m)(r+1)S}{m-r+1}\varphi^-+\frac{4(r+1)(r-m)(2r-2m-1)}{m-r+1}i\rho\cdot\varphi^-\\
&+2(r+1)(m-r+1)(dS)^+\cdot\varphi -4(r+1)(m-r)(dS)^-\cdot\varphi,
\end{split}
\end{equation*}
\noindent which, by projections onto $\s{r+1}$, respectively $\s{r-1}$, is equivalent to the following two equations:
\begin{equation}\label{eec21}
i\rho\cdot\varphi^+=-\frac{1}{2(2r+1)}S\varphi^+-\frac{r+1}{2r+1}(dS)^+\cdot\varphi,
\end{equation}
\noindent which is again the equation \eqref{eec11} (here it follows to be true also for $r=m-1$) and
\begin{equation}\label{eec22}
i\rho\cdot\varphi^-=\frac{1}{2[2(m-r)+1]}S\varphi^-+\frac{m-r+1}{2(m-r)+1}(dS)^-\cdot\varphi.
\end{equation}

Inserting the relations \eqref{eec21} and \eqref{eec22} back in \eqref{eec2}, it becomes equivalent to the following two equations
\begin{equation}\label{phi+}
\begin{split}
i\nabla_{Y^+}\rho\cdot\varphi=&-\frac{(m-2r)}{(2r+1)(m-r+1)}[\frac{1}{2(2m-2r+1)}SY^+ -\ric(Y^+)]\cdot\varphi^-\\
&+\frac{1}{2(2r+1)}Y^+(S)\cdot\varphi+\frac{1}{2(2m-2r+1)}Y^+\cdot(dS)^-\cdot\varphi,
\end{split}
\end{equation}

\begin{equation}\label{phi-}
\begin{split}
i\nabla_{Y^-}\rho\cdot\varphi=&-\frac{m-2r}{(r+1)(2m-2r+1)}[\frac{S}{2(2r+1)}Y^- -\ric(Y^-)]\cdot\varphi^+\\
&-\frac{1}{2(2m-2r+1)}Y^-(S)\cdot\varphi-\frac{1}{2(2r+1)}Y^-\cdot(dS)^+\cdot\varphi.
\end{split}
\end{equation}

A similar computation for the third component of the curvature of the K\"ahlerian twistor connection only yields again the equations \eqref{eec21}, \eqref{eec22}, \eqref{phi+} and \eqref{phi-}.

We gather now the formulas that we obtained for the actions on a special K\"ahlerian twistor spinor and deduce some new ones. Is is enough to consider anti-holomorphic K\"ahlerian twistor spinors, since similar formulas are then fulfilled by holomorphic K\"ahlerian twistor spinors. In fact, they are obtained by conjugating the ones for anti-holomorphic K\"ahlerian twistor spinors and replacing the constant $k=\frac{1}{2(2r+1)}$ with $k=\frac{1}{2(2m-2r+1)}$.

\begin{Proposition}\label{formtwists}
Let $(M,g,J)$ be a spin K\"ahler manifold and $\varphi\in\Gamma(\s{r})$ be an anti-holomorphic K\"ahlerian twistor spinor for some fixed $r$ with $0<r<m$: 
\begin{equation*}
\begin{cases}
\nabla_{X^-}\varphi=-\frac{1}{2(r+1)}X^-\cdot\varphi^+,\\
\nabla_{X^+}\varphi=0,
\end{cases}
\end{equation*}
\noindent so that in particular $\varphi^-=D^-\varphi=0$. Then the following formulas hold, where we denote by $k:=\frac{1}{2(2r+1)}$:
\begin{equation}\label{ds-}
(dS)^-\cdot\varphi=0,
\end{equation}
\begin{equation}\label{ld2}
D^2\varphi=k(r+1)S\varphi,
\end{equation}
\begin{equation}\label{lnab}
\nabla_X\varphi^+=-\frac{1}{2}\ric(X^+)\cdot\varphi,
\end{equation}
\begin{equation}\label{lric}
\ric(X^-)\cdot\varphi=kS X^-\cdot\varphi,
\end{equation}
\begin{equation}\label{lrho}
i\rho\cdot\varphi=kS\varphi,
\end{equation}
\begin{equation}\label{lnab-rho}
i\nabla_{X^+}\rho\cdot\varphi=kX^+(S)\varphi,
\end{equation}
\begin{equation}\label{lrho-}
i\rho\cdot\varphi^+=-kS\varphi^+ -2k(r+1)(dS)^+\cdot\varphi,
\end{equation}
\begin{equation}\label{lric-}
\ric(X^-)\cdot \varphi^+=kS X^-\cdot\varphi^+ - 2k(r+1)X^-(S)\varphi,
\end{equation}
\begin{equation}\label{lnabrho-}
i\nabla_{X^+}\rho\cdot\varphi^+=-\ric^2(X^+)\cdot\varphi+kS\ric(X^+)\cdot\varphi-2k(r+1)\nabla_{X^+}(dS)^+\cdot\varphi-kX^+(S)\varphi^+,
\end{equation}
\begin{equation}\label{lnabrho+}
i\nabla_{X^-}\rho\cdot\varphi^+=-2k(r+1)\nabla_{X^-}(dS)^+\cdot\varphi -3kX^-(S)\varphi^+  -kX^-\cdot(dS)^+\cdot\varphi^+.
\end{equation}
\end{Proposition}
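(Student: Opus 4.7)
The plan is to derive all ten identities by specializing the curvature identities \eqref{eta}, \eqref{eec21}, \eqref{eec22}, \eqref{phi+}, \eqref{phi-}, previously established for a general K\"ahlerian twistor spinor, to the anti-holomorphic case $\varphi^{-}=0$, and then by differentiating the resulting relations along $\nabla_{X^\pm}$ whenever necessary.

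First I collect the immediate formulas. Identity \eqref{ld2} is the specialization of \eqref{semieigen} to $\varphi\in\akts(r)$, and substituting it into \eqref{eta} with a short arithmetic simplification yields \eqref{lrho}. Setting $\varphi^{-}=0$ in \eqref{eec22} isolates $(dS)^{-}\cdot\varphi$ and produces \eqref{ds-}, while \eqref{lrho-} is a direct restatement of \eqref{eec21}. For \eqref{lnab} and \eqref{lric} I exploit that the four-tuple $(\varphi,\varphi^{+},0,D^{2}\varphi)$ is parallel with respect to $\hat{\nabla}$ (Proposition~\ref{1-1}): inserting $\varphi^{-}=0$ and $D^{2}\varphi=k(r+1)S\varphi$ into the second and third rows of the connection matrix \eqref{matrix1} makes each row collapse, after algebraic simplification, to \eqref{lnab} and \eqref{lric} respectively. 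For \eqref{lnab-rho} the same substitution in \eqref{phi+} leaves only the term $Y^{+}\cdot(dS)^{-}\cdot\varphi$, which vanishes trivially by \eqref{ds-}.

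The three remaining identities require differentiation of formulas already obtained. For \eqref{lric-}, the key preliminary observation is that $D^{-}\varphi^{+}=D\varphi^{+}=D^{2}\varphi=k(r+1)S\varphi$, since $\varphi^{-}=0$ gives $D\varphi=\varphi^{+}$ and $(D^{+})^{2}=0$ kills $D^{+}\varphi^{+}$. Applying $\nabla_{X^{-}}$ to this equation and using the commutator \eqref{nablax-}, together with $\nabla_{X^{-}}\varphi^{+}=0$ (immediate from \eqref{lnab} because $(X^{-})^{+}=0$), then delivers \eqref{lric-} once the defining twistor equation is inserted for $\nabla_{X^{-}}\varphi$ on the right-hand side. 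For \eqref{lnabrho-} and \eqref{lnabrho+} the strategy is to differentiate \eqref{lrho-} by $\nabla_{X^{+}}$ and $\nabla_{X^{-}}$ respectively, using \eqref{lnab} and the twistor equation. In \eqref{lnabrho-} the delicate point is commuting the two-form $\rho$ past the vector $\ric(X^{+})$ via the Clifford identity $\rho\cdot V=V\cdot\rho+2\iota_{V}\rho$; combined with \eqref{lrho} and the K\"ahler identification $\iota_{\ric(X^{+})}\rho\cdot\varphi=i\ric^{2}(X^{+})\cdot\varphi$ (which follows from $\rho(V,W)=\ric(JV,W)$ and $\ric\circ J=J\circ\ric$, so that $J\ric(X^{+})=i\ric(X^{+})$), this produces the $\ric^{2}(X^{+})$ term. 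For \eqref{lnabrho+}, differentiation contributes an extra term $(dS)^{+}\cdot\nabla_{X^{-}}\varphi=-\tfrac{1}{2(r+1)}(dS)^{+}\cdot X^{-}\cdot\varphi^{+}$; moving $X^{-}$ past $(dS)^{+}$ by the Clifford anticommutator and using the elementary calculation $g((dS)^{+},X^{-})=X^{-}(S)$ then yields the claim.

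The main obstacle I anticipate is the sign bookkeeping in \eqref{lnabrho-}: the $-\ric^{2}(X^{+})$ term arises from combining the ``$+$'' sign in the Clifford anticommutator $\rho\cdot V=V\cdot\rho+2\iota_{V}\rho$ with two factors of $i$, one coming from $\rho\cdot\varphi=-ikS\varphi$ (via \eqref{lrho}) and a second from the identification $\iota_{\ric(X^{+})}\rho\leftrightarrow i\ric^{2}(X^{+})$. Once this is handled, everything else is a routine application of the commutator formulas \eqref{dx}--\eqref{nablad2} together with the three basic vanishings $\nabla_{X^{+}}\varphi=0$, $\varphi^{-}=0$, and $(dS)^{-}\cdot\varphi=0$.
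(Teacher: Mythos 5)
Your proposal is correct and follows essentially the same plan as the paper's proof: you read off \eqref{ds-} and \eqref{lrho-} from the curvature identities \eqref{eec22} and \eqref{eec21}, take \eqref{ld2} from \eqref{semieigen} and \eqref{lrho} from \eqref{eta}, obtain \eqref{lnab} and \eqref{lric} by substituting \eqref{ld2} into the second and third rows of the connection \eqref{matrix1}, get \eqref{lnab-rho} from \eqref{phi+} using $\varphi^-=0$ and \eqref{ds-}, and derive \eqref{lnabrho-}, \eqref{lnabrho+} by differentiating \eqref{lrho-} with the Clifford commutation $i\rho\cdot\ric(X^+)\cdot\varphi=i\ric(X^+)\cdot\rho\cdot\varphi-2\ric^2(X^+)\cdot\varphi$ — exactly as in the paper. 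The one place you genuinely diverge is \eqref{lric-}: the paper differentiates \eqref{lrho} and compares the outcome with the curvature identity \eqref{phi-}, whereas you differentiate $D^-\varphi^+=D^2\varphi=k(r+1)S\varphi$ in a $(0,1)$-direction using the commutator \eqref{nablax-} together with $\nabla_{X^-}\varphi^+=0$ and the twistor equation; this is valid and slightly more economical, since it bypasses \eqref{phi-} entirely. A minor misattribution in your sign discussion for \eqref{lnabrho-}: the two factors of $i$ that produce $-\ric^2(X^+)\cdot\varphi$ are the overall $i$ in front of $\rho$ and the $i$ in $\iota_{\ric(X^+)}\rho=i\ric^2(X^+)$, while \eqref{lrho} only feeds the $kS\,\ric(X^+)\cdot\varphi$ term; this does not affect the correctness of your argument.
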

\begin{proof} Equation \eqref{ds-} follows directly from \eqref{eec22} and $\varphi^-=0$. Equation \eqref{ld2} is the property of a special K\"ahlerian twistor spinor to be an eigenspinor of $D^2$, which we have shown in \eqref{semieigen}.

By substituting \eqref{ld2} into the second row of the connection $\hat\nabla$ given by \eqref{matrix1} we get \eqref{lnab}. Similarly, by substituting \eqref{ld2} into the third row of the connection $\hat\nabla$ given by \eqref{matrix1} we get equation \eqref{lric}. Equations \eqref{eta} and \eqref{ld2} yield \eqref{lrho}.

Substituting \eqref{ds-} and $\varphi^-=0$ in \eqref{phi+}  yields \eqref{lnab-rho}. Differentiating \eqref{lrho} we get
\begin{equation*}
\begin{split}
i\nabla_{X^-}\rho\cdot\varphi=&-i\rho\cdot\nabla_{X^-}\varphi+kX^-(S)\varphi+kS\nabla_{X^-}\varphi\\
=&kX^-(S)\varphi-\frac{1}{2(r+1)}kSX^-\cdot\varphi^+ +\frac{i}{2(r+1)}[X^-\cdot\rho -2i \ric(X^-)]\cdot\varphi^+\\
\overset{\eqref{lrho-}}{=}&kX^-(S)\varphi-\frac{1}{2(r+1)}kSX^-\cdot\varphi^+ -\frac{1}{2(r+1)}[kSX^-\cdot\varphi^+\\
&+2k(r+1)X^-\cdot(dS)^+\cdot\varphi -2 \ric(X^-)\cdot\varphi^+]\\
=&kX^-(S)\varphi-\frac{1}{r+1}[kSX^--\ric(X^-)]\cdot\varphi^+ -kX^-\cdot(dS)^+\cdot\varphi,
\end{split}
\end{equation*}

\noindent which compared with \eqref{phi-} yields 
\begin{equation*}
\begin{split}
-kX^-(S)\varphi+\frac{1}{r+1}[kSX^--\ric(X^-)]\cdot\varphi^+=&\frac{(m-2r)}{(2m-2r+1)(r+1)}[kSX^- -\ric(X^-)]\cdot\varphi^+\\
&+\frac{1}{2(2m-2r+1)}X^-(S)\varphi,
\end{split}
\end{equation*}

\noindent thus proving \eqref{lric-}. Equation \eqref{lrho-} is just \eqref{eec21}. Differentiating \eqref{lrho-} we get
\begin{equation*}
\begin{split}
i\nabla_X\rho\cdot\varphi^+=&-i\rho\cdot\nabla_X\varphi^+ -kX(S)\varphi^+ -kS\nabla_X\varphi^+ -2k(r+1)\nabla_X(dS)^+\cdot\varphi\\
&-2k(r+1)(dS)^+\cdot\nabla_X\varphi\\
\overset{\eqref{lnab}}{=}&\frac{1}{2}i\rho\cdot \ric(X^+)\cdot\varphi -kX(S)\varphi^+ +\frac{1}{2}kS \ric(X^+)\cdot\varphi\\ &-2k(r+1)\nabla_X(dS)^+\cdot\varphi+k(dS)^+\cdot X^-\cdot\varphi^+,
\end{split}
\end{equation*}
which together with the commutator relation
\[i\rho\cdot \ric(X^+)\cdot\varphi=i\ric(X^+)\cdot\rho\cdot\varphi -2\ric^2(X^+)\cdot\varphi\]
and \eqref{lrho} yields \eqref{lnabrho-} and \eqref{lnabrho+}.\qed
\end{proof}

\section{The Geometric Description}\label{classif}

In this section we describe geometrically the simply connected compact spin K\"ahler manifolds of constant scalar curvature admitting K\"ahlerian twistor spinors (Theorem~\ref{class}). Let $(M,g,J)$ be such a manifold. The main steps of the proof are: 

\begin{enumerate}
	\item All K\"ahlerian twistor spinors on $M$ are special K\"ahlerian twistor spinors.
	\item The Ricci tensor has two constant eigenvalues.
	\item The Ricci tensor is parallel.
	\item $M=M_1\times M_2$ with $M_1$ a Ricci-flat K\"ahler manifold and $M_2$ K\"ahler-Einstein admitting K\"ahlerian twistor spinors.
\end{enumerate}

\subsection{Special K\"ahlerian Twistor Spinors}

We show that if the scalar curvature is cons\-tant, then each K\"ahlerian twistor spinor is anti-holomorphic or holomorphic. 

\begin{Proposition}\label{allsemi}
Let $(M,g,J)$ be a compact K\"ahler spin manifold of positive constant scalar curvature and $\varphi\in\Gamma(\s{r})$ ($0<r<m$) a K\"ahlerian twistor spinor. Then $\varphi$ is an anti-holomorphic K\"ahlerian twistor spinor if $r<\frac{m}{2}$ or a holomorphic K\"ahlerian twistor spinor if $r>\frac{m}{2}$.
\end{Proposition}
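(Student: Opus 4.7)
\medskip

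The plan is to reduce to the case where $\varphi$ is an eigenspinor of $D^{2}$, and then sandwich its eigenvalue between two matching bounds, one coming from Proposition~\ref{eigentyper} (from below) and one from the identity \eqref{d+d-d2} (from above). So first I would show that $\kts(r)$ is invariant under $D^{2}$ when $S$ is constant. Using $(D^{+})^{2}=(D^{-})^{2}=0$ together with $D^{2}=D^{+}D^{-}+D^{-}D^{+}$, a direct check gives the algebraic commutators $[D^{2},D^{+}D^{-}]=[D^{2},D^{-}D^{+}]=0$. Combining the Weitzenb\"ock formula \eqref{wbform} and Lichnerowicz \eqref{lichn} one writes
\[
T_{r}^{*}T_{r}=\frac{2r+1}{2(r+1)}\,D^{-}D^{+}+\frac{2m-2r+1}{2(m-r+1)}\,D^{+}D^{-}-\frac{S}{4},
\]
which, under the assumption that $S$ is constant, commutes with $D^{2}$. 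Since $T_{r}$ is elliptic (Stein--Weiss) and $M$ is compact, $\ker(T_{r}^{*}T_{r})=\kts(r)$ is finite dimensional, and $D^{2}$ restricted to this subspace is self-adjoint, hence diagonalizable. Therefore it suffices to treat $\varphi\in\kts(r)$ with $D^{2}\varphi=\lambda\varphi$.

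Next, assume $r<\frac{m}{2}$ (the case $r>\frac{m}{2}$ follows by symmetry via $\mathfrak{j}$). Proposition~\ref{eigentyper} immediately gives the lower bound
\[
\lambda\geq\frac{2(r+1)}{2r+1}\cdot\frac{S}{4}=\frac{(r+1)S}{2(2r+1)}.
\]
For the matching upper bound, I would feed $D^{2}\varphi=\lambda\varphi$ into the identity \eqref{d+d-d2} (which itself is a consequence of $T_{r}\varphi=0$ and \eqref{lichn}), and then pair with $\varphi$ in $L^{2}$. Since $(D^{-})^{*}=D^{+}$, we obtain
\[
\|D^{-}\varphi\|_{L^{2}}^{2}=\int_{M}\langle D^{+}D^{-}\varphi,\varphi\rangle=\Bigl[-\frac{(2r+1)(m-r+1)}{m-2r}\lambda+\frac{(r+1)(m-r+1)}{2(m-2r)}S\Bigr]\|\varphi\|_{L^{2}}^{2}.
\]
As $r<\frac{m}{2}$ gives $m-2r>0$, non-negativity of the left-hand side forces $\lambda\leq\frac{(r+1)S}{2(2r+1)}$.

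Combining the two bounds yields equality, and hence $\|D^{-}\varphi\|_{L^{2}}^{2}=0$, i.e.\ $D^{-}\varphi=0$, so $\varphi$ is an anti-holomorphic K\"ahlerian twistor spinor. The case $r>\frac{m}{2}$ is completely analogous, using \eqref{d-d+d2} instead of \eqref{d+d-d2} and the corresponding lower bound in \eqref{valmin2}; it yields $D^{+}\varphi=0$.

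The only delicate step is the first one: establishing that $D^{2}$ preserves $\kts(r)$ so that diagonalization is legitimate. Everything else is a two-sided estimate on a single eigenvalue. The commutator computation $[D^{2},D^{\pm}D^{\mp}]=0$ is the crucial ingredient and depends in an essential way on the constancy of $S$; without it, the above proof would fail, which is consistent with the hypothesis of the proposition.
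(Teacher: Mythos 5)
Your proof is correct, and it uses the same two ingredients as the paper's argument --- the Weitzenb\"ock identity obtained from \eqref{wbform} and \eqref{lichn}, and the eigenvalue estimate of Proposition~\ref{eigentyper} --- but it combines them by a different mechanism. The paper never reduces to eigenspinors: it integrates the pointwise identity \eqref{wb} satisfied by the twistor spinor to get $\tfrac{2r+1}{2(r+1)}\|D\varphi\|^2\le\tfrac{S}{4}\|\varphi\|^2$, with equality precisely when $D^-\varphi=0$, and then sandwiches the Rayleigh quotient $\|D\varphi\|^2/\|\varphi\|^2$ between the lower bound $\tfrac{2(r+1)}{2r+1}\tfrac{S}{4}\le\lambda_{\min}$ of \eqref{valmin1} and the same constant; the fact that $\varphi$ is an eigenspinor then comes out as a byproduct rather than being an input. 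Your preliminary step --- $[D^2,D^{\pm}D^{\mp}]=0$, hence for constant $S$ the operator $D^2$ commutes with $T_r^*T_r$ and preserves $\kts(r)$, which is finite dimensional and on which $D^2$ is Hermitian for the $L^2$ product, hence diagonalizable --- is correct (finite dimensionality can alternatively be quoted from Corollary~\ref{cordim}, with no appeal to ellipticity), and the $D^2$-invariance of $\kts(r)$ is a nice observation in its own right, but it is an extra layer that the Rayleigh-quotient trick renders unnecessary. Once you are on an eigenspinor, your upper bound obtained by pairing \eqref{d+d-d2} with $\varphi$ and using $(D^+)^*=D^-$, $m-2r>0$ is just the integrated form of \eqref{wb} specialized to that eigenspinor, and the equality case again forces $\|D^-\varphi\|^2=0$, exactly as in the paper; the treatment of $r>\tfrac m2$ via $\mathfrak{j}$ (or directly via \eqref{d-d+d2} and \eqref{valmin2}) is also fine. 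In short: same estimates, but your route trades the paper's variational shortcut for a small amount of spectral theory on $\kts(r)$, which is legitimate since $S$ is assumed constant.
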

\begin{proof}
The relation \eqref{wbform} and the Lichnerowicz formula \eqref{lichn} imply the following Weitzenb\"ock formula:
\begin{equation}\label{wb}
\frac{2r+1}{2(r+1)}D^-D^+\varphi+\frac{2m-2r+1}{2(m-r+1)}D^+D^-\varphi=\frac{1}{4}S\varphi.
\end{equation}
If $r<\frac{m}{2}$, then $\frac{2r+1}{2(r+1)}<\frac{2m-2r+1}{2(m-r+1)}$ and from \eqref{wb} it follows by integration (where we denote by $||\cdot||$ the global norm: $||\varphi||^2=\int_M<\varphi,\varphi>vol_M$):
\begin{equation}\label{ineg1}
\frac{2r+1}{2(r+1)}||D\varphi||^2\leq\frac{1}{4}S||\varphi||^2,
\end{equation}
with equality if and only if $D^-\varphi=0$. Further it follows that
\[\frac{2(r+1)}{2r+1}\frac{1}{4}S\leq\lambda_{min}\leq\frac{||D\varphi||^2}{||\varphi||^2}\leq\frac{2(r+1)}{2r+1}\frac{1}{4}S,\]
where the first inequality is given by \eqref{valmin1} and the second inequality is the property of the Rayleigh quotient to have as its minimum the smallest eigenvalue $\lambda_{min}$ of the operator (here $D^2$ acting on the Hilbert space of square integrable sections of $\s{r}$).
It then follows that equality must hold in \eqref{ineg1}, so that $\varphi$ must be an anti-holomorphic K\"ahlerian twistor: $D^-\varphi=0$ and in particular an eigenspinor of $D^2$ associated with the smallest eigenvalue of $D^2$ on $\s{r}$ (Proposition~\ref{eigentyper}).

The same argument shows that for $r>\frac{m}{2}$, any K\"ahlerian twistor spinor in $\s{r}$ must be a holomorphic K\"ahlerian twistor spinor.

If $m$ even and $r=\frac{m}{2}$, then we have seen in \S \ref{extrcases} that the only nontrivial K\"ahlerian twistor spinors are the parallel ones. \qed
\end{proof}

\begin{Remark}\label{pos}
\normalfont{
We notice that the condition for the scalar curvature to be positive in Proposition~\ref{allsemi} is not restrictive. If $S\leq 0$, then taking in \eqref{wb} the scalar product with $\varphi$ and integrating over $M$ yields $D^+\varphi=D^-\varphi=0$. As $\varphi$ is also in the kernel of the twistor operator, it follows that $\nabla\varphi=0$, implying that the manifold is Ricci-flat (and thus $S=0$), unless $\varphi\equiv 0$.} Concluding, if the scalar curvature is constant and there exists a nontrivial and non-parallel K\"ahlerian twistor spinor, then the scalar curvature must be positive.
\end{Remark}

\subsection{The Eigenvalues of the Ricci Tensor}\label{eigenric}

We now show that the Ricci tensor of a spin K\"ahler manifold of constant scalar curvature admitting a special K\"ahlerian twistor spinor has two constant eigenvalues and is parallel. This result was proven by A.~Moroianu in \cite{am_even1} and \cite{am_even} for the special case of a limiting even dimensional spin K\"ahler manifold for Kirchberg's inequality \eqref{kircheven}. We note that the same method can be applied when the existence of a special K\"ahlerian twistor spinor is assumed. As shown in Proposition~\ref{allsemi} this is not restrictive, since all K\"ahlerian twistor spinor are special K\"ahlerian twistor spinors if $S$ is constant.

\begin{Theorem}\label{2eigenval}
The Ricci tensor of a K\"ahler spin manifold of constant scalar curvature admitting a nontrivial non-parallel K\"ahlerian twistor spinor has two constant non-negative eigenvalues. More precisely, if $\varphi$ is an anti-holomorphic K\"ahlerian twistor spinor in $\s{r}$, then the Ricci tensor has the eigenvalues $\frac{S}{2(2r+1)}$ and $0$, with multiplicities $2(2r+1)$ and $2(m-2r-1)$ respectively. If $\varphi\in\Gamma(\s{r})$ is a holomorphic K\"ahlerian twistor spinor, then the Ricci tensor has the eigenvalues $\frac{S}{2(2m-2r+1)}$ and $0$, with multiplicities $2(2m-2r+1)$ and $2(2r-m-1)$ respectively. 
\end{Theorem}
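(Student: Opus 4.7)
The plan is pointwise, and proceeds from the algebraic identities already derived in Proposition~\ref{formtwists}. By Proposition~\ref{allsemi} we may assume $\varphi\in\s{r}$ is anti-holomorphic (the holomorphic case $r>m/2$ reduces to this by applying $\mathfrak{j}$, and $r=m/2$ has been disposed of in \S\,\ref{extrcases}); since $\varphi$ is non-parallel, Remark~\ref{pos} gives $S>0$, and \eqref{ld2} together with $T_r\varphi=0$ in \eqref{wbform} forces $\varphi^+:=D^+\varphi\neq 0$. With $S$ constant, every $dS$-term in \eqref{lric}, \eqref{lric-}, \eqref{lrho}, \eqref{lrho-} vanishes, leaving the pointwise identities
\begin{equation*}
(\ric(X^-)-kSX^-)\cdot\varphi=0,\quad (\ric(X^-)-kSX^-)\cdot\varphi^+=0,\quad i\rho\cdot\varphi=kS\varphi,\quad i\rho\cdot\varphi^+=-kS\varphi^+,
\end{equation*}
where $k=\tfrac{1}{2(2r+1)}$.

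Fix a point $p$ and diagonalize $\ric$ in a unitary frame $\{f_j\}_{j=1}^m$ of $\tcim$ at $p$ with real eigenvalues $\lambda_j$; set $E:=\{j:\lambda_j=kS\}$. Substituting $X^-=f_j$ into the first identity yields $(\lambda_j-kS)\,f_j\cdot\varphi=0$, so $f_j\cdot\varphi=f_j\cdot\varphi^+=0$ for every $j\notin E$. Under the Hitchin isomorphism $\sigm\cong L\otimes\Lambda^{0,*}M$, Clifford multiplication by $f_j\in\tcim$ is --- up to a nonzero scalar --- the interior product $\iota_{f_j}$, so $\varphi$ and $\varphi^+$ correspond to nonzero forms $\omega\in\Lambda^r E^\flat$ and $\omega^+\in\Lambda^{r+1}E^\flat$, which already implies $|E|\geq r+1$.

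The main obstacle, and the substance of the theorem, is upgrading this bound to $|E|=2r+1$ and simultaneously showing $\lambda_j=0$ for $j\notin E$. Here I would follow the argument of A.~Moroianu in \cite{am_even1}, which handles the case $r=m/2-1$ and --- as the author notes --- extends verbatim to arbitrary $r$. Schematically: differentiating the algebraic identities above along both $X^+$ and $X^-$ and using the second-order formulas \eqref{lnab}, \eqref{lnabrho-}, \eqref{lnabrho+} yields the refined relation $\ric(\ric-kSI)(X^+)\cdot\varphi=i\nabla_{X^+}\rho\cdot\varphi^+$; coupling this with the companion identity $\nabla_{X^+}\rho\cdot\varphi^+=\ric(X^+)\wedge\rho\cdot\varphi$ obtained by differentiating \eqref{lrho-}, and with the support constraints $\omega\in\Lambda^r E^\flat$, $\omega^+\in\Lambda^{r+1}E^\flat$, forces every eigenvalue of $\ric$ other than $0$ and $kS$ to vanish and $|E|$ to be exactly $2r+1$. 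The multiplicity count is then sealed by $\operatorname{tr}\ric=S$: the identity $2(2r+1)\cdot kS = S$ leaves no room for further nonzero eigenvalues. Constancy of both eigenvalues over $M$ is automatic from constancy of $S$.
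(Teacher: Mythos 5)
Your zeroth-order part is fine (it reproduces \eqref{lric}--\eqref{lrho-} with $S$ constant, i.e.\ Lemma~\ref{form1}), and you correctly locate the difficulty, but the step you present as closing it does not work. The two identities you propose to ``couple'' are in fact one and the same: differentiating \eqref{lrho-} along $X^+$ (with $S$ constant), inserting \eqref{lnab} and the Clifford commutator $\rho\cdot \ric(X^+)=\ric(X^+)\cdot\rho+2\,\iota_{\ric(X^+)}\rho$, reproduces exactly \eqref{lnabrho-}; there is no independent relation of the form $\nabla_{X^+}\rho\cdot\varphi^+=\ric(X^+)\wedge\rho\cdot\varphi$. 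Moreover, the pointwise ``support'' constraints cannot force the conclusion: in an eigenframe of $\ric$, the conditions $f_j\cdot\varphi=f_j\cdot\varphi^+=0$ for $\lambda_j\neq \frac{S}{2(2r+1)}$ together with $i\rho\cdot\varphi=\frac{S}{2(2r+1)}\varphi$, $i\rho\cdot\varphi^+=-\frac{S}{2(2r+1)}\varphi^+$ reduce, on the support of $\varphi$ and $\varphi^+$, to the single scalar identity $\sum_j\lambda_j=\frac{S}{2}$, i.e.\ $\tr(\ric)=S$; they are therefore compatible with arbitrary values of the remaining eigenvalues, so no pointwise linear algebra from these identities yields $\lambda_j=0$ or $|E|=2r+1$. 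The identity \eqref{lnabrho-} does carry more information, but only through its derivative term $\nabla_{X^+}\rho$, which cannot be eliminated pointwise; and your argument in any case only applies on the open set where $\varphi$ (resp.\ $\varphi^+$) is nonzero, whereas the theorem is a statement about $\ric$ on all of $M$. Finally, the concluding ``trace seals the multiplicity'' step presupposes precisely what is still unproven, namely that $0$ and $\frac{S}{2(2r+1)}$ are the only eigenvalues.

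What actually fills this gap in the paper is a different mechanism: one introduces the forms $\rho_s=\frac{1}{2}\sum_i e_i\cdot J\ric^s(e_i)$ and the statements $(a_s)$--$(f_s)$, proves them for all $s\in\mathbb{N}$ by the induction of Lemma~\ref{impl} (whose implications, in particular $(d_s),(e_s),(f_s)\Rightarrow(a_{s+1})$ and the co-closedness statements $(f_s)$, are where the derivative identities are exploited, following \cite{am_even1} for general $r$), and then concludes from $\tr(\ric^s)=2(2r+1)\bigl(\tfrac{S}{2(2r+1)}\bigr)^s$ for every $s$ via Newton's relations that the eigenvalues are exactly $\frac{S}{2(2r+1)}$ and $0$ with the stated multiplicities. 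Your appeal to \cite{am_even1} does point at the right source --- and the paper itself defers the computations of Lemma~\ref{impl} there --- but your sketch (one differentiated identity plus support constraints) is not that argument and does not substitute for it: the induction over all powers $s$, i.e.\ the power-sum identities, is the missing idea, and without it the proposal does not prove the theorem.
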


Since the multiplicity of an eigenvalue is a positive integer, we get directly the following

\begin{Corollary}
On a K\"ahler spin manifold of constant scalar curvature, anti-holomor\-phic K\"ahlerian twistor spinors may exist only in $\s{r}$ with $r\leq\frac{m-1}{2}$ and holomorphic K\"ahlerian twistor spinors may exist only in $\s{r}$ with $r\geq\frac{m+1}{2}$.
\end{Corollary}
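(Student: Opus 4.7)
The plan is as follows. By Proposition~\ref{allsemi}, the constancy of $S$ forces every K\"ahlerian twistor spinor in $\s{r}$ (with $0<r<m$, $r\neq\tfrac{m}{2}$) to be special; since the structure $\mathfrak{j}$ interchanges $\akts(r)$ with $\hkts(m-r)$ via \eqref{strj}, it suffices to treat the case $\varphi\in\akts(r)$ with $r<\tfrac{m}{2}$, the holomorphic case then following by conjugation. Setting $k=\tfrac{1}{2(2r+1)}$ and using $dS=0$ in Proposition~\ref{formtwists}, the two key algebraic identities that hold pointwise on $M$ are
\begin{equation*}
(\ric-kS\cdot I)(X^-)\cdot\varphi=0, \qquad (\ric-kS\cdot I)(X^-)\cdot\varphi^+=0, \qquad \forall\,X\in TM,
\end{equation*}
together with the transport relation $\nabla_Y\varphi^+=-\tfrac{1}{2}\ric(Y^+)\cdot\varphi$ from \eqref{lnab}.

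Fix $p\in M$ and let $Y\in T_pM$ be a real Ricci eigenvector with $\ric(Y)=\nu Y$. If $\nu\neq kS$, the two identities immediately give $Y^-\cdot\varphi=0$ and $Y^-\cdot\varphi^+=0$ at $p$. The twistor equation then yields $\nabla_{Y^-}\varphi=-\tfrac{1}{2(r+1)}Y^-\cdot\varphi^+=0$, and combined with $\nabla_{Y^+}\varphi=0$ (anti-holomorphicity), we conclude $\nabla_Y\varphi=0$ at $p$.

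To prove that $\nu=0$, extend $Y$ to be parallel at $p$ and differentiate the second algebraic identity along $Y$; using $\nabla_Y\varphi^+=-\tfrac{\nu}{2}Y^+\cdot\varphi$ at $p$ and the Clifford anticommutator $Y^-\cdot Y^+\cdot\varphi=-|Y|^2\varphi$ (a consequence of $Y^-\cdot\varphi=0$), one obtains
\begin{equation*}
(\nabla_Y\ric)(Y^-)\cdot\varphi^+ = -\tfrac{\nu(\nu-kS)|Y|^2}{2}\,\varphi.
\end{equation*}
A parallel differentiation of the first identity, combined with $\nabla_Y\varphi=0$, yields $(\nabla_Y\ric)(Y^-)\cdot\varphi=0$. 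Following the method of Moroianu~\cite{am_even1}, one then combines these two relations with \eqref{lnabrho-}---which, specialised to the eigenvector $Y$, reads $\nabla_{Y^+}\rho\cdot\varphi^+=i\nu(\nu-kS)Y^+\cdot\varphi$---and exploits the Hermitian pairing between $\s{r}$ and $\s{r+1}$, possibly by a further differentiation or by averaging over an orthonormal frame of Ricci eigenvectors, to force $\nu(\nu-kS)=0$, hence $\nu=0$. This extraction of $\nu=0$ from the above algebraic constraints is the principal technical obstacle and the reason the proof invokes the argument in \cite{am_even1}.

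Once the pointwise eigenvalues of $\ric$ are known to lie in $\{0,kS\}$, the multiplicities are determined universally: if $s$ is the complex dimension of the $kS$-eigenspace, then $\mathrm{tr}(\ric)=S$ forces $2s\cdot kS=S$, hence $s=2r+1$, and the $0$-eigenspace has complex dimension $t=m-2r-1$. The corresponding real multiplicities are $2(2r+1)$ and $2(m-2r-1)$; in particular $t\ge 0$ requires $r\le\tfrac{m-1}{2}$, consistent with the anti-holomorphic setting. Non-negativity of both eigenvalues is immediate, as $S>0$ under our hypotheses by Remark~\ref{pos}. The symmetric conclusion for holomorphic K\"ahlerian twistor spinors then follows by applying $\mathfrak{j}$ and \eqref{strj}.
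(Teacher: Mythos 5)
Your endgame is exactly the paper's proof: the Corollary is deduced in one line from Theorem~\ref{2eigenval}, because the eigenvalue $0$ of $\ric$ has multiplicity $2(m-2r-1)$, which must be non-negative, the holomorphic case following via $\mathfrak{j}$ and \eqref{strj}. Your trace count ($2s\cdot kS=S$, hence $s=2r+1$ and $m-2r-1\ge 0$, with $S>0$ guaranteed by Remark~\ref{pos}) is correct, so if you simply cited Theorem~\ref{2eigenval} for the fact that the only Ricci eigenvalues are $0$ and $kS$, your argument would be complete and would coincide with the paper's.

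The problem is that you instead attempt to re-derive that fact, and the central step of your re-derivation is missing. Your two differentiated identities at $p$, namely $(\nabla_Y\ric)(Y^-)\cdot\varphi^+=-\tfrac{1}{2}\nu(\nu-kS)|Y|^2\varphi$ and $(\nabla_Y\ric)(Y^-)\cdot\varphi=0$, are correctly computed, but they do not force $\nu(\nu-kS)=0$: nothing makes the left-hand side of the first relation vanish or be orthogonal to $\varphi$ (Clifford multiplication by the $(0,1)$-vector $(\nabla_Y\ric)(Y^-)$ maps $\s{r+1}$ to $\s{r}$, so the identity is type-consistent and merely expresses an unknown first-derivative term of $\ric$ in terms of $\nu$); pairing with $\varphi$ only trades it for $\langle\varphi^+,(\nabla_Y\ric)(Y^+)\cdot\varphi\rangle$, about which you have no information. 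The phrase ``possibly by a further differentiation or by averaging over an orthonormal frame of Ricci eigenvectors'' is not an argument, and the method of \cite{am_even1} that you invoke is structurally different from what you sketch: it is the induction on the statements $(a_s)$--$(f_s)$ for the $2$-forms $\rho_s=\frac{1}{2}\sum_i e_i\wedge J\ric^s(e_i)$, which yields $\tr(\ric^s)=2(2r+1)K^s$ for every $s\in\mathbb{N}$ and only then identifies the eigenvalues via Newton's relations; it cannot be compressed into one or two pointwise differentiations of \eqref{lric} and \eqref{lnabrho-}. So either quote Theorem~\ref{2eigenval} directly, or carry out that full inductive scheme; as written, the decisive step is a gap.
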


\begin{Remark}
\normalfont{
In the extremal cases, if $m$ is odd and $r=\frac{m\pm1}{2}$, then the existence of a holomorphic (respectively anti-holomorphic) K\"ahlerian twistor spinor $\varphi\in\Gamma(\s{r})$ implies that the Ricci tensor only has one eigenvalue, thus proving that the manifold must be K\"ahler-Einstein. As we move away from the middle dimension the multiplicity of the eigenvalue $0$ of the Ricci tensor grows and the manifold is actually the product of a K\"ahler-Einstein manifold and a Ricci-flat one, if $M$ is supposed to be simply-connected (cf. Theorem~\ref{class}) below.}
\end{Remark}

The proof of Theorem \ref{2eigenval} follows from Lemmas \ref{form1} and \ref{impl}. It is enough to consider anti-holomorphic K\"ahlerian twistor spinors, since the holomorphic ones are obtained by applying the canonical $\mathbb{C}$-anti-linear quaternionic or real structure $\mathfrak{j}$ to the anti-holomorphic ones.

\begin{Lemma}\label{form1}
If $\varphi\in\Gamma(\s{r})$ is an anti-holomorphic K\"ahlerian twistor spinor, then the following formulas hold (with the notation $K=kS=\frac{S}{2(2r+1)}$):
\begin{equation}\label{nab1}
\nabla_X\varphi^+=-\frac{1}{2}\ric(X^+)\cdot\varphi,
\end{equation}
\begin{equation}\label{ric1}
\ric(X^-)\cdot\varphi=K X^-\cdot\varphi, \quad \quad \ric(X^-)\cdot \varphi^+=K X^-\cdot\varphi^+,
\end{equation}
\begin{equation}\label{rho1}
i\rho\cdot\varphi=K\varphi, \quad\quad i\rho\cdot\varphi^+=-K\varphi^+,
\end{equation}
\begin{equation}\label{nab+rho1}
\nabla_{X}\rho\cdot\varphi=0, \quad\quad i\nabla_{X}\rho\cdot\varphi^+=-(\ric^2(X^+)-K\ric(X^+))\cdot\varphi.
\end{equation}
\end{Lemma}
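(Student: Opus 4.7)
The plan is to deduce Lemma \ref{form1} from Proposition \ref{formtwists} by specializing to the case of constant scalar curvature, which is the standing assumption of Section \ref{eigenric}. Under this hypothesis $dS \equiv 0$, so every correction term in the formulas of Proposition \ref{formtwists} involving $(dS)^{\pm}$, $X^{\pm}(S)$, or $\nabla_{X^{\pm}}(dS)^{\pm}$ vanishes identically, and the identities of Lemma \ref{form1} drop out essentially by inspection.

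More precisely, \eqref{nab1} is \eqref{lnab}, and the first equalities in \eqref{ric1} and \eqref{rho1} are \eqref{lric} and \eqref{lrho}; all three hold without any assumption on $S$. The second equality in \eqref{ric1} specializes \eqref{lric-} once the $X^{-}(S)$ term is discarded, and the second equality in \eqref{rho1} specializes \eqref{lrho-} once the $(dS)^{+}$ term is discarded. The identity $i\nabla_{X^{+}}\rho\cdot\varphi = 0$ is exactly \eqref{lnab-rho} after setting $X^{+}(S) = 0$. The second formula in \eqref{nab+rho1} splits into a $(1,0)$-piece that follows from \eqref{lnabrho-} upon killing the $\nabla_{X^{+}}(dS)^{+}$ and $X^{+}(S)$ terms, and a $(0,1)$-piece that is zero since \eqref{lnabrho+} becomes identically zero under $dS = 0$.

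The only step requiring more than pure substitution is the vanishing $i\nabla_{X^{-}}\rho\cdot\varphi = 0$, needed to assemble the full statement $\nabla_{X}\rho\cdot\varphi = 0$ for every $X$. For this I would return to equation \eqref{phi-}, originally derived in \S \ref{sectcurv} from the second component of the curvature of the twistor connection. Imposing $\varphi^{-} = 0$ (anti-holomorphicity), $Y^{-}(S) = 0$, and $(dS)^{+} = 0$ collapses the right-hand side of \eqref{phi-} to a scalar multiple of $\bigl[KY^{-} - \ric(Y^{-})\bigr]\cdot\varphi^{+}$, and this expression annihilates $\varphi^{+}$ by the second identity of \eqref{ric1}. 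This final cancellation is the only substantive move in the proof; since it relies on \eqref{ric1}, the identities should be established in the order in which they are listed. No curvature computations beyond those already performed in \S \ref{sectcurv} will be needed, and the main obstacle, if one can call it that, is merely the bookkeeping of which terms in Proposition \ref{formtwists} vanish and which feed into which subsequent identity.
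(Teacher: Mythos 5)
Your proposal is correct and follows essentially the same route as the paper, which deduces Lemma~\ref{form1} by specializing the formulas \eqref{lnab}--\eqref{lnabrho+} of Proposition~\ref{formtwists} to constant scalar curvature. Your extra care with the $(0,1)$-part $i\nabla_{X^-}\rho\cdot\varphi=0$, obtained from \eqref{phi-} together with the second identity of \eqref{ric1}, fills in precisely the computation the paper leaves implicit (it appears already in the proof of Proposition~\ref{formtwists}).
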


\begin{proof}
These relations follow directly from the general formulas \eqref{lnab} - \eqref{lnabrho+} for anti-holomorphic K\"ahlerian twistor spinors by using the fact that the scalar curvature $S$ is constant. \qed\\
\end{proof}

Let us consider as in \cite{am_even1} the $2$-forms

\[\rho_s:=\frac{1}{2}\sum_{i=1}^{n}e_i\wedge J\ric^s(e_i)=\frac{1}{2}\sum_{i=1}^{n}e_i\cdot J\ric^s(e_i).\]

and the following statements:\\

$(a_s)\quad$ $tr(\ric^s)=2(2r+1)K^s;$

$(b_s)\quad$ $i\rho_s\cdot\varphi=K^s\varphi;$

$(c_s)\quad$ $i\rho_s\cdot\varphi^+=-K^s\varphi^+;$

$(d_s)\quad$ $\nabla_X\rho_s\cdot\varphi=0;$

$(e_s)\quad$ $i\nabla_X\rho_s\cdot\varphi^+=-(\ric^{s+1}(X^+)-K^s\ric(X^+))\cdot\varphi;$

$(f_s)\quad$ $\delta\rho_s=0.$\\

We show by induction that these statements hold for all $s\in \mathbb{N}$. From Lemma \ref{form1} it follows that they are true for $s=1$ (constant scalar curvature implies $\delta\rho=-\frac{1}{2}JdS=0$). We notice that the following result obtained by A. Moroianu in \cite{am_even1} for $r= \frac {m}{2}+1$ is true for any $0 <r<m$:

\begin{Lemma}\label{impl}
The following implications hold:
\begin{enumerate}
	\item $(a_s) \Rightarrow (b_s), (c_s);$
	\item $(b_s), (c_s) \Rightarrow (d_s), (e_s);$
	\item $(a_s), (f_{s-1}) \Rightarrow (f_s);$
	\item $(d_s), (e_s), (f_s)\Rightarrow (a_{s+1}).$
\end{enumerate}
\end{Lemma}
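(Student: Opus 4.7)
The plan is to verify the four implications of the lemma by pointwise Clifford computations using the defining equations \eqref{defec} of $\varphi$ and the identities collected in Lemma \ref{form1}.

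For implication (1), the key is to work at a point in a $J$-adapted orthonormal basis that diagonalizes $\ric$ with eigenvalues $\mu_1,\ldots,\mu_m$. The commuting involutions $a_j := ie_j\cdot Je_j$ then simultaneously diagonalize $\s{r}$ into joint eigenspinors $\varphi_I$ indexed by subsets $I \subset \{1,\ldots,m\}$ of size $m-r$, on which $i\rho_s\cdot\varphi_I = \bigl(\sum_{j\in I}\mu_j^s - \sum_{j\notin I}\mu_j^s\bigr)\varphi_I$. Equation \eqref{ric1} from Lemma \ref{form1} reads $(\mu_j - K)e_j^-\cdot\varphi = 0$ in this basis, and since $e_j^-\cdot\varphi_I$ is nonzero precisely when $j\notin I$, the set $J := \{j : \mu_j \ne K\}$ must be contained in every $I$ with $c_I \ne 0$ in $\varphi$. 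Combining this combinatorial support restriction with the trace conditions $(a_1)$ and $(a_s)$ shows $\sum_{j\in I}\mu_j^s = (r+1)K^s$ on each such $I$, which is exactly $(b_s)$. The derivation of $(c_s)$ on $\varphi^+$ is analogous, using the companion equation in \eqref{ric1} and the fact that the joint eigenspinors in $\s{r+1}$ correspond to subsets of size $m-r-1$.

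Implication (2) is purely calculational. Covariantly differentiating $(b_s)$ and substituting the twistor equations $\nabla_{X^+}\varphi = 0$, $\nabla_{X^-}\varphi = -\frac{1}{2(r+1)}X^-\cdot\varphi^+$, together with the Clifford commutator $[\rho_s, X] = 2J\ric^s(X)$, the identity $(c_s)$, and the iterated relation $\ric^s(X^-)\cdot\varphi^+ = K^s X^-\cdot\varphi^+$ (obtained by iterating the second equation of \eqref{ric1}), all cross-terms collapse and one reads off $(d_s)$. An analogous differentiation of $(c_s)$, now invoking $(b_s)$, the identity $\nabla_X\varphi^+ = -\frac{1}{2}\ric(X^+)\cdot\varphi$ from \eqref{nab1}, and the commutator $[i\rho_s, \ric(X^+)] = -2\ric^{s+1}(X^+)$, produces $(e_s)$. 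For (3), compute $\delta\rho_s$ in a local parallel frame via $\delta\rho_s(Y) = -g\bigl(J\sum_i (\nabla_{e_i}\ric^s)(e_i), Y\bigr)$, expand $\nabla\ric^s$ by the product rule $\nabla\ric^s = \sum_{k=0}^{s-1}\ric^k\circ\nabla\ric\circ\ric^{s-1-k}$, and apply the twice-contracted Bianchi identity together with $(f_{s-1})$ and the constancy of $\tr\ric^s$ furnished by $(a_s)$; the resulting terms cancel pairwise to give $\delta\rho_s = 0$.

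Finally, for implication (4), contract $(e_s)$ by applying $\sum_j e_j\cdot$ in the $X$-slot. The left-hand side becomes $i\sum_j e_j\cdot\nabla_{e_j}\rho_s\cdot\varphi^+$, which splits into an exterior-derivative piece that is controlled using $(d_s)$ and a codifferential piece killed by $(f_s)$. The right-hand side contracts via the trace identities \eqref{contr1}--\eqref{contr3} applied to $\ric^{s+1}$ and $\ric$ into a linear combination of $\tr(\ric^{s+1})\varphi$, $i\rho_{s+1}\cdot\varphi$, and $K^s$ times the analogous expressions for $\ric$; taking the Hermitian inner product with $\varphi$ and invoking $(b_s)$ together with $(b_1)$ reduces everything to the scalar identity $\tr(\ric^{s+1}) = 2(2r+1)K^{s+1}$, which is $(a_{s+1})$. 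The main obstacle is implication (4), where carefully tracking the Clifford contractions and the cancellations between the K\"ahler-type forms $\rho_{s+1}$ and the traces of $\ric^{s+1}$ requires the joint deployment of $(d_s)$, $(f_s)$, $(b_s)$, and $(b_1)$; implication (1) is conceptually the most original step, since it leverages the pointwise algebraic constraint imposed by \eqref{ric1} to pin down the support of $\varphi$ in the Ricci eigenbasis.
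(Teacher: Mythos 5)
Your outlines of implications (1) and (2) are essentially correct (in (1) only $(a_s)$ is needed, not $(a_1)$, and the commutator identities you quote in (2) are the right ones), but the argument you give for implication (3) has a genuine gap. Writing $\delta\rho_s(W)=\pm\sum_{k=0}^{s-1}T_k(\ric^{\,s-1-k}W)$ with $T_k(W):=\sum_i(\nabla_{e_i}\ric)(\ric^k e_i,W)$, the ingredients you invoke do not make these terms cancel: the twice-contracted Bianchi identity only kills $T_0$, the constancy of $\tr(\ric^s)$ from $(a_s)$ only gives the \emph{gradient-type} contraction $\langle\nabla_W\ric,\ric^{s-1}\rangle=0$, which is a different contraction of $\nabla\ric$ than the divergence-type $T_{s-1}$, and $(f_{s-1})$ gives a single linear combination of the $T_k$'s contracted in yet another way. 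Indeed, for a general divergence-free symmetric tensor $A$ with $\tr(A^j)$ constant, $\mathrm{div}(A^s)=0$ is false, so no purely Riemannian bookkeeping can work. The missing ingredient is K\"ahler-specific: the closedness of the Ricci form. Contracting the cyclic identity for $d\rho=0$ (using that $\nabla\ric$ commutes with $J$) against $\ric^k$ gives $2T_k=\frac{1}{k+1}\,d\,\tr(\ric^{k+1})$, so that $(a_1),\dots,(a_s)$ force every $T_k=0$ and hence $\delta\rho_s=0$. Since you never use $d\rho=0$, step (3) as written does not go through.

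Implication (4) also has a soft spot: after contracting $(e_s)$ with $\sum_j e_j\cdot$ the right-hand side contains, via the analogue of \eqref{contr3} for $\ric^{s+1}$, the term $i\rho_{s+1}\cdot\varphi$, and neither $(b_s)$ nor $(b_1)$ says anything about $\rho_{s+1}$. This term must be evaluated by the same pointwise eigenbasis/support argument as in your step (1) (equivalently, by iterating \eqref{ric1}, which gives $\ric^{s+1}(X^-)\cdot\varphi=K^{s+1}X^-\cdot\varphi$); one finds $i\rho_{s+1}\cdot\varphi=\bigl(\tfrac12\tr(\ric^{s+1})-2rK^{s+1}\bigr)\varphi$, and only after inserting this, using $(a_1)$, $(b_1)$, and killing the $d\rho_s$-term through $(d_s)$ and $(f_s)$ after pairing with $\varphi$, does $\tr(\ric^{s+1})=2(2r+1)K^{s+1}$ drop out on the set where $\varphi\neq 0$ (which is dense, since a K\"ahlerian twistor spinor vanishing on an open set is identically zero by \S\ref{sectktc}). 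Note, finally, that the paper itself gives no computations for this lemma but defers them to Moroianu's article \cite{am_even1}, so the standard of comparison is that proof; your plan follows its spirit for (1), (2) and (4), but (3) needs the K\"ahler identity described above, and (4) needs the explicit evaluation of $i\rho_{s+1}\cdot\varphi$.
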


\begin{proof}
Except for the modifications implied by the new values that $r$ may take, the proof given in \cite{am_even1}  works in the same way. Hence, we do not give here the corresponding computations.\qed
\end{proof}

The formulas $(a_s)$ show that the sum of the $s^{th}$ powers of the eigenvalues of $\ric$ equals $2(2r+1)K^s$ for all $s$ and by Newton's relations this proves Theorem~\ref{2eigenval}.

\subsection{K\"ahler-Einstein Manifolds}\label{kecase}

In this section we show that on a K\"ahler-Einstein manifold there may only exist non-extremal K\"ahlerian twistor spinors if $m$ is odd. They must be in $\s{\frac{m-1}{2}}$ or $\s{\frac{m+1}{2}}$ and are automatically K\"ahlerian Killing spinors. Thus such a manifold is a limiting manifold for the Kirchberg's inequality \eqref{kirchodd}. These manifolds have been geometrically described by A.~Moroianu (cf. Theorem~\ref{classodd}). 

Let $(M,g,J)$ be a K\"ahler-Einstein manifold. Then $S$ is constant and by Proposition~\ref{allsemi} it follows that all K\"ahlerian twistor spinors are special K\"ahlerian twistor spinors: if $0\leq r\leq\frac{m}{2}$, then they must be anti-holomorphic K\"ahlerian twistor spinors and if $\frac{m}{2}\leq r\leq m$, then they must be holomorphic K\"ahlerian twistor spinors. As usually, it is sufficient to consider anti-holomorphic K\"ahlerian twistor spinors  $\varphi\in\Gamma(\s{r})$ for a fixed $r$ with $0<r<\frac{m}{2}$.

As $\rho=\frac{1}{2m}S\Omega$, it follows that $i\rho\cdot\varphi=\frac{´m-2r}{2m}S\varphi$ and from \eqref{eta} we get
\begin{equation}\label{eigend2}
D^2\varphi=\frac{m^2+m-2mr+2r^2}{2m(m+1)}S\varphi.
\end{equation}

On the other hand, by Proposition~\ref{eigentyper}, any anti-holomorphic K\"ahlerian twistor spinor in $\s{r}$ (for $0\leq r<\frac{m}{2}$) is an eigenspinor of $D^2$ with the smallest possible eigenvalue on $\s{r}$:
\begin{equation}\label{eigenmic}
D^2\varphi=\frac{(r+1)S}{2(2r+1)}\varphi.
\end{equation}

Comparing the eigenvalues in \eqref{eigend2} and \eqref{eigenmic}, we get for $r\leq \frac{m-1}{2}$:
\[\frac{(r+1)S}{2(2r+1)}=\frac{(m^2+m-2mr+2r^2)S}{2m(m+1)},\]
which, since $S\neq 0$, is equivalent to $0=-r(m-2r)(m-2r-1)$.

As $r\neq 0$ and $r\neq\frac{m}{2}$, it follows that the only possible value for $r$ is $\frac{m-1}{2}$. Thus, except for parallel spinors (if $S=0$) and extremal spinors (those in $\s{0}$), anti-holomorphic K\"ahlerian twistor spinors on a K\"ahler-Einstein manifold can only exist in $\s{\frac{m-1}{2}}$ and they are by definition exactly the K\"ahlerian Killing spinors. A similar result is true for holomorphic K\"ahlerian twistor spinors and it might be obtained by considering the isomorphism \eqref{strj} given by the quaternionic or real structure $\mathfrak{j}$.

Concluding, we have proven the following

\begin{Proposition}\label{ke}
On a spin K\"ahler-Einstein manifold the only nontrivial K\"ahlerian twistor spinors are the extremal ones in $\s{0}$ and $\s{m}$, the K\"ahlerian Killing spinors in $\s{\frac{m-1}{2}}$ and $\s{\frac{m+1}{2}}$ (if $m$ is odd) and the parallel ones (if $M$ is Ricci-flat). 
\end{Proposition}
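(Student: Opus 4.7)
The plan is to reduce the problem to comparing two different expressions for the eigenvalue of $D^2$ on a special K\"ahlerian twistor spinor, exploiting the fact that on a K\"ahler-Einstein manifold the Ricci form is a constant multiple of the K\"ahler form.

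First, since a K\"ahler-Einstein manifold has constant scalar curvature, Proposition~\ref{allsemi} immediately restricts attention to special K\"ahlerian twistor spinors: anti-holomorphic ones if $0\le r\le m/2$ and holomorphic ones if $m/2\le r\le m$. Since the $\mathbb{C}$-anti-linear structure $\mathfrak{j}$ interchanges these two classes via \eqref{strj}, it suffices to analyze an anti-holomorphic K\"ahlerian twistor spinor $\varphi\in\Gamma(\s{r})$ with $0<r<m/2$.

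Second, I would compute $D^{2}\varphi$ in two independent ways. On the one hand, $\rho=\tfrac{S}{2m}\Omega$, and since $\varphi$ lies in the $i(2r-m)$-eigenbundle of $\Omega$, one gets $i\rho\cdot\varphi=\tfrac{m-2r}{2m}S\,\varphi$; substituting into the general curvature identity \eqref{eta} yields
\begin{equation*}
D^{2}\varphi=\frac{m^{2}+m-2mr+2r^{2}}{2m(m+1)}\,S\,\varphi.
\end{equation*}
On the other hand, as an anti-holomorphic K\"ahlerian twistor spinor, $\varphi$ is an eigenspinor of $D^{2}$ with eigenvalue $\tfrac{(r+1)S}{2(2r+1)}$ by the identity \eqref{semieigen} (equivalently, by Proposition~\ref{eigentyper}).

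Third, assuming $\varphi$ is not parallel, the scalar curvature must be nonzero: otherwise the Weitzenb\"ock identity \eqref{wb} forces both $D^{+}\varphi=0$ and $D^{-}\varphi=0$, hence $\nabla\varphi=0$ as in Remark~\ref{pos}. Equating the two eigenvalue expressions and clearing denominators gives a polynomial relation in $r$ that factors as $r(m-2r)(m-2r-1)=0$. Since $r\neq 0$ and $r\neq m/2$ in the range under consideration, this forces $r=(m-1)/2$, which requires $m$ odd; in that case the defining equations \eqref{defec} reduce exactly to the K\"ahlerian Killing equations \eqref{ecodd}. The analogous conclusion $r=(m+1)/2$ for holomorphic spinors follows by applying $\mathfrak{j}$. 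The extremal cases $r=0,m$ are handled in \S\ref{extrcases}, and the Ricci-flat case covers the parallel spinors.

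The only real obstacle is bookkeeping: one must carefully check that the polynomial factorization is correct and that the boundary cases ($r=0$, $r=m/2$, $m$ even, and $S=0$) are all either excluded or identified with the exceptional solutions listed in the statement. No new geometric input is needed beyond the formulas already derived in \S\ref{sectcurv}.
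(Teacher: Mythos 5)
Your proposal is correct and follows essentially the same route as the paper: reduce to anti-holomorphic special K\"ahlerian twistor spinors via Proposition~\ref{allsemi} and $\mathfrak{j}$, compute $D^{2}\varphi$ once from \eqref{eta} with $\rho=\tfrac{S}{2m}\Omega$ and once from \eqref{semieigen}/Proposition~\ref{eigentyper}, and compare to get $r(m-2r)(m-2r-1)=0$, hence $r=\tfrac{m-1}{2}$ and the K\"ahlerian Killing equation. The treatment of the boundary cases ($S=0$, $r=0,m$, $r=\tfrac{m}{2}$) also matches the paper's.
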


Combining Proposition~\ref{ke} with the characterization in Proposition~\ref{charactodd} of the limiting mani\-folds of Kirchberg's inequality and their geometric  description given by A.~Moroianu (Theorem~\ref{classodd}) we obtain

\begin{Theorem}\label{classke}
A spin K\"ahler-Einstein manifold admitting nontrivial and non-extremal K\"ahlerian twistor spinors which are not parallel is either $\mathbb{C}P^{4k+1}$ or, in complex dimension $4k+3$, a twistor space over a quaternionic K\"ahler manifold of positive scalar curvature.
\end{Theorem}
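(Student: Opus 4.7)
The plan is to chain together three results already established in the paper: Proposition~\ref{ke}, Theorem~\ref{charactodd}, and Theorem~\ref{classodd}. Let $(M,g,J)$ be a spin K\"ahler-Einstein manifold of complex dimension $m$ admitting a nontrivial K\"ahlerian twistor spinor $\varphi\in\Gamma(\s{r})$ that is neither extremal (i.e.\ not in $\s{0}$ or $\s{m}$) nor parallel. The first step is to invoke Proposition~\ref{ke}, which gives an exhaustive list of the K\"ahlerian twistor spinors on such a manifold; the hypotheses on $\varphi$ eliminate the extremal summands as well as the parallel case (which occurs only when $M$ is Ricci-flat). Only one possibility remains: $m$ must be odd and $\varphi$ must lie in $\s{\frac{m-1}{2}}$ or $\s{\frac{m+1}{2}}$, being a K\"ahlerian Killing spinor.

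The second step is to apply the converse direction of Theorem~\ref{charactodd}: since $\varphi$ is a nontrivial K\"ahlerian Killing spinor on a compact spin K\"ahler manifold, $(M,g,J)$ must be a limiting manifold for Kirchberg's inequality~\eqref{kirchodd}, with $\varphi$ realizing an eigenspinor for the smallest eigenvalue $\lodd$. The third and final step is to quote A.~Moroianu's geometric classification of such limiting manifolds (Theorem~\ref{classodd}), which forces $M$ to be either $\mathbb{C}P^{4k+1}$ in complex dimension $m=4k+1$, or a twistor space over a quaternionic K\"ahler manifold of positive scalar curvature in complex dimension $m=4k+3$.

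Since all three ingredients are already in hand, no new computation is required; the theorem is essentially a synthesis. The genuine obstacle has already been overcome inside Proposition~\ref{ke}, where the comparison between the two expressions~\eqref{eigend2} and~\eqref{eigenmic} for $D^2\varphi$ cuts down the admissible value of $r$ to $\frac{m-1}{2}$ (dually $\frac{m+1}{2}$) and hence forces $m$ to be odd. The only thing I would take care to verify explicitly is that the two hypotheses ``non-extremal'' and ``non-parallel'' together rule out exactly the other entries of the list in Proposition~\ref{ke}, so that the K\"ahlerian Killing spinor case is the unique surviving possibility and Theorem~\ref{charactodd} applies unambiguously.
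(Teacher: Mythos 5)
Your proposal is correct and coincides with the paper's own argument: the paper proves Theorem~\ref{classke} precisely by combining Proposition~\ref{ke} (which, under the non-extremal and non-parallel hypotheses, leaves only the K\"ahlerian Killing spinor case in $\s{\frac{m\pm1}{2}}$ with $m$ odd) with the converse direction of Theorem~\ref{charactodd} and Moroianu's classification in Theorem~\ref{classodd}. Your closing check that the two hypotheses eliminate exactly the remaining entries of the list is the only verification needed, and it goes through as you describe.
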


\begin{Example}[The complex projective space]
The dimension of the space of K\"ahlerian Killing spinors on $\mathbb{C}P^{m}$ with $m=2k-1$ is $\binom{2k}{k}$(\emph{cf.} \cite{kirch}).
\end{Example}

\begin{Corollary}(cf. \cite{am_th})\label{cordim1}
Let $(M,g,J)$ be a K\"ahler-Einstein manifold admitting nontrivial non-extremal K\"ahlerian twistor spinors, which is not the complex projective space, then the dimension of their space is $2$. More precisely:
\[\dim_{\mathbb{C}}(\kts(\frac{m-1}{2}))=\dim_{\mathbb{C}}(\kts(\frac{m+1}{2}))=1.\]
\end{Corollary}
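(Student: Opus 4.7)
The argument proceeds in three steps: reduction to the anti-holomorphic case, geometric identification of $M$, and a dimension count on the twistor space.

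First, by Proposition~\ref{ke}, any nontrivial non-extremal and non-parallel K\"ahlerian twistor spinor on a K\"ahler-Einstein spin manifold is necessarily a K\"ahlerian Killing spinor and hence lies in $\s{\frac{m-1}{2}}\oplus\s{\frac{m+1}{2}}$ (in particular $m$ must be odd). The $\mathbb{C}$-antilinear real or quaternionic structure $\mathfrak{j}$ on $\sigm$ maps $\s{r}$ to $\s{m-r}$ and restricts, by \eqref{strj}, to a conjugate-linear isomorphism $\akts(\frac{m-1}{2})\stackrel{\sim}{\to}\hkts(\frac{m+1}{2})$. Thus $\dim_{\mathbb{C}}\kts(\frac{m-1}{2})=\dim_{\mathbb{C}}\kts(\frac{m+1}{2})$, and it suffices to prove $\dim_{\mathbb{C}}\akts(\frac{m-1}{2})=1$.

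Next, Theorem~\ref{classke} forces such a manifold $M$ to be either $\mathbb{C}P^{4k+1}$ or, in complex dimension $m=4k+3$, the twistor space $Z\to N$ of a compact quaternionic K\"ahler manifold $N$ of positive scalar curvature. The hypothesis excludes complex projective spaces; in particular it also rules out the case $N=\mathbb{H}P^\ell$, since then $Z=\mathbb{C}P^{2\ell+1}$ is again a projective space. Hence $M=Z$ is the twistor space over a positive quaternionic K\"ahler base different from $\mathbb{H}P^\ell$.

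Finally, by Proposition~\ref{1-1} and its reduction to the bundle $\s{r}\oplus\s{r+1}\oplus\s{r-1}$ via the connection \eqref{matrix}, anti-holomorphic K\"ahlerian twistor spinors in $\s{\frac{m-1}{2}}$ on the K\"ahler-Einstein manifold $Z$ are in bijection with parallel sections of the K\"ahlerian twistor connection $\tilde{\nabla}$. Their complex dimension equals the dimension of the fixed subspace of the holonomy representation of $\tilde{\nabla}$. The plan is to exploit the twistor fibration $Z\to N$: the holonomy of $Z$ reduces according to the $Sp(\ell)Sp(1)$-holonomy of the quaternionic K\"ahler base, and the holonomy-invariant subspace of the fiber of $\s{\frac{m-1}{2}}\oplus\s{\frac{m+1}{2}}\oplus\s{\frac{m-3}{2}}$ is identified with the $Sp(1)$-singlet inside an appropriate irreducible component of the spinor module of $\mathrm{Spin}(2m)$. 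This $Sp(1)$-invariant line is one-dimensional, yielding $\dim_{\mathbb{C}}\akts(\frac{m-1}{2})=1$. An entirely equivalent route is to quote the explicit case-by-case computation performed by A.~Moroianu in his thesis \cite{am_th}, where this dimension is determined for each Wolf space.

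\textbf{Main obstacle.} The decisive step is the last one: producing an intrinsic, structural reason why the holonomy-invariant subspace has dimension exactly $1$ on a twistor space but dimension $\binom{2k}{k}$ on $\mathbb{C}P^{2k-1}$. The key geometric feature that makes this possible is the $Sp(1)$-part of the quaternionic K\"ahler holonomy of the base $N$, whose action picks out a unique invariant line (up to scale) in the relevant spinor module; without this extra $Sp(1)$-factor, as in the flat cone of the projective space, no such reduction occurs and the count is a binomial coefficient rather than $1$. Making this reduction rigorous - either through a direct representation-theoretic argument on the spinor module, or through the Hitchin twistor construction that realises K\"ahlerian Killing spinors on $Z$ as sections of an $Sp(1)$-equivariant line bundle pulled back from $N$ - is the heart of the proof.
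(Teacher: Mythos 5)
Your first two steps coincide with what the paper does: Proposition~\ref{ke} reduces non-extremal, non-parallel K\"ahlerian twistor spinors on a K\"ahler--Einstein manifold to K\"ahlerian Killing spinors in $\s{\frac{m-1}{2}}\oplus\s{\frac{m+1}{2}}$, Theorem~\ref{classke} identifies $M$ as a twistor space once projective spaces are excluded (and your observation that $N=\mathbb{H}P^{\ell}$ must also be discarded, since its twistor space is again a projective space, is correct), and the conjugate-linear isomorphism \eqref{strj} gives $\dim_{\mathbb{C}}\kts(\frac{m-1}{2})=\dim_{\mathbb{C}}\kts(\frac{m+1}{2})$. For the remaining dimension count the paper offers no independent argument either: the corollary is stated as ``cf.~\cite{am_th}'', i.e.\ the value $1$ is quoted from Moroianu's thesis. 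So your fallback route is literally the paper's treatment.

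The problem is the route you present as the actual plan, and you flag it yourself: the count of parallel sections of $\tilde{\nabla}$ is never performed. As sketched it would not go through in the form stated. The relevant holonomy is that of the auxiliary connection $\tilde{\nabla}$ on $\s{r}\oplus\s{r+1}\oplus\s{r-1}$, not the Riemannian holonomy of $Z$, and in any case the Riemannian holonomy of the K\"ahler--Einstein twistor space is the full $U(m)$; it does not ``reduce according to the $Sp(\ell)Sp(1)$-holonomy of the base'', and there is no ready-made $Sp(1)$-singlet sitting inside the fibre of $\sigm$ over $Z$ whose dimension one can simply read off. The computations that actually produce the answer (\cite{am_th}, see also Kirchberg--Semmelmann \cite{ku}) exploit the complex contact structure of $Z$, equivalently pass to the associated $3$-Sasakian Konishi bundle and its hyperk\"ahler cone, where the K\"ahlerian Killing spinors are identified with suitable parallel spinors and the reduced cone holonomy (as opposed to the flat cone relevant for $\mathbb{C}P^{2k-1}$) is what brings the count down to $1$ in each of $\s{\frac{m\pm1}{2}}$ while leaving $\binom{2k}{k}$ on projective space. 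Either carry out that construction, or accept that your proof, like the paper's, is a reduction plus a citation of \cite{am_th}; the intermediate ``intrinsic'' argument as written is a gap, not a proof.
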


\subsection{K\"ahlerian Twistor Spinors on K\"ahler Products}

We now study K\"ahlerian twistor spinors on a product of compact spin K\"ahler manifolds and show that they are defined by parallel spinors on one of the factors and special K\"ahlerian twistor spinors on the other factor. For twistor forms a similar result was obtained by A. Moroianu and U. Semmelmann in \cite{ms1}. They showed that twistor forms on a product of compact Riemannian manifolds are defined by Killing forms on the factors. 

Let $M=M_1\times M_2$ be the product of two compact spin K\"ahler manifolds of dimensions $2m$ and $2n$ respectively. Then $M$ is also a spin K\"ahler manifold and its induced spinor bundle is identified with the tensor product of the spinor bundles of the factors: 
\[\sigm\cong \sigm_1\otimes\sigm_2,\]

\noindent with the Clifford multiplication given by:

\[(X_1+X_2)\cdot(\psi_1\otimes\psi_2)=X_1\cdot\psi_1\otimes\psi_2 + \bar{\psi_1}\otimes X_2\cdot\psi_2,\]

\noindent where $\bar\psi$ is the conjugate of the spinor with respect to the decomposition $\sigm_1=\Sigma^+ M_1\oplus\Sigma^- M_1$ given by \eqref{dec+-}. 

We consider the decompositions of the spinor bundles of $M_1$ and $M_2$ with respect to their K\"ahler forms $\Omega_1$, $\Omega_2$ (Lemma~\ref{decompsp}): $\sigm_1=\oplus_{k=0}^{m}\Sigma_k M_1$, $\sigm_2=\oplus_{l=0}^{n}\Sigma_l M_2$. Then the cor\-responding decomposition of $\sigm$ into eigenbundles of $\Omega=\Omega_1+\Omega_2$ is:

\begin{equation}\label{dec1}
\sigm=\oplus_{r=0}^{m+n}\s{r},
\end{equation}
with
\begin{equation}\label{dec2}
\s{r}\cong\oplus_{k=0}^{r}\Sigma_k M_1\otimes\Sigma_{r-k} M_2,
\end{equation}

\noindent since the K\"ahler form $\Omega$ acts on a section of $\Sigma_k M_1\otimes\Sigma_{r-k} M_2$ as:

$\Omega\cdot(\psi_1\otimes\psi_2)=(\Omega_1+\Omega_2)\cdot(\psi_1\otimes\psi_2)=\Omega_1\cdot\psi_1\otimes\psi_2+ \psi_1\otimes\Omega_2\cdot\psi_2=i(2r-m-n)\psi_1\otimes\psi_2$.

Let us define the differential operators:
\[D_1^+=\sum_{i=1}^{2m}e_i^+\cdot\nabla_{e_i^-}, \quad D_2^+=\sum_{j=1}^{2n}f_j^+\cdot\nabla_{f_j^-},\]

\noindent where $\{e_i\}_{i=1,\ldots,2m}$ and $\{f_j\}_{j=1,\ldots,2n}$ denote local orthonormal basis of the tangent distributions to $M_1$, respectively $M_2$. Their adjoints are
\[D_1^-=\sum_{i=1}^{2m}e_i^-\cdot\nabla_{e_i^+}, \quad D_2^-=\sum_{j=1}^{2n}f_j^-\cdot\nabla_{f_j^+}.\]

The following relations are straightforward:
\[D^+=D_1^+ +D_2^+, \quad D^-=D_1^- +D_2^-,\]
\[(D_1^+)^2=(D_2^+)^2=(D_1^-)^2=(D_2^-)^2=0,\]
\[D_1^+ D_2^+ + D_2^+ D_1^+=D_1^- D_2^- + D_2^- D_1^-=0,\]
\[ D_1^+ D_2^- + D_2^- D_1^+=D_1^- D_2^+ + D_2^+ D_1^-=0.\]

We may suppose without loss of generality that one of the factors $M_1$ or $M_2$ is not Ricci-flat. Otherwise, $M=M_1\times M_2$ is Ricci-flat and by the Lichnerowicz formula K\"ahlerian twistor spinors are parallel.

\begin{Theorem}\label{product}
Let $M=M_1\times M_2$ be the product of two compact spin K\"ahler manifolds of dimensions $2m$, respectively $2n$ and suppose that $M_2$ is not Ricci- flat. Let $\psi\in\Gamma(\s{r})$ be a nontrivial K\"ahlerian twistor spinor. Then $\psi$ has the following form
\begin{equation}
\psi=\xi_0\otimes\varphi_r+\xi_m\otimes\varphi_{r-m},
\end{equation}
where $\xi_0, \xi_m$ are parallel spinors in $\mathrm{\Sigma}_0 \mathrm{M}_1$, $\mathrm{\Sigma}_m \mathrm{M}_1$, $\varphi_r$ is an anti-holomorphic K\"ahlerian twistor spinor in $\mathrm{\Sigma}_{r} \mathrm{M}_2$ (if $r\leq n$, otherwise $\varphi_r\equiv 0$) and $\varphi_{r-m}$ is a holomorphic K\"ahlerian twistor spinor in $\mathrm{\Sigma}_{r-m} \mathrm{M}_2$ (if $m\leq r$, otherwise $\varphi_{r-m}\equiv 0$). In particular, $M_1$ is a Ricci-flat manifold and $M_2$ carries special K\"ahlerian twistor spinors in $\mathrm{\Sigma}_{r} \mathrm{M}_2$ or $\mathrm{\Sigma}_{r-m} \mathrm{M}_2$.
\end{Theorem}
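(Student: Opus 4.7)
The plan is to decompose $\psi$ along the refined bigrading $\s{r}\cong\oplus_{k}\mathrm{\Sigma}_{k}\mathrm{M}_{1}\otimes\mathrm{\Sigma}_{r-k}\mathrm{M}_{2}$, writing $\psi=\sum_{k}\psi_{k}$, and to decompose $D^{\pm}=D_{1}^{\pm}+D_{2}^{\pm}$ as in the setup preceding the statement. Clifford multiplication by $X_{1}^{\pm}\in TM_{1}^{\mathbb{C}}$ shifts the $M_{1}$-degree of a simple tensor by $\pm 1$ and acts trivially on the $M_{2}$-factor, while $X_{2}^{\pm}$ does the reverse (up to a parity sign on the $M_{1}$-factor coming from the product Clifford rule $X_{2}\cdot(\psi_{1}\otimes\psi_{2})=\bar{\psi_{1}}\otimes X_{2}\cdot\psi_{2}$), and $D_{i}^{\pm}$ shifts the $M_{i}$-degree by $\mp 1$. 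Plugging this into the defining equations \eqref{defec} and projecting onto each bigraded slot produces a coupled system of linear identities among $\nabla^{i}\psi_{k}$, $D_{i}^{\pm}\psi_{k}$ and $D_{i}^{\pm}\psi_{k\pm 1}$.

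Taking $X=X_{1}\in TM_{1}$ in \eqref{defec} and projecting onto the slot $(k,r-k)$ yields, schematically,
$$\nabla^{1}_{X_{1}^{+}}\psi_{k}=-\frac{1}{2(m+n-r+1)}\bigl(X_{1}^{+}\cdot D_{1}^{-}\psi_{k}+\eta_{k}\,X_{1}^{+}\cdot D_{2}^{-}\psi_{k-1}\bigr),$$
with $\eta_{k}=\pm 1$ a parity sign, together with three analogous identities from the remaining sign and factor choices. Rewriting the left-hand side via the Kählerian twistor operator on $M_{1}$, one sees that $\psi_{k}$ (viewed fibrewise as an $\sigm_{2}$-valued spinor on $M_{1}$) satisfies a twistor-type equation with the \emph{wrong} normalization constant $\tfrac{1}{2(m+n-r+1)}$ rather than the $\tfrac{1}{2(m-k+1)}$ that would correspond to $M_{1}$ alone, together with forcing terms coupling $\psi_{k}$ to $\psi_{k\pm 1}$. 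Combining the four projected equations (the two from $X\in TM_{1}$ and the two from $X\in TM_{2}$) and exploiting the mismatch of normalizations, one shows that the coupling terms must vanish and that the remaining homogeneous equations force both $D_{1}^{\pm}\psi_{k}=0$ and $\nabla^{1}\psi_{k}=0$. Fibrewise over $M_{2}$, $\psi_{k}$ is therefore a parallel spinor in $\mathrm{\Sigma}_{k}\mathrm{M}_{1}$.

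Next I would invoke the classical fact that on a spin Kähler manifold non-trivial parallel spinors can exist only in the extremal summands $\mathrm{\Sigma}_{0}\mathrm{M}_{1}$ and $\mathrm{\Sigma}_{m}\mathrm{M}_{1}$ and force the manifold to be Ricci-flat. Hence $\psi_{k}\equiv 0$ unless $k\in\{0,m\}$, and the possibly non-vanishing components write as $\psi_{0}=\xi_{0}\otimes\varphi_{r}$ and $\psi_{m}=\xi_{m}\otimes\varphi_{r-m}$ with $\xi_{0},\xi_{m}$ parallel and $M_{1}$ Ricci-flat (consistent with the hypothesis that $M_{2}$ is not). Substituting this back into \eqref{defec} with $X\in TM_{2}$ and using $\nabla^{1}\xi_{0}=\nabla^{1}\xi_{m}=0$ collapses the equations to Kählerian twistor equations on $M_{2}$: the constants become $\tfrac{1}{2(n-r+1)}$ on the first component and $\tfrac{1}{2(r-m+1)}$ on the second, while the absent Dirac contributions on the $M_{1}$-side force $D_{2}^{-}\varphi_{r}=0$ and $D_{2}^{+}\varphi_{r-m}=0$, making $\varphi_{r}\in\Gamma(\mathrm{\Sigma}_{r}\mathrm{M}_{2})$ anti-holomorphic and $\varphi_{r-m}\in\Gamma(\mathrm{\Sigma}_{r-m}\mathrm{M}_{2})$ holomorphic.

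The main obstacle is the decoupling step: extracting the vanishing of the coupling terms between $\psi_{k}$ and $\psi_{k\pm 1}$ requires careful bookkeeping of the parity signs $\eta_{k}$, and the compatibility between the four projected equations must be exploited simultaneously so that the mismatched normalization constants can be reconciled only by the vanishing of the relevant Dirac pieces. Once this algebraic decoupling is achieved, the factorization of each surviving $\psi_{k}$ into a parallel $M_{1}$-spinor tensored with a special Kählerian twistor $M_{2}$-spinor, together with the Ricci-flatness of $M_{1}$, is essentially forced by the structure of the system.
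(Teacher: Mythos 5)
Your overall strategy coincides with the paper's (decompose $\psi=\sum_k\psi_k$ along $\s{r}\cong\oplus_k\mathrm{\Sigma}_k\mathrm{M}_1\otimes\mathrm{\Sigma}_{r-k}\mathrm{M}_2$, split $D^\pm=D_1^\pm+D_2^\pm$, project the twistor equation, decouple, then factor the surviving components), but the decisive step is missing. Projecting and contracting the twistor equation does \emph{not} yield homogeneous conditions that can be "reconciled only by vanishing": it yields only the proportionalities $(r-k)D_1^+\psi_k=(k+1)D_2^+\psi_{k+1}$ and $(n+k-r)D_1^-\psi_k=(m-k+1)D_2^-\psi_{k-1}$, which pointwise are perfectly consistent with nonzero Dirac pieces. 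The mismatch of normalization constants alone proves nothing; in the paper the vanishing $D_1^+\psi_k=D_2^-\psi_k=0$ (and dually $D_1^-\psi_k=D_2^+\psi_k=0$) is obtained \emph{globally}: combining the two proportionalities with the anticommutation $D_1^-D_2^+=-D_2^+D_1^-$ gives $D_1^-D_1^+\psi_k=-c\,D_2^+D_2^-\psi_k$ with $c>0$, and integrating over the compact manifold $M$, using that $D_i^-$ is the formal adjoint of $D_i^+$, gives $||D_1^+\psi_k||^2=-c\,||D_2^-\psi_k||^2$, forcing both to vanish. Your proposal never uses compactness or an $L^2$ argument at this point, and without it the "algebraic decoupling" you invoke simply does not exist (a purely pointwise decoupling would prove a local statement that is not what the hypotheses support).

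A second genuine problem is the fact you invoke to eliminate the intermediate components: it is \emph{not} true that on a spin K\"ahler manifold parallel spinors can live only in the extremal summands $\mathrm{\Sigma}_0$ and $\mathrm{\Sigma}_m$. For reducible Ricci-flat manifolds this fails, e.g. on $T^2\times K3$ there are parallel spinors in $\mathrm{\Sigma}_1$ and $\mathrm{\Sigma}_2$ as well; and nothing at this stage prevents $M_1$ from being such a product. The paper's mechanism is different: after the decoupling, each middle component $\psi_k$ ($1\leq k\leq r-1$) is annihilated by all the relevant operators and the projected equations then show it is parallel in \emph{all} directions of $M=M_1\times M_2$; a nontrivial parallel spinor would force $M$, hence $M_2$, to be Ricci-flat, contradicting the hypothesis. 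So the middle components die because $M_2$ is not Ricci-flat, not because of any constraint on which $\mathrm{\Sigma}_k\mathrm{M}_1$ can carry parallel spinors. (A smaller slip: in the last step the coefficients that matter are $\tfrac{1}{2(r+1)}$ for the anti-holomorphic equation of $\varphi_r$ and $\tfrac{1}{2(m+n-r+1)}=\tfrac{1}{2(n-(r-m)+1)}$ for the holomorphic equation of $\varphi_{r-m}$, which match the intrinsic constants on $M_2$ exactly; the constants you quote belong to the equations whose right-hand sides vanish.) As written, then, the proof has a gap at its core step and relies on a false auxiliary statement, even though the skeleton of the argument is the right one.
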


\begin{proof}
Let $\psi$ be a K\"ahlerian twistor spinor in $\s{r}$:
\begin{equation}\label{twist}
\begin{cases}
\nabla_{X^+}\psi=-\frac{1}{2(m+n-r+1)}X^+\cdot D^-\psi,\\
\nabla_{X^-}\psi=-\frac{1}{2(r+1)}X^-\cdot D^+\psi,
\end{cases}
\end{equation}
for any vector field $X$ tangent to $M$. With respect to the decomposition \eqref{dec2}, $\psi$ is written as
\[\psi=\psi_0+\cdots+\psi_r,\]
with $\psi_k\in\Gamma(\mathrm{\Sigma}_k \mathrm{M}_1\otimes\mathrm{\Sigma}_{r-k} \mathrm{M}_2)$, for $k=0,\ldots,r$.

Projecting onto the components given by \eqref{dec2}, the twistorial equation \eqref{twist} is equi\-va\-lent to the following two systems of equations:

For $X\in\Gamma(\mathrm{TM}_1)$:
\begin{equation}\label{twist1}
\begin{cases}
\nabla_{X^+}\psi_k=-\frac{1}{2(m+n-r+1)}X^+\cdot (D_1^-\psi_k+D_2^-\psi_{k-1}),\\
\nabla_{X^-}\psi_k=-\frac{1}{2(r+1)}X^-\cdot (D_1^+\psi_k+D_2^+\psi_{k+1})
\end{cases}
\end{equation}
and for $X\in\Gamma(\mathrm{TM}_2)$:
\begin{equation}\label{twist2}
\begin{cases}
\nabla_{X^+}\psi_k=-\frac{1}{2(m+n-r+1)}X^+\cdot (D_1^-\psi_{k+1}+D_2^-\psi_{k}),\\
\nabla_{X^-}\psi_k=-\frac{1}{2(r+1)}X^-\cdot (D_1^+\psi_{k-1}+D_2^+\psi_{k}).
\end{cases}
\end{equation}

If $\{e_i\}_{i=1,\ldots,2m}$ is an orthonormal basis of the $2m$-dimensional manifold $M_1$, then we have on $\mathrm{\Sigma}_r \mathrm{M}_1$:
\[e_i^+\cdot e_i^-=-2r, \quad e_i^-\cdot e_i^+=-2(m-r).\]

By contracting \eqref{twist1} using the relations above, it follows that
\begin{equation*}
D_1^-\psi_k=e_i^-\cdot\nabla_{e_i^+}\psi_k=\frac{m-k+1}{m+n-r+1}(D_1^-\psi_k+D_2^-\psi_{k-1}),
\end{equation*}
\begin{equation*}
D_1^+\psi_k=e_i^+\cdot\nabla_{e_i^-}\psi_k=\frac{k+1}{r+1}(D_1^+\psi_k+D_2^+\psi_{k+1}),
\end{equation*}

so that we get
\begin{equation}\label{relcoef1}
(r-k)D_1^+\psi_k=(k+1)D_2^+\psi_{k+1}
\end{equation}

and
\begin{equation}\label{relcoef2}
(n+k-r)D_1^-\psi_k=(m-k+1)D_2^-\psi_{k-1}.
\end{equation}

We distinguish three cases for $0\leq r\leq m+n$:

\textbf{I.} Suppose that $r$ is strictly smaller than $m$ and $n$. For $k<r$,  \eqref{relcoef1} and \eqref{relcoef2} imply:
\begin{equation}\label{d1d2}
D_1^-D_1^+\psi_k=\frac{k+1}{r-k}D_1^-D_2^+\psi_{k+1}=-\frac{k+1}{r-k}D_2^+D_1^-\psi_{k+1}=-\frac{(k+1)(m-k)}{(r-k)(n+k-r+1)}D_2^+D_2^-\psi_k,
\end{equation}
which integrated over $M$ yields $D_1^+\psi_k=D_2^-\psi_k=0$, $\forall k<r$. Similarly it follows that $D_1^-\psi_k=D_2^+\psi_k=0$, $\forall k>0$. As $D_1^-\psi_0=D_2^-\psi_r=0$ holds automatically, then \eqref{twist1} and \eqref{twist2} show that $\psi_k$ are parallel spinors on $M$ (and thus are zero, since $M$ is not Ricci-flat) for $1\leq k\leq r-1$. The first component $\psi_0\in\Gamma(\mathrm{\Sigma}_0 \mathrm{M}_1\otimes\mathrm{\Sigma}_r \mathrm{M}_2)$ satisfies the equations:
\[\nabla_X\psi_0=0, \quad \text{for all } X\in \Gamma(\mathrm{TM}_1),\]
\[\nabla_{X^+}\psi_0=0, \quad \nabla_{X^-}\psi_0=-\frac{1}{r+1}X^-\cdot D_2^+\psi_0, \quad \text{for all } X\in \Gamma(\mathrm{TM}_2)\]
and $\psi_r\in\Gamma(\mathrm{\Sigma}_r \mathrm{M}_1\otimes\mathrm{\Sigma}_0 \mathrm{M}_2)$ satisfies the equations:

\[\nabla_{X^+}\psi_r=0, \quad \nabla_{X^-}\psi_r=-\frac{1}{r+1}X^-\cdot D_2^+\psi_r, \quad \text{for all } X\in \Gamma(\mathrm{TM}_1),\]
\[\nabla_X\psi_r=0, \quad \text{for all } X\in \Gamma(\mathrm{TM}_2)\]

Thus $\psi_0=\xi_0\otimes\varphi_r$ with $\xi_0\in\Gamma(\mathrm{\Sigma}_0 \mathrm{M}_1)$ a parallel spinor on $M_1$ and $\varphi_r\in\Gamma(\mathrm{\Sigma}_r \mathrm{M}_2)$ an anti-holomorphic K\"ahlerian twistor spinor on $M_2$ ($D^-\varphi_r=0$). Similarly $\psi_r=\xi_r\otimes\varphi_0$, in particular with $\varphi_0\in\Gamma(\mathrm{\Sigma}_0 \mathrm{M}_2)$ a parallel spinor on $M_2$, but as $M_2$ is not Ricci-flat, this term must vanish.

\textbf{II.} If $r$ is strictly larger than $m$ and $n$, then by applying the real or quaternionic structure $\mathfrak{j}$ to a K\"ahlerian twistor spinor in $\s{r}$ we get one in $\s{m+n-r}$, thus reducing to the first case. It then follows that a K\"ahlerian twistor spinor $\psi\in\Gamma(\s{r})$ is of the form $\psi=\xi_m\otimes\varphi_{r-m}$ with $\xi_m\in\Gamma(\mathrm{\Sigma}_m \mathrm{M}_1)$ a parallel spinor on $M_1$ and $\varphi_{r-m}\in\Gamma(\mathrm{\Sigma}_{r-m} \mathrm{M}_2)$ a holomorphic K\"ahlerian twistor spinor on $M_2$.

\textbf{III.} Let $r$ be a number between $m$ and $n$ and suppose that $m\leq r\leq n$. Since $\mathrm{\Sigma}_k \mathrm{M}_1$ exist only for $0\leq k\leq m$, then automatically $\psi_{m+1}=\cdots=\psi_{r}=0$. Integrating \eqref{d1d2} over $M$ we get as above $D_1^+\psi_k=D_2^-\psi_k=0$, $\forall k\leq m-1$ and $D_1^-\psi_k=D_2^+\psi_k=0$, $\forall k\geq 1$. From \eqref{twist1} and \eqref{twist2} it follows that $\psi_1, \ldots, \psi_{m-1}$ are parallel spinors in $\sigm$ and thus must vanish. The first component $\psi_0$ has as before the form $\psi_0=\xi_0\otimes\varphi_r$ with $\xi_0\in\Gamma(\mathrm{\Sigma}_0 \mathrm{M}_1)$ a parallel spinor on $M_1$ and $\varphi_r\in\Gamma(\mathrm{\Sigma}_r \mathrm{M}_2)$ an anti-holomorphic K\"ahlerian twistor spinor on $M_2$ and the last component is of the form $\psi_m=\xi_m\otimes\varphi_{r-m}$ with $\xi_m\in\Gamma(\mathrm{\Sigma}_m \mathrm{M}_1)$ a parallel spinor on $M_1$ and $\varphi_{r-m}\in\Gamma(\mathrm{\Sigma}_{r-m} \mathrm{M}_2)$ a holomorphic K\"ahlerian twistor spinor on $M_2$. 

The last possible case is when $n\leq r\leq m$. The same argument as above holds with $M_1$ and $M_2$ interchanged. As $M_2$ is assumed not to be Ricci-flat, then it carries no parallel spinors, showing that there are no nontrivial K\"ahlerian twistor spinors in this case.\qed
\end{proof}

\begin{Remark}
\normalfont{
From Theorem~\ref{product} it follows in particular that on a product of two compact spin K\"ahler manifolds any K\"ahlerian twistor spinor is a special K\"ahlerian twistor spinor. 
Moreover, since one of the factors must be a Ricci-flat manifold, it follows that the second factor, which in turn carries special K\"ahlerian twistor spinors, is an irreducible K\"ahler manifold with holonomy $U(m)$ (from Berger's list, where we eliminate the case of symmetric manifolds, which are in particular K\"ahler-Einstein and thus studied in Theorem~\ref{classke}).

If $n<m$, where $m$ is the complex dimension of the Ricci-flat factor $M_1$ and $n$ the complex dimension of the other factor $M_2$, then Theorem \ref{product} implies that there are no nontrivial K\"ahlerian twistor spinors in $\s{r}$ for $n<r<m$.}
\end{Remark}

\subsection{The Geometric Description}
In this section we give the main result, which is now a consequence of Theorems~\ref{2eigenval} and ~\ref{product} and the following splitting result proven by V.~Apostolov, T.~Dr\u aghici and A.~Moroianu \cite{adm}:

\begin{Theorem}\label{split}
Let $(M,g,J)$ be a compact K\"ahler manifold whose Ricci tensor has two distinct constant non-negative eigenvalues $\lambda$ and $\mu$. Then the universal cover of $(M,g,J)$ is the product of two simply connected K\"ahler-Einstein manifolds with Einstein constants $\lambda$ and $\mu$, respectively.
\end{Theorem}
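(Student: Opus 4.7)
The natural approach is to decompose $TM$ into the $J$-invariant eigendistributions of the Ricci tensor, show these distributions are parallel, and then invoke the de Rham decomposition theorem applied to the universal cover.

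First I would set up the eigendecomposition. Since $(M,g,J)$ is K\"ahler, $\ric$ commutes with $J$, so the eigendistributions $E_\lambda$ and $E_\mu$ corresponding to the two distinct constant eigenvalues are $J$-invariant and mutually orthogonal, giving $TM = E_\lambda \oplus E_\mu$. Let $P_\lambda,P_\mu$ denote the corresponding orthogonal projections; they are parallel if and only if $\ric$ is parallel, since $\ric = \lambda P_\lambda + \mu P_\mu$ with $\lambda\ne\mu$ constant. Define the partial (1,1)-forms $\omega_\lambda = g(JP_\lambda\cdot,\cdot)$ and $\omega_\mu = g(JP_\mu\cdot,\cdot)$; they satisfy $\omega_\lambda + \omega_\mu = \Omega$ and $\rho = \lambda\omega_\lambda + \mu\omega_\mu = (\lambda-\mu)\omega_\lambda + \mu\Omega$.

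Next I would show $\omega_\lambda$ is harmonic. The Ricci form $\rho$ is always closed on a K\"ahler manifold, and because the multiplicities of $\lambda,\mu$ are locally constant, the scalar curvature $S = m_\lambda\lambda + m_\mu\mu$ is constant, so $\delta\rho = -\tfrac12 J\,dS = 0$. Combined with the parallelness of $\Omega$, the identity $\omega_\lambda = (\lambda-\mu)^{-1}(\rho - \mu\Omega)$ gives $d\omega_\lambda = 0$ and $\delta\omega_\lambda = 0$.

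The crucial step is to upgrade harmonicity of $\omega_\lambda$ to parallelness. The key extra input is that $\omega_\lambda$ has a very rigid pointwise structure: as a skew-symmetric endomorphism $\omega_\lambda^\sharp = JP_\lambda$ satisfies $(\omega_\lambda^\sharp)^2 = -P_\lambda$, so its pointwise spectrum is $\{+i,-i,0\}$ with constant multiplicities, and in particular $|\omega_\lambda|^2 = m_\lambda$ is constant. I would feed this into the Weitzenb\"ock formula for harmonic 2-forms on K\"ahler manifolds, $0 = \Delta\omega_\lambda = \nabla^*\nabla\omega_\lambda + \mathcal{R}(\omega_\lambda)$; pairing with $\omega_\lambda$ and integrating over $M$, the curvature term $\langle \mathcal{R}(\omega_\lambda),\omega_\lambda\rangle$ can be re-expressed, via the identity $\rho = (\lambda-\mu)\omega_\lambda + \mu\Omega$ and the specific form of the K\"ahler curvature operator acting on (1,1)-forms, in terms of $\ric$ and $\omega_\lambda$ itself. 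Because the spectrum of $\omega_\lambda^\sharp$ is pointwise constant, this curvature term vanishes identically, forcing $\int_M|\nabla\omega_\lambda|^2 = 0$ and hence $\nabla\omega_\lambda = 0$. Therefore $\nabla P_\lambda = 0$, i.e.\ both distributions are parallel. The de Rham decomposition theorem then splits the simply connected cover as a Riemannian product $\tilde M = M_1\times M_2$ with $TM_i$ given by the lifts of $E_\lambda$ and $E_\mu$; since the distributions are $J$-invariant and $\nabla$ preserves $J$, this is a K\"ahler product, and the restriction of $\ric$ to each factor is a constant multiple ($\lambda$ or $\mu$) of the identity, giving the asserted K\"ahler--Einstein structure.

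The main obstacle I anticipate is precisely this last integration-by-parts argument: on a generic compact K\"ahler manifold harmonic (1,1)-forms are not parallel, so the proof has to use in a nontrivial way that $\omega_\lambda$ encodes an \emph{orthogonal projection}, not just any closed form. Making the vanishing of the curvature term rigorous likely requires either an ad hoc algebraic identity for the K\"ahler curvature operator acting on idempotent (1,1)-forms, or replacing this step by a direct computation using the second Bianchi identity $(\nabla_X\ric)(Y,Z) + \text{cyclic} = 0$ (applied to $X\in E_\lambda$, $Y\in E_\mu$) to show that mixed components of $\nabla P_\lambda$ vanish, with the constancy of the eigenvalues handling the pure components.
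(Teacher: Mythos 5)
This theorem is not proved in the paper at all: it is quoted from Apostolov--Dr\u aghici--Moroianu \cite{adm}, where it is the main result, so the relevant comparison is with their proof. Your reductions are the standard ones and are correct as far as they go: $\ric$ commutes with $J$, the form $\o_\lambda=(\lambda-\mu)^{-1}(\rho-\mu\Omega)$ is closed and (since $S$ is forced to be constant) coclosed, and once $\nabla\ric=0$ is established the de~Rham theorem gives the asserted splitting. The genuine gap is the central step. In the Weitzenb\"ock formula $0=\Delta\o_\lambda=\nabla^*\nabla\o_\lambda+q(R)(\o_\lambda)$ the curvature term for a $2$-form involves the full curvature tensor acting on $(1,1)$-forms, and on a K\"ahler manifold this action is \emph{not} determined by the Ricci tensor: the Bochner component of the curvature survives, so $\langle q(R)\o_\lambda,\o_\lambda\rangle$ cannot be ``re-expressed in terms of $\ric$ and $\o_\lambda$''. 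The only universal identity of that type is the one for the K\"ahler form itself, where the curvature operator sends $\Omega$ to the Ricci form. The fact that $\o_\lambda^\sharp$ has pointwise spectrum $\{i,-i,0\}$ with constant multiplicities only tells you that $|\o_\lambda|$ is constant; it gives no information whatsoever about the curvature term, and harmonic forms of constant norm are in general not parallel. No argument is offered for why $\int_M\langle q(R)\o_\lambda,\o_\lambda\rangle$ should vanish, or even have a sign, and this is precisely where the whole difficulty of the theorem sits.

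A structural red flag confirms that this step cannot be patched cheaply: your argument never uses the hypothesis $\lambda,\mu\ge 0$, whereas in \cite{adm} the non-negativity of the eigenvalues, together with compactness, enters the proof in an essential way. A pointwise Weitzenb\"ock identity of the kind you postulate would prove the splitting locally and without any sign restriction, which is known to fail (cf.\ the examples and discussion in \cite{adm}); the same objection applies to the proposed fallback via the ``second Bianchi identity''. Indeed, the identity $(\nabla_X\ric)(Y,Z)+\mathrm{cyclic}=0$ is not correct as stated --- what holds is the cyclic identity for the Ricci \emph{form}, i.e.\ $d\rho=0$ --- and that information is exactly what you have already exhausted in concluding $d\o_\lambda=0$. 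Closedness of $\o_\lambda$ yields the integrability of the two eigendistributions, which is the easy, purely local part; the passage from ``closed with constant eigenvalues'' to ``parallel'' is the actual content of \cite{adm} and is obtained there by a global argument on the compact manifold in which the sign of the eigenvalues is used essentially. As it stands, the proposal reproduces the easy reductions but leaves the theorem unproved.
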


\begin{Theorem}\label{class}
Let $(M^{2m},g,J)$ be a compact simply connected spin K\"ahler manifold of constant scalar curvature admitting nontrivial non-parallel K\"ahlerian twistor spinors in $\s{r}$ for an $r$ with $0<r<m$. Then $M$ is the product of a Ricci-flat manifold $M_1$ and an irreducible K\"ahler-Einstein manifold $M_2$, which must be one of the manifolds described in Theorem~\ref{classke}. More precisely, there exist anti-holomorphic (holomorphic) K\"ahlerian twistor spinors in at most one such $\s{r}$ with $r<\frac{m}{2}$ ($r>\frac{m}{2}$) and they are of the form:
\begin{equation}\label{tenstwist}
\psi=\xi_0\otimes\varphi_r \quad (\psi=\xi_{2r-m-1}\otimes\varphi_{m-r+1}),
\end{equation}
where $\xi_0\in\Gamma(\mathrm{\Sigma}_0 \mathrm{M}_1)$ ($\xi_{2r-m-1}\in\Gamma(\mathrm{\Sigma}_{2r-m-1} \mathrm{M}_1)$) is a parallel spinor and  $\varphi_r\in\Gamma(\mathrm{\Sigma}_{r} \mathrm{M}_2)$ ($\varphi_{m-r+1}\in\Gamma(\mathrm{\Sigma}_{m-r+1} \mathrm{M}_2)$) is an anti-holomorphic (holomorphic) K\"ahlerian twistor spinor. In particular, the complex dimension of the K\"ahler-Einstein manifold $M_2$ is $2r+1$ (resp. $2(m-r)+1$).
\end{Theorem}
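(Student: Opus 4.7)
The plan is to orchestrate the four structural results proved in the previous subsections, in exactly the order announced at the opening of \S\ref{classif}. Fix a nontrivial non-parallel K\"ahlerian twistor spinor $\psi\in\Gamma(\s{r})$ with $0<r<m$. Using the isomorphism $\mathfrak{j}\colon\akts(r)\to\hkts(m-r)$ of \eqref{strj}, it suffices to handle the anti-holomorphic case $r<m/2$ (the holomorphic case is obtained by applying $\mathfrak{j}$). Since $S$ is constant and $\psi$ is non-parallel, Remark~\ref{pos} rules out $S\leq 0$, so $S>0$; Proposition~\ref{allsemi} then upgrades $\psi$ to an anti-holomorphic K\"ahlerian twistor spinor ($\psi\in\akts(r)$, i.e.\ $D^{-}\psi=0$).

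With $\psi\in\akts(r)$, Theorem~\ref{2eigenval} forces $\ric$ to have exactly the two constant non-negative eigenvalues $S/(2(2r+1))$ and $0$, with real multiplicities $2(2r+1)$ and $2(m-2r-1)$ respectively; both are positive since $0<r<m/2$, so these eigenvalues are genuinely distinct. The hypotheses of the Apostolov--Dr\u aghici--Moroianu splitting result (Theorem~\ref{split}) are therefore satisfied, and using that $M$ is simply connected we obtain $M=M_1\times M_2$ as a Riemannian (hence holomorphic) product, with $M_1$ a simply connected compact Ricci-flat K\"ahler manifold of complex dimension $m_1=m-2r-1$ and $M_2$ a simply connected compact K\"ahler--Einstein manifold with positive Einstein constant and complex dimension $m_2=2r+1$.

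Theorem~\ref{product} now applies ($M_2$ is not Ricci-flat) and writes $\psi=\xi_0\otimes\varphi_r+\xi_{m_1}\otimes\varphi_{r-m_1}$, where $\xi_0,\xi_{m_1}$ are parallel spinors on $M_1$, $\varphi_r$ is an anti-holomorphic K\"ahlerian twistor spinor on $M_2$ and $\varphi_{r-m_1}$ is a holomorphic one. Since the spinors on $M_1$ are parallel, $D^{-}=D_1^{-}+D_2^{-}$ reduces to $D_2^{-}$ on both summands, so $D^{-}\psi=0$ reads $\overline{\xi_{m_1}}\otimes D_2^{-}\varphi_{r-m_1}=0$ (the first summand vanishes automatically as $D_2^{-}\varphi_r=0$). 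If $\xi_{m_1}\neq 0$, this forces $D_2^{-}\varphi_{r-m_1}=0$, and combined with $D_2^{+}\varphi_{r-m_1}=0$ it makes $\varphi_{r-m_1}$ a $D_2$-harmonic spinor on the compact positive scalar curvature manifold $M_2$; by Lichnerowicz \eqref{lichn} it must then vanish. Therefore $\psi=\xi_0\otimes\varphi_r$ in the asserted form.

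Finally, $\varphi_r$ is a nontrivial non-parallel K\"ahlerian twistor spinor on the compact K\"ahler--Einstein manifold $M_2$, and it is non-extremal since $0<r<m_2$, so Proposition~\ref{ke} identifies $\varphi_r$ as a K\"ahlerian Killing spinor in the middle index $(m_2-1)/2=r$; in particular $m_2=2r+1$ is odd, and Theorem~\ref{classke} classifies $M_2$ as either $\mathbb{C}P^{4k+1}$ (when $m_2=4k+1$) or a twistor space over a positive quaternionic K\"ahler manifold (when $m_2=4k+3$). The rigidity $\dim_{\mathbb{C}}M_2=2r+1$ also forces at most one value of $r<m/2$ to admit anti-holomorphic K\"ahlerian twistor spinors on a given $M$. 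Beyond routine bookkeeping from the quoted theorems, the one delicate step is the suppression of the mixed term $\xi_{m_1}\otimes\varphi_{r-m_1}$, for which the short Lichnerowicz vanishing argument above is essential.
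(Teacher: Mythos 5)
Your argument follows the paper's own proof essentially verbatim: Remark~\ref{pos} and Proposition~\ref{allsemi} to make the spinor special, Theorem~\ref{2eigenval} for the two constant eigenvalues of $\ric$, the Apostolov--Dr\u aghici--Moroianu splitting (Theorem~\ref{split}), then Theorem~\ref{product} for the tensor-product form, and finally Proposition~\ref{ke}/Theorem~\ref{classke} on the K\"ahler--Einstein factor. The only additions — the explicit Lichnerowicz vanishing of the second summand $\xi_{m_1}\otimes\varphi_{r-m_1}$ and the dimension bookkeeping — are harmless elaborations of steps the paper leaves implicit, so the proposal is correct and takes the same route.
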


\begin{proof}
Let $(M,g,J)$ be a K\"ahler manifold as in the hypothesis of the theorem and $\varphi\in\Gamma(\s{r})$ a K\"ahlerian twistor spinor. By Proposition~\ref{allsemi} and Remark~\ref{pos}, $\varphi$ is a special K\"ahlerian twistor spinor and, as usually, we may suppose that it is an anti-holomorphic K\"ahlerian twistor spinor. Then, by Theorem~\ref{2eigenval} the Ricci tensor has two constant non-negative eigenvalues: $\frac{S}{2(2r+1)}$ with multiplicity $2(2r+1)$ and $0$ with multiplicity $2(m-2r-1)$. From Theorem~\ref{split}, as $M$ is supposed to be simply connected, it follows that $M$ is the product of a Ricci-flat manifold $M_1$ and a K\"ahler-Einstein manifold $M_2$ of scalar curvature equal to $\frac{S}{2(2r+1)}$. By Theorem~\ref{product}, $\psi$ is of the form \eqref{tenstwist} with $\xi_0$ is a parallel spinor in $\Sigma_0 M_1$ and $\varphi_r$ is an anti-holomorphic K\"ahlerian twistor spinor in $\Sigma_{r} M_2$. We then conclude by applying Theorem~\ref{classke}. We notice that this result together with Corollary \ref{cordim1} also provides the dimension of the space of K\"ahlerian twistor spinors. \qed
\end{proof}

\begin{Remark}
\normalfont{
This result can be seen as a generalization of the geometric  description of limiting even dimensional K\"ahler manifolds for Kirchberg's inequality \eqref{kircheven} using the characterization in Theorem~\ref{characteven}. Thus, if $M$ is a limiting K\"ahler manifold of even complex dimension $m=2l$, then it admits an anti-holomorphic K\"ahlerian twistor spinor in $\s{l-1}$ (or equivalently a holomorphic K\"ahlerian twistor spinor in $\s{l+1}$). By Theorem~\ref{class}, $M$ is then the product of a $2$-dimensional flat manifold $M_1$ and a $(4l-2)$-dimensional K\"ahler-Einstein manifold $M_2$, which is a limiting manifold for Kirchberg's inequality \eqref{kirchodd} in odd dimensions.}
\end{Remark}

\begin{Remark}
\normalfont{If in Theorem \ref{class} the manifold $(M,g)$ is not assumed to be simply connected, then its universal Riemannian cover $(\tilde{M},\tilde{g})$ carries a unique spin structure and since its Ricci curvature is non-negative (as proven in Theorem~\ref{2eigenval}), it follows by a result of J. Cheeger and D. Gromoll (\cite[Theorem 6.65]{besse}) that $(\tilde{M},\tilde{g})$ is isometric to a Riemannian product $(\bar{M}\times\mathbb{R}^q,\bar{g}\times g_0)$, where $g_0$ is the canonical flat metric on $\mathbb{R}^q$ and $(\bar{M},\bar{g})$ is a compact simply connected manifold with positive Ricci curvature. In order to complete the classification of K\"ahler spin manifolds admitting nontrivial non-extremal K\"ahlerian twistor spinors, one has to analyze the existence of such spinors on the product $(\bar{M}\times\mathbb{R}^q,\bar{g}\times g_0)$ and the action of the fundamental group of $M$ on $\tilde{M}$. In the special case of limiting manifolds for the even dimensional Kirchberg inequality, this classification has been obtained by A. Moroianu, \emph{cf.} Theorem \ref{classeven}.}
\end{Remark}

In particular, Theorem~\ref{class} together with Proposition~\ref{eigentyper} answer a question raised by \mbox{K.-D.~Kirchberg} \cite{kirch2} about the description of all compact K\"ahler spin manifolds, whose square of the Dirac operator has the smallest eigenvalue of type $r$.

\subsection{Weakly Bochner Flat Manifolds}\label{sectwbf}

All the examples that we know of K\"ahler spin manifolds admitting special K\"ahlerian twistor spinors have parallel Ricci form, being thus in particular weakly Bochner flat. The purpose of this section is to show conversely, that any spin weakly Bochner flat manifold admitting special K\"ahlerian twistor spinors must have constant scalar curvature and thus, is described in Theorem~\ref{class}.

We first recall that a K\"ahler manifold $(M,g,J)$ is called {\it weakly Bochner flat} if its Bochner tensor (which is defined as the projection of the Weyl tensor onto the space of K\"ahlerian curvature tensors) is co-closed. In \cite{acg} (cf. Proposition~1), the codifferential of the Bochner tensor is computed using the Matsushima identity and it is proven that a K\"ahler manifold is weakly Bochner flat if and only if the normalized Ricci form defined by
\begin{equation}\label{rhonorm}
\tilde\rho:=\rho-\frac{1}{2(m+1)}S\Omega
\end{equation}
is a Hamiltonian $2$-form, \emph{i.e.} it satisfies the following equation 
\begin{equation}\label{wbf}
\nabla_{X}\tilde\rho=\frac{1}{4(m+1)}(dS\wedge JX-d^cS\wedge X),
\end{equation}
for all vector fields $X$.

\begin{Proposition}
Let $(M,g,J)$ be a spin weakly Bochner flat manifold and $\varphi\in\Gamma(\s{r})$ (with $0<r<m$) be a nontrivial anti-holomorphic (or holomorphic) K\"ahlerian twistor spinor. Then the scalar curvature $S$ of the metric $g$ is constant.
\end{Proposition}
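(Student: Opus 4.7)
The plan is to combine the weakly Bochner flat identity \eqref{wbf} with the identities from Proposition~\ref{formtwists} to force $dS = 0$. By applying the quaternionic/real structure $\mathfrak{j}$ (which interchanges anti-holomorphic and holomorphic K\"ahlerian twistor spinors, see \eqref{strj}), it is enough to treat an anti-holomorphic K\"ahlerian twistor spinor $\varphi \in \Gamma(\s{r})$.

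First I would rewrite \eqref{wbf} as
\[\nabla_X \rho = \frac{X(S)}{2(m+1)}\Omega + \frac{1}{4(m+1)}\bigl(dS \wedge JX - d^c S \wedge X\bigr),\]
since $\Omega$ is parallel. Specialising $X = X^+$ and using $JX^+ = iX^+$ together with the type decomposition $d^c S = i(dS)^+ - i(dS)^-$, the combination in parentheses collapses, after a short computation, to $2i\,(dS)^- \wedge X^+$. Hence
\[\nabla_{X^+}\rho = \frac{X^+(S)}{2(m+1)}\Omega + \frac{i}{2(m+1)}(dS)^- \wedge X^+.\]

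Next I would Clifford-multiply by $i$ and act on $\varphi$. Using $\Omega \cdot \varphi = i(2r - m)\varphi$, the first term gives $\frac{m-2r}{2(m+1)}X^+(S)\,\varphi$. For the second term, the embedding of $\Lambda^2$ into the Clifford algebra gives $(dS)^- \wedge X^+ = (dS)^- \cdot X^+ + g((dS)^-, X^+)$, and combined with the Clifford anticommutation $(dS)^- \cdot X^+ + X^+ \cdot (dS)^- = -2g((dS)^-,X^+)$ together with the identity $(dS)^- \cdot \varphi = 0$ from \eqref{ds-}, one obtains $(dS)^- \wedge X^+ \cdot \varphi = -g((dS)^-, X^+)\,\varphi$. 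A short computation with the complexified metric shows $g((dS)^-, X^+) = X^+(S)$, so altogether
\[i\,\nabla_{X^+}\rho \cdot \varphi = \frac{m - 2r + 1}{2(m+1)}\,X^+(S)\,\varphi.\]

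Comparing this with the identity \eqref{lnab-rho}, which reads $i\,\nabla_{X^+}\rho \cdot \varphi = \tfrac{1}{2(2r+1)}X^+(S)\,\varphi$, and subtracting, yields
\[\frac{r(m - 2r)}{(m+1)(2r+1)}\,X^+(S)\,\varphi = 0.\]
Since $\varphi \neq 0$ and $0 < r < m$, for $r \neq m/2$ one concludes $X^+(S) = 0$ for every $X$. Splitting into real and imaginary parts of $X^+(S) = \tfrac{1}{2}(X(S) - i(JX)(S))$ then forces $X(S) = 0$ for all $X$, i.e.\ $dS = 0$. The middle-dimensional case $r = m/2$ is the main obstacle, since the above coefficient vanishes identically; there one would argue separately, running the parallel computation in the $X^-$ direction starting from \eqref{phi-} (after eliminating the $\mathrm{Ric}(Y^-)\cdot\varphi^+$ term via \eqref{lric-}) to obtain an independent relation involving $Y^-(S)$ and $Y^- \cdot (dS)^+ \cdot \varphi$, which together with the $X^+$ analysis again forces $dS = 0$.
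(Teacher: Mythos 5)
Your argument is essentially identical to the paper's proof: it projects the weakly Bochner flat equation \eqref{wbf} onto the $(1,0)$-direction (equivalently uses \eqref{rhonorm} and \eqref{x+}), evaluates $i\nabla_{X^+}\rho$ on $\varphi$ using $(dS)^-\cdot\varphi=0$ from \eqref{ds-} and $\Omega\cdot\varphi=i(2r-m)\varphi$, and compares the resulting coefficient $\frac{m-2r+1}{2(m+1)}$ with the coefficient $\frac{1}{2(2r+1)}$ from \eqref{lnab-rho} to force $X^+(S)=0$, hence $dS=0$, for $r\neq\frac{m}{2}$; this computation is correct and matches the paper line by line. The only divergence is your sketched treatment of $r=\frac{m}{2}$: the paper's proof simply assumes $r\neq\frac{m}{2}$ at the outset, and your fallback is not justified as written, since the identities you invoke there (\eqref{phi-}, \eqref{lric-}, and indeed \eqref{ds-}, \eqref{lnab-rho} themselves) are all derived from the K\"ahlerian twistor connection and its curvature, whose construction explicitly requires $r\neq\frac{m}{2}$ (the inversion producing \eqref{d+d-d2}--\eqref{d-d+} divides by $m-2r$); so that case would need a separate argument, but it lies outside what the paper itself establishes.
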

\begin{proof}
Let $\varphi\in\Gamma(\s{r})$ (with $0<r<m$, $r\neq\frac{m}{2}$) be a nontrivial anti-holomorphic K\"ahlerian twistor spinor (using the isomorphism $\mathfrak{j}$ the same argument holds for a holomorphic K\"ahlerian twistor spinor). First we notice that using the projections onto $T^{(1,0)}M$ and $T^{(0,1)}M$, the equation \eqref{wbf} is equivalent to the following equations:
\begin{equation}\label{x+}
i\nabla_{X^+}\tilde\rho=\frac{1}{2(m+1)}X^+\wedge(dS)^-,
\end{equation}
\begin{equation}\label{x-}
i\nabla_{X^-}\tilde\rho=-\frac{1}{2(m+1)}X^-\wedge(dS)^+.
\end{equation}
From \eqref{rhonorm} and \eqref{x+} we obtain:
\begin{equation*}
i\nabla_{X^+}\rho=i\nabla_{X^+}\tilde\rho+\frac{1}{2(m+1)}iX^+(S)\Omega=\frac{1}{2(m+1)}[X^+\wedge(dS)^-+iX^+(S)\Omega].
\end{equation*}
Applying this equation to $\varphi$ and using \eqref{ds-} we get
\begin{equation}\label{lnab-rho1}
i\nabla_{X^+}\rho\cdot\varphi=\frac{1}{2(m+1)}(X^+\wedge(dS)^-)\cdot\varphi+\frac{1}{2(m+1)}iX^+(S)\Omega\cdot\varphi=\frac{m-2r+1}{2(m+1)}X^+(S)\varphi.
\end{equation}
On the other hand, by \eqref{lnab-rho} we have
\begin{equation}\label{lnab-rho2}
i\nabla_{X^+}\rho\cdot\varphi=\frac{1}{2(2r+1)}X^+(S)\varphi.
\end{equation}
Comparing the equations \eqref{lnab-rho1} and \eqref{lnab-rho2} we get
\[\frac{r(m-2r)}{2(m+1)(2r+1)}X^+(S)\varphi=0.\]
As $r\neq \frac{m}{2}$ and $r\neq 0$, it follows that $X^+(S)=0$ at all points where $\varphi$ does not vanish, thus proving that $S$ must be constant.\qed
\end{proof}

\end{document}